\newtheorem{theorem}{Theorem}[section] 
\newtheorem{lemma}[theorem]{Lemma}   
\newtheorem{corollary}[theorem]{Corollary}
\newtheorem{proposition}[theorem]{Proposition}
\newtheorem{main-theorem}{Theorem}
\newtheorem*{problem*}{Problem}
\theoremstyle{definition}
\newtheorem*{question*}{Question}
\renewcommand{\mod}{\operatorname{mod}}
\newcommand{\alg}{\operatorname{alg}}
\newcommand{\rad}{\operatorname{rad}}
\newcommand{\soc}{\operatorname{soc}}
\newcommand{\umod}{\operatorname{\underline{mod}}}
\newcommand{\Ext}{\operatorname{Ext}}
\newcommand{\Hom}{\operatorname{Hom}}
\newcommand{\Ker}{\operatorname{Ker}}
\newcommand{\T}{\operatorname{T}}
\newcommand{\GL}{\operatorname{GL}}
\newcommand{\op}{\operatorname{op}}
\newcommand{\bN}{\mathbb{N}}
\newcommand{\bP}{\mathbb{P}}
\newcommand{\cB}{\mathcal{B}}
\newcommand{\overbar}[1]{\mkern 5mu\overline{\mkern-5mu#1\mkern-5mu}\mkern 5mu}
\newcommand{\tikzAngleOfLine}{\tikz@AngleOfLine}
\def\tikz@AngleOfLine(#1)(#2)#3{%
\pgfmathanglebetweenpoints{%
\pgfpointanchor{#1}{center}}{%
\pgfpointanchor{#2}{center}}
\pgfmathsetmacro{#3}{\pgfmathresult}%
}
\begin{document}

\title{Higher tetrahedral algebras}

{\def\thefootnote{}
\footnote{The research was supported by the research grant
DEC-2011/02/A/ST1/00216 of the National Science Center Poland.}
}

\author[K. Edrmann]{Karin Erdmann}
\address[Karin Erdmann]{Mathematical Institute,
   Oxford University,
   ROQ, Oxford OX2 6GG,
   United Kingdom}
\email{erdmann@maths.ox.ac.uk}

\author[A. Skowro\'nski]{Andrzej Skowro\'nski}
\address[Andrzej Skowro\'nski]{Faculty of Mathematics and Computer Science,
   Nicolaus Copernicus University,
   Chopina~12/18,
   87-100 Toru\'n,
   Poland}
\email{skowron@mat.uni.torun.pl}

\begin{abstract}
We introduce and study the higher tetrahedral algebras,
an exotic family of finite-dimensional tame
symmetric algebras over an algebraically closed field.
The Gabriel quiver of such an algebra is the triangulation 
quiver associated to the coherent orientation of the tetrahedron.
Surprisingly, these algebras occurred  in the classification
of all algebras of generalized quaternion type, but are not
weighted surface algebras.
We prove that a higher tetrahedral algebra is periodic
if and only if it is non-singular.

\bigskip

\noindent
\textit{Keywords:}
Syzygy, Periodic algebra, Symmetric algebra, Tame algebra
 
\noindent
\textit{2010 MSC:}
16D50, 16G20, 16G60, 16S80

\subjclass[2010]{16D50, 16G20, 16G60, 16S80}
\end{abstract}

\maketitle

\section{Introduction and the main results}\label{sec:intro}

Throughout this paper, $K$ will denote a fixed algebraically closed field.
By an algebra we mean an associative finite-dimensional $K$-algebra
with an identity.
For an algebra $A$, we denote by $\mod A$ the category of
finite-dimensional right $A$-modules and by $D$ the standard
duality $\Hom_K(-,K)$ on $\mod A$.
An algebra $A$ is called \emph{self-injective}
if $A_A$ is injective in $\mod A$, or equivalently,
the projective modules in $\mod A$ are injective.
A prominent class of self-injective algebras is formed
by the \emph{symmetric algebras} $A$ for which there exists
an associative, non-degenerate symmetric $K$-bilinear form
$(-,-): A \times A \to K$.
Classical examples of symmetric algebras are provided
by the blocks of group algebras of finite groups and
the Hecke algebras of finite Coxeter groups.
In fact, any algebra $A$ is the quotient algebra
of its trivial extension algebra $\T(A) = A \ltimes D(A)$,
which is a symmetric algebra.

From the remarkable Tame and Wild Theorem of Drozd
(see \cite{CB1,Dr})
the class of algebras over $K$ may be divided
into two disjoint classes.
The first class consists of the \emph{tame algebras}
for which the indecomposable modules occur in each dimension $d$
in a finite number of discrete and a finite number of one-parameter
families.
The second class is formed by the \emph{wild algebras}
whose representation theory comprises
the representation theories of all
algebras over $K$.
Accordingly, we may realistically hope to classify
the indecomposable finite-dimensional modules only
for the tame algebras.
Among the tame algebras we may distinguish 
the \emph{algebras of polynomial growth}
for which the number of one-parameter families 
of indecomposable  modules in each dimension $d$
is bounded by $d^m$ for some positive integer $m$
(depending only on the algebra)
whose representation theory is usually well understood
(see \cite{BES4,Sk1,Sk2} for some general results).
On the other hand, the representation theory of 
tame algebras of non-polynomial growth is still only emerging.

Let $A$ be an algebra.
Given a module $M$ in  $\mod A$, its \emph{syzygy}
is defined to be the kernel $\Omega_A(M)$ of a minimal
projective cover of $M$ in $\mod A$.
The syzygy operator $\Omega_A$ is a very important tool
to construct modules in $\mod A$ and relate them.
For $A$ self-injective, it induces an equivalence
of the stable module category $\umod A$,
and its inverse is the shift of a triangulated structure
on $\umod A$ \cite{Ha1}.
A module $M$ in $\mod A$ is said to be \emph{periodic}
if $\Omega_A^n(M) \cong M$ for some $n \geq 1$, and if so
the minimal such $n$ is called the \emph{period} of $M$.
The action of $\Omega_A$ on $\mod A$ can effect
the algebra structure of $A$.
For example, if all simple modules in $\mod A$ are periodic,
then $A$ is a self-injective algebra.
An algebra $A$ is defined to be \emph{periodic} if it is periodic
viewed as a module over the enveloping algebra
$A^e = A^{\op} \otimes_K A$, or equivalently,
as an $A$-$A$-bimodule.
It is known that if $A$ is a periodic algebra of period $n$ then
for any indecomposable non-projective module $M$
in $\mod A$  the syzygy $\Omega_A^n(M)$ is isomorphic to $M$.

Finding or possibly classifying periodic algebras
is an important problem. It is very interesting because of
connections
with group theory, topology, singularity theory
and cluster algebras.
Periodicity of an algebra, and its period,
are invariant under derived equivalences 
\cite{Ric2} (see also \cite{ESk3}).
Therefore, to study periodic algebras 
we may assume that the algebras are basic
and indecomposable.

We are concerned with the classification of all periodic tame
symmetric algebras.
In \cite{Du1} Dugas proved that every representation-finite
self-injective algebra, without simple blocks, is a periodic algebra.
We note that, by general theory (see \cite[Section~3]{Sk2}),
a basic, indecomposable, non-simple, symmetric algebra $A$
is representation-finite if and only if $A$ is socle equivalent
to an algebra $\T(B)^G$ of invariants of the trivial extension
algebra $\T(B)$ of a tilted algebra $B$ of Dynkin type
with respect to free action of a finite cyclic group $G$.
The representation-infinite, indecomposable,
periodic algebras of polynomial growth were classified
by Bia\l kowski, Erdmann and Skowro\'nski in \cite{BES4}
(see also \cite{Sk1,Sk2}).
In particular, it follows from \cite{BES4}
that every basic, indecomposable,
representation-infinite symmetric tame algebra
of polynomial growth is socle
equivalent to an algebra $\T(B)^G$
of invariants of the trivial extension algebra $\T(B)$
of a tubular algebra $B$ of tubular type
$(2,2,2,2)$, $(3,3,3)$, $(2,4,4)$, $(2,3,6)$
(introduced by Ringel \cite{R}) with respect to free
action of a finite cyclic group $G$.

Recently we introduced in \cite{ESk-WSA}
the weighted surface algebras of triangulated surfaces
with arbitrary oriented triangles and proved that all these algebras,
except the singular tetrahedral algebras, are periodic tame
symmetric algebras of period $4$.
Here, we investigate the periodicity of higher tetrahedral algebras,
being ``higher analogues'' of the tetrahedral algebras
studied in \cite{ESk-WSA}.

Consider the tetrahedron 
\[
\begin{tikzpicture}
[scale=1]
\node (A) at (-2,0) {$\bullet$};
\node (B) at (2,0) {$\bullet$};
\node (C) at (0,.85) {$\bullet$};
\node (D) at (0,2.8) {$\bullet$};
\coordinate (A) at (-2,0) ;
\coordinate (B) at (2,0) ;
\coordinate (C) at (0,.85) ;
\coordinate (D) at (0,2.8) ;
\draw[thick]
(A) edge node [left] {3} (D)
(D) edge node [right] {6} (B)
(D) edge node [below right] {2} (C)
(A) edge node [above] {5} (C)
(C) edge node [above] {4} (B)
(A) edge node [below] {1} (B) ;
\end{tikzpicture}
\]
with the coherent orientation
of triangles:
$(1\ 5\ 4)$, $(2\ 5\ 3)$, $(2\ 6\ 4)$, $(1\ 6\ 3)$.
Then, following \cite{ESk-WSA}, we have the associated 
triangulation quiver $(Q,f)$ of the form
\[
\begin{tikzpicture}
[scale=.85]
\node (1) at (0,1.72) {$1$};
\node (2) at (0,-1.72) {$2$};
\node (3) at (2,-1.72) {$3$};
\node (4) at (-1,0) {$4$};
\node (5) at (1,0) {$5$};
\node (6) at (-2,-1.72) {$6$};
\coordinate (1) at (0,1.72);
\coordinate (2) at (0,-1.72);
\coordinate (3) at (2,-1.72);
\coordinate (4) at (-1,0);
\coordinate (5) at (1,0);
\coordinate (6) at (-2,-1.72);
\fill[fill=gray!20]
      (0,2.22cm) arc [start angle=90, delta angle=-360, x radius=4cm, y radius=2.8cm]
 --  (0,1.72cm) arc [start angle=90, delta angle=360, radius=2.3cm]
     -- cycle;
\fill[fill=gray!20]
    (1) -- (4) -- (5) -- cycle;
\fill[fill=gray!20]
    (2) -- (4) -- (6) -- cycle;
\fill[fill=gray!20]
    (2) -- (3) -- (5) -- cycle;

\node (1) at (0,1.72) {$1$};
\node (2) at (0,-1.72) {$2$};
\node (3) at (2,-1.72) {$3$};
\node (4) at (-1,0) {$4$};
\node (5) at (1,0) {$5$};
\node (6) at (-2,-1.72) {$6$};
\draw[->,thick] (-.23,1.7) arc [start angle=96, delta angle=108, radius=2.3cm] node[midway,right] {$\nu$};
\draw[->,thick] (-1.87,-1.93) arc [start angle=-144, delta angle=108, radius=2.3cm] node[midway,above] {$\mu$};
\draw[->,thick] (2.11,-1.52) arc [start angle=-24, delta angle=108, radius=2.3cm] node[midway,left] {$\alpha$};
\draw[->,thick]
(1) edge node [right] {$\delta$} (5)
(2) edge node [left] {$\varepsilon$} (5)
(2) edge node [below] {$\varrho$} (6)
(3) edge node [below] {$\sigma$} (2)
(4) edge node [left] {$\gamma$} (1)
(4) edge node [right] {$\beta$} (2)
(5) edge node [right] {$\xi$} (3)
(5) edge node [below] {$\eta$} (4)
(6) edge node [left] {$\omega$} (4)
;
\end{tikzpicture}
\]
where 
$f$ is the permutation of arrows of order $3$ described by
the shaded subquivers.
We denote by $g$ the permutation on the set of arrows of $Q$ 
whose $g$-orbits are the four white $3$-cycles.

Let $m \geq 2$ be a natural number and $\lambda \in K$.
We denote by $\Lambda(m,\lambda)$ the algebra given
by the above quiver and the relations:
\begin{align*}
 \gamma\delta &= \beta\varepsilon + \lambda (\beta\varrho\omega)^{m-1} \beta\varepsilon, 
 &
 \delta\eta &= \nu\omega,
 &
 \eta\gamma &= \xi\alpha,
 & 
 \nu \mu &= \delta\xi ,
\\
 \varrho\omega &= \varepsilon\eta + \lambda (\varepsilon\xi\sigma)^{m-1} \varepsilon\eta,
 &
 \omega\beta &= \mu\sigma,
 &
 \beta\varrho &= \gamma\nu ,
 &
 \mu \alpha &= \omega \gamma ,
\\
 \xi\sigma &= \eta\beta + \lambda (\eta\gamma\delta)^{m-1} \eta\beta,
 &
 \sigma\varepsilon &= \alpha\delta,
 &
 \varepsilon\xi &= \varrho\mu,
 &
 \alpha\nu &= \sigma\varrho, 
\\
\omit\rlap{\qquad\quad$\big(\theta f(\theta) f^2(\theta)\big)^{m-1} \theta f(\theta) g\big(f(\theta)\big) = 0$ for any arrow $\theta$ in $Q$.}
\end{align*}

We call $\Lambda(m,\lambda)$
a \emph{higher tetrahedral algebra}.
Moreover, an algebra $\Lambda(m,\lambda)$ with
$\lambda \in K^* = K \setminus \{0\}$ is said to be
a \emph{non-singular higher tetrahedral algebra}.

The following two theorems describe some properties
of  higher tetrahedral  algebras.

\begin{main-theorem}
\label{th:main1}
Let $\Lambda = \Lambda(m,\lambda)$
be a higher tetrahedral algebra.
Then $\Lambda$ is a finite-dimensional symmetric algebra 
  with $\dim_K \Lambda = 36 m$.
\end{main-theorem}

\begin{main-theorem}
\label{th:main2}
Let $\Lambda = \Lambda(m,\lambda)$
be a higher tetrahedral algebra.
Then $\Lambda$ is a tame algebra of non-polynomial growth.
\end{main-theorem}

The following theorem is the main result of the paper.

\begin{main-theorem}
\label{th:main3}
Let $\Lambda = \Lambda(m,\lambda)$
be a higher tetrahedral algebra.
Then the following statements are equivalent:
\begin{enumerate}[(i)]
 \item
  $\mod \Lambda$ admits a periodic simple module.
 \item
  All simple modules in $\mod \Lambda$ are periodic of period $4$.
 \item
  $\Lambda$ is a periodic algebra of period $4$.
 \item
  $\Lambda$ is non-singular.
\end{enumerate}
\end{main-theorem}

Following \cite{ESk-AGQT},
an algebra $A$ is called an algebra of generalized quaternion
type if $A$ is representation-infinite tame symmetric 
and every simple module in $\mod A$ is periodic of period $4$.
We prove in \cite{ESk-AGQT} that an algebra $A$ is of generalized
quaternion type with $2$-regular Gabriel quiver if and only if 
$A$ is a socle deformation of a weighted surface algebra,
different from the singular tetrahedral algebra,
or is a non-singular higher tetrahedral algebra.

This paper is organized as follows.
In Section~\ref{sec:pre} we recall background 
on special biserial algebras and degenerations of algebras.
In Section~\ref{sec:bimodule} we describe our general approach and
results for constructing a minimal projective bimodule
resolution of an algebra with periodic simple modules.
Section~\ref{sec:proof1} is devoted to basic properties
of the higher  tetrahedral algebras and the proof of 
Theorem~\ref{th:main1}.
Sections \ref{sec:proof2} and \ref{sec:proof3} contain 
the proofs of 
Theorems \ref{th:main2} and \ref{th:main3}, respectively.

For general background on the relevant representation theory
we refer to the books
\cite{ASS,SS,SY}.

\section{Preliminary results}\label{sec:pre}

A \emph{quiver} is a quadruple $Q = (Q_0, Q_1, s, t)$
consisting of a finite set $Q_0$ of vertices,
a finite set $Q_1$ of arrows,
and two maps $s,t : Q_1 \to Q_0$ which associate
to each arrow $\alpha \in Q_1$ its source $s(\alpha) \in Q_0$
and  its target $t(\alpha) \in Q_0$.
We denote by $K Q$ the path algebra of $Q$ over $K$
whose underlying $K$-vector space has as its basis
the set of all paths in $Q$ of length $\geq 0$, and
by $R_Q$ the arrow ideal of $K Q$ generated by all paths 
in $Q$ of length $\geq 1$.
An ideal $I$ in $K Q$ is said to be admissible
if there exists $m \geq 2$ such that
$R_Q^m \subseteq I \subseteq R_Q^2$.
If $I$ is an admissible ideal in $K Q$, then
the quotient algebra $K Q/I$ is called
a bound quiver algebra, and is a finite-dimensional
basic $K$-algebra.
Moreover, $K Q/I$ is indecomposable if and only if
$Q$ is connected.
Every basic, indecomposable, finite-dimensional
$K$-algebra $A$ has a bound quiver presentation
$A \cong K Q/I$, where $Q = Q_A$ is the \emph{Gabriel
quiver} of $A$ and $I$ is an admissible ideal in $K Q$.
For a bound quiver algebra $A = KQ/I$, we denote by $e_i$,
$i \in Q_0$, the associated complete set of pairwise
orthogonal primitive idempotents of $A$, and by
$S_i = e_i A/e_i \rad A$ (respectively, $P_i = e_i A$),
$i \in Q_0$, the associated complete family of pairwise
non-isomorphic simple modules (respectively, indecomposable
projective modules) in $\mod A$.

Following \cite{SW}, an algebra $A$ is said to be
\emph{special biserial} if $A$ is isomorphic
to a bound quiver algebra $K Q/I$, where the bound
quiver $(Q,I)$ satisfies the following conditions:
\begin{enumerate}[(a)]
 \item
  each vertex of $Q$ is a source and target of at most two arrows,
 \item
  for any arrow $\alpha$ in $Q$ there are at most
  one arrow $\beta$ and at most one arrow $\gamma$
  with $\alpha \beta \notin I$ and $\gamma \alpha \notin I$.
\end{enumerate}
Moreover, if in addition $I$ is generated by paths of $Q$, then
$A = K Q/I$ is said to be a \emph{string algebra} \cite{BR}.
It was  proved in \cite{PS} that the class
of special biserial algebras coincides with the class
of biserial algebras (indecomposable projective modules
have biserial structure) which admit simply connected
Galois coverings.
Furthermore, by \cite[Theorem~1.4]{WW}
we know that  every special biserial agebra is a quotient algebra
of a symmetric special biserial algebra.
We also mention that, if $A$ is a self-injective
special biserial algebra, then $A/\soc(A)$ is a  string algebra.

The following  has been proved by Wald and Waschb\"usch
in \cite{WW} (see also \cite{BR,DS} for alternative
proofs).

\begin{proposition}
\label{prop:2.1}
Every special biserial algebra is tame.
\end{proposition}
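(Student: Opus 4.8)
The plan is to reduce the problem to the case of string algebras, for which the indecomposable modules admit a complete combinatorial description, and then to read off tameness directly. Let $A = KQ/I$ be special biserial. The starting observation is that $A$ has only finitely many indecomposable projective-injective modules that are non-uniserial; call them $P_1, \dots, P_r$, and let $S$ be the two-sided ideal of $A$ generated by their socles. First I would verify that $B = A/S$ is a string algebra: passing to the quotient by $S$ removes exactly the biserial overlaps responsible for the relations that are not monomial (the binomial relations $p - q$ joining the two maximal paths through a biserial vertex become the pair of monomial relations $p = 0 = q$), so the admissible ideal defining $B$ is generated by paths, while conditions (a) and (b) are plainly inherited from $A$.

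Next I would show that the passage from $A$ to $B$ costs only finitely many indecomposables in each dimension. Concretely, if $M$ is an indecomposable $A$-module on which $S$ acts non-trivially, then $M$ has one of the $P_i$ as a direct summand, and hence $M \cong P_i$ for some $i$ by indecomposability; every other indecomposable $A$-module is annihilated by $S$, i.e. is a $B$-module. Conversely, since $B$ is a quotient of $A$, every $B$-module is an $A$-module. Thus, for each dimension $d$, the indecomposable $A$-modules of dimension $d$ consist of the at most $r$ modules among $P_1, \dots, P_r$ together with the indecomposable $B$-modules of dimension $d$.

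It then suffices to prove that every string algebra is tame, and here I would invoke the classification of Butler and Ringel \cite{BR}: the indecomposable modules over a string algebra are the \emph{string modules}, indexed by the strings (of which there are finitely many in each dimension), and the \emph{band modules}, indexed by a band together with an indecomposable module over $K[x,x^{-1}]$. The band modules of a fixed dimension split into finitely many one-parameter families, one for each band, parametrized by the scalar in $K^*$ and the Jordan-block size, which is precisely the defining bound for tameness. Adjoining the finitely many extra modules $P_1, \dots, P_r$ in each dimension does not affect this bound, so $A$ is tame.

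The main obstacle is the string-algebra classification itself: proving that strings and bands exhaust the indecomposables and that the band modules assemble into genuine one-parameter families. This is the substantive content of \cite{BR}, equivalently obtainable by the matrix-problem or functorial-filtration methods, and I would cite it rather than reproduce it; the reduction from special biserial to string algebras described above is elementary by comparison. As an alternative to the quotient $B = A/S$, one may instead use \cite{WW} to realize $A$ as a quotient of a symmetric special biserial algebra $\tilde{A}$ and exploit that tameness descends to quotients (every $A$-module being an $\tilde{A}$-module), but since $\tilde{A}/\soc(\tilde{A})$ is again a string algebra the essential step remains the reduction to a string algebra.
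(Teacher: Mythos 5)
Your proposal is correct, and it is essentially the argument the paper is implicitly relying on: the paper gives no proof of Proposition~\ref{prop:2.1} at all, but simply attributes it to Wald--Waschb\"usch \cite{WW} with \cite{BR,DS} as alternative sources, and your reduction (kill the socles of the non-uniserial projective-injective modules to obtain a string algebra, note that this loses only finitely many indecomposables per dimension, then invoke the string/band classification of \cite{BR}) is precisely the standard route taken in those references. The only points you gloss over --- that the non-uniserial projective-injectives of a special biserial algebra have simple essential socle (which is what makes the splitting-off argument work) and that the binomial relations of $I$ are exactly the socle relations of these modules --- are standard facts from the structure theory, so nothing essential is missing.
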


For a positive integer $d$, we denote by $\alg_d(K)$ the affine
variety of associative $K$-algebra structures with identity on
the affine space $K^d$.
Then the general linear group $\GL_d(K)$ acts on $\alg_d(K)$
by transport of the structures, and the $\GL_d(K)$-orbits in
$\alg_d(K)$ correspond to the isomorphism classes of $d$-dimensional
algebras (see \cite{Kr} for details). We identify a $d$-dimensional
algebra $A$ with the point of $\alg_d(K)$ corresponding to it.
For two $d$-dimensional algebras $A$ and $B$, we say that $B$
is a \emph{degeneration} of $A$ ($A$ is a \emph{deformation} of $B$)
if $B$ belongs to the closure of the $\GL_d(K)$-orbit
of $A$ in the Zariski topology of $\alg_d(K)$.

Geiss' Theorem \cite{Ge} shows that if $A$ and $B$ are two
$d$-dimensional algebras, $A$ degenerates to $B$ and $B$ is a tame
algebra, then $A$ is also a tame algebra (see also \cite{CB2}).
We will apply this theorem in the following special situation.

\begin{proposition}
\label{prop:2.2}
Let $d$ be a positive integer, and $A(t)$, $t \in K$,
be an algebraic family in $\alg_d(K)$ such that $A(t) \cong A(1)$
for all $t \in K \setminus \{0\}$.
Then $A(1)$ degenerates to $A(0)$.
In particular, if $A(0)$ is tame, then $A(1)$ is tame.
\end{proposition}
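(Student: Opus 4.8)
The plan is to construct an explicit one-parameter algebraic family in $\GL_d(K)$ that witnesses $A(1)$ degenerating to $A(0)$, and then invoke Geiss' Theorem to transport tameness. The hypothesis gives us an algebraic family $A(t)$, $t \in K$, of points in $\alg_d(K)$ with $A(t) \cong A(1)$ for all $t \neq 0$. I want to show that $A(0)$ lies in the Zariski closure of the $\GL_d(K)$-orbit of $A(1)$.

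First I would make precise what ``algebraic family'' and ``isomorphism'' buy us. Since $A(t) \cong A(1)$ for each $t \in K^* = K \setminus \{0\}$, the orbit closure $\overline{\GL_d(K) \cdot A(1)}$ contains every point $A(t)$ with $t \neq 0$; indeed each such $A(t)$ lies in the orbit itself. Thus the image of the morphism $K^* \to \alg_d(K)$, $t \mapsto A(t)$, is entirely contained in the single orbit $\GL_d(K) \cdot A(1)$, hence in its closure. Next I would use that the closure is a Zariski-closed set and that $A(-) : K \to \alg_d(K)$ is a morphism of varieties (this is the meaning of ``algebraic family''): the preimage $\{t \in K : A(t) \in \overline{\GL_d(K) \cdot A(1)}\}$ is closed in $K$, and by the previous step it contains the dense subset $K^*$. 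Since $K$ is irreducible and $K^*$ is dense in it (the affine line minus a point), the closed preimage must equal all of $K$. In particular $t = 0$ lies in it, so $A(0) \in \overline{\GL_d(K) \cdot A(1)}$, which is exactly the assertion that $A(1)$ degenerates to $A(0)$.

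For the final sentence, suppose $A(0)$ is tame. Then $A(1)$ and $A(0)$ are both $d$-dimensional, $A(1)$ degenerates to $A(0)$ by what we just proved, and $A(0)$ is tame; Geiss' Theorem \cite{Ge} (recalled above) then yields that $A(1)$ is tame as well.

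The main obstacle, and the only genuinely delicate point, is the topological argument that a Zariski-closed subset of the affine line $K$ containing the dense open subset $K^*$ must contain $0$. This rests on the irreducibility of $K$ together with the density of $K^*$; the one subtlety to handle carefully is confirming that the orbit-closure $\overline{\GL_d(K) \cdot A(1)}$ is genuinely closed (so that its morphism-preimage is closed) and that $t \mapsto A(t)$ is a morphism, both of which are encoded in the standing hypotheses on $\alg_d(K)$ and on the notion of an algebraic family. Everything else is a formal consequence of the definition of degeneration as membership in the orbit closure and of the cited Geiss' Theorem.
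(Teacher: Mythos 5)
Your proof is correct. The paper states Proposition~\ref{prop:2.2} without proof, treating it as a standard consequence of the definitions (degeneration as membership in the $\GL_d(K)$-orbit closure) together with Geiss' Theorem, and your argument --- that the regular map $t \mapsto A(t)$ sends the dense subset $K\setminus\{0\}$ of the irreducible variety $K$ into the closed set $\overline{\GL_d(K)\cdot A(1)}$, so that its closed preimage is all of $K$ and hence $A(0)$ lies in the orbit closure --- is exactly the standard argument the authors leave implicit.
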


A family of algebras $A(t)$, $t \in K$, in $\alg_d(K)$
is said to be \emph{algebraic} if the induced map
$A(-) : K \to \alg_d(K)$ is a regular map of affine varieties.

\section{Bimodule resolutions of self-injective algebras}\label{sec:bimodule}

In this section we describe a general approach
for proving that an algebra $A$ with periodic simple modules
is a periodic algebra.

Let $A = K Q/I$ be a bound quiver algebra,
and $e_i$, $i \in Q_0$, be the primitive idempotents of $A$
associated to the vertices of $Q$.
Then $e_i \otimes e_j$, $i,j \in Q_0$, form a set
of pairwise orthogonal primitive idempotents of
the enveloping algebra $A^e = A^{\op} \otimes_K A$
whose sum is the identity of $A^e$.
Hence,
$P(i,j) = (e_i \otimes e_j) A^e = A e_i \otimes e_j A$,
for $i,j \in Q_0$, form a complete set of pairwise non-isomorphic
indecomposable projective modules in $\mod A^e$
(see \cite[Proposition~IV.11.3]{SY}).

The following result by Happel \cite[Lemma~1.5]{Ha2} describes
the terms of a minimal projective resolution of $A$ in $\mod A^e$.

\begin{proposition}
\label{prop:3.1}
Let $A = K Q/I$ be a bound quiver algebra.
Then there is in $\mod A^e$ a minimal projective resolution of $A$
of the form
\[
  \cdots \rightarrow
  \bP_n \xrightarrow{d_n}
  \bP_{n-1} \xrightarrow{ }
  \cdots \rightarrow
  \bP_1 \xrightarrow{d_1}
  \bP_0 \xrightarrow{d_0}
  A \rightarrow 0,
\]
where
\[
  \bP_n = \bigoplus_{i,j \in Q_0}
          P(i,j)^{\dim_K \Ext_A^n(S_i,S_j)}
\]
for any $n \in \bN$.
\end{proposition}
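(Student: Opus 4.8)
The plan is to combine two standard facts: the existence and uniqueness of a minimal projective resolution of the bimodule $A$ over the finite-dimensional algebra $A^e$, and a change-of-rings identification of the resulting Hochschild cohomology with $\Ext$-groups between simple modules. Since $A^e$ is again a finite-dimensional $K$-algebra and $A$ is finite-dimensional over $A^e$, a minimal projective resolution $\bP_\bullet \to A$ exists and is unique up to isomorphism; the only task is to compute the multiplicity $c_{ij}(n)$ of each indecomposable projective $P(i,j)$ in $\bP_n$. As recalled just before the statement, the $P(i,j)$, $i,j \in Q_0$, exhaust the indecomposable projective $A^e$-modules, and since $A$ is basic over the algebraically closed field $K$ each simple $A^e$-module $T(i,j) = \topp P(i,j)$ is one-dimensional.

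First I would read off the multiplicities from minimality. Because the resolution is minimal, $d_n(\bP_n) \subseteq \rad \bP_{n-1}$ for all $n$, so after applying $\Hom_{A^e}(-,T(i,j))$ every differential becomes zero; using $\Hom_{A^e}(P(k,l),T(i,j)) \cong \delta_{(k,l),(i,j)}K$ one obtains $c_{ij}(n) = \dim_K \Ext^n_{A^e}(A,T(i,j))$. It therefore suffices to prove, for all $i,j \in Q_0$ and $n \in \bN$, the identity $\dim_K \Ext^n_{A^e}(A,T(i,j)) = \dim_K \Ext^n_A(S_i,S_j)$.

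The key step is the bimodule identification and the adjunction behind it. I would first check that the one-dimensional bimodule $\Hom_K(S_i,S_j)$, with its natural $A$-$A$-structure $(a \cdot f \cdot b)(s) = f(sa)b$, has left action through the character attached to $i$ and right action through the character attached to $j$, whence $\Hom_K(S_i,S_j) \cong T(i,j)$. Then, choosing any projective resolution $P_\bullet \to A$ in $\mod A^e$, the tensor–hom adjunction gives a natural isomorphism $\Hom_{A^e}(P_\bullet, \Hom_K(S_i,S_j)) \cong \Hom_A(S_i \otimes_A P_\bullet, S_j)$. Since each $P_n$ is projective, hence flat, as a left $A$-module, the complex $S_i \otimes_A P_\bullet$ is a projective resolution of $S_i \otimes_A A = S_i$ in $\mod A$; taking cohomology yields $\Ext^n_{A^e}(A, \Hom_K(S_i,S_j)) \cong \Ext^n_A(S_i,S_j)$, which together with the previous paragraph gives $c_{ij}(n) = \dim_K \Ext^n_A(S_i,S_j)$ and hence the asserted form of $\bP_n$.

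The main obstacle is the bookkeeping of the left/right module conventions: one must pin down the $A$-$A$-bimodule structures on $P(i,j)$, on $T(i,j)$, and on $\Hom_K(S_i,S_j)$ consistently, so that the adjunction produces $\Ext^n_A(S_i,S_j)$ in the stated order rather than $\Ext^n_A(S_j,S_i)$, and one must verify that the adjunction is natural in $P_\bullet$ and that flatness genuinely makes $S_i \otimes_A P_\bullet$ exact. These points are routine once the handedness is fixed, but they are precisely where errors tend to creep in.
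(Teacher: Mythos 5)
Your argument is correct: the paper gives no proof of this proposition but simply cites Happel \cite[Lemma~1.5]{Ha2}, and your reasoning (reading off the multiplicities from minimality via $\Hom_{A^e}(-,T(i,j))$, then identifying $T(i,j)\cong\Hom_K(S_i,S_j)$ and using the adjunction together with flatness of the $\bP_n$ as left $A$-modules to get $\Ext^n_{A^e}(A,\Hom_K(S_i,S_j))\cong\Ext^n_A(S_i,S_j)$) is precisely the standard proof of that cited lemma. The only point worth making explicit is that each $S_i\otimes_A \bP_n$ is not merely exact in the resolution but also projective as a right $A$-module, which follows since $\bP_n$ is a summand of copies of $A\otimes_K A$, so $S_i\otimes_A\bP_n$ is a summand of copies of the free module $S_i\otimes_K A$.
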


The syzygy modules have an important property, a proof for the next
Lemma may be found in \cite[Lemma~IV.11.16]{SY}.

\begin{lemma}
\label{lem:3.2}
Let $A$ be an algebra.
For any positive integer $n$, the module  $\Omega_{A^e}^n(A)$
is  projective as a left $A$-module and also as a right
$A$-module.
\end{lemma}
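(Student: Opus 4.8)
The plan is to exploit the explicit form of the minimal projective bimodule resolution of $A$ recorded in Proposition~\ref{prop:3.1}, together with the elementary observation that the indecomposable projective $A^e$-modules restrict to projective modules on each side.

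First I would observe that every term of that resolution is a direct sum of modules $P(i,j) = Ae_i \otimes_K e_j A$. Viewed as a left $A$-module, $P(i,j)$ is isomorphic to a direct sum of copies of $Ae_i$ indexed by a $K$-basis of the space $e_j A$, hence is projective as a left $A$-module; symmetrically, it is projective as a right $A$-module. Consequently each $\bP_n$ appearing in Proposition~\ref{prop:3.1} is projective both as a left and as a right $A$-module. Since the restriction functors from $\mod A^e$ to the categories of left and of right $A$-modules are exact, restricting the resolution yields, on each side, a (not necessarily minimal) exact sequence of one-sided projectives.

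The core of the argument is an inductive splitting. Setting $\Omega_{A^e}^0(A) = A$ and using the short exact sequences
\[
 0 \to \Omega_{A^e}^{n}(A) \to \bP_{n-1} \to \Omega_{A^e}^{n-1}(A) \to 0,
\]
I would argue by induction on $n$ that $\Omega_{A^e}^n(A)$ is projective on each side. The base case uses that $A$ is free, hence projective, as both a left and a right $A$-module. For the inductive step, the induction hypothesis says $\Omega_{A^e}^{n-1}(A)$ is projective as a one-sided module, so the displayed sequence splits over $A$ on that side; therefore $\Omega_{A^e}^n(A)$ is a direct summand of the one-sided projective module $\bP_{n-1}$, and is itself projective. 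Running the identical argument on the opposite side completes the proof.

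The argument is essentially formal once Proposition~\ref{prop:3.1} is available, so I do not anticipate a serious obstacle. The only points requiring genuine care are to verify that $P(i,j)$ really does restrict to a projective module on each side, and that the relevant short exact sequences split precisely because their rightmost terms are one-sided projective.
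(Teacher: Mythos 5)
Your proof is correct. The paper does not prove Lemma~\ref{lem:3.2} itself but refers to \cite[Lemma~IV.11.16]{SY}, and your argument --- each $P(i,j)=Ae_i\otimes_K e_jA$ restricts to a one-sided projective, so by induction each short exact sequence $0\to\Omega_{A^e}^{n}(A)\to\bP_{n-1}\to\Omega_{A^e}^{n-1}(A)\to 0$ splits on each side and exhibits $\Omega_{A^e}^{n}(A)$ as a summand of a one-sided projective --- is exactly the standard proof given there.
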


There is no general recipe for
the differentials $d_n$ in Proposition~\ref{prop:3.1}, except for the first three which we will now describe.
We have
\[
  \bP_0 = \bigoplus_{i \in Q_0} P(i,i)
        = \bigoplus_{i \in Q_0} A e_i \otimes e_i A .
\]
The homomorphism $d_0 : \bP_0 \to A$ in $\mod A^e$ defined by
$d_0 (e_i \otimes e_i) = e_i$ for all $i \in Q_0$
is a minimal projective cover of $A$ in $\mod A^e$.
Recall that, for two vertices $i$ and $j$ in $Q$,
the number of arrows from $i$ to $j$ in $Q$ is equal
to $\dim_K \Ext_A^1(S_i,S_j)$
(see \cite[Lemma~III.2.12]{ASS}).
Hence we have
\[
  \bP_1 = \bigoplus_{\alpha \in Q_1} P\big(s(\alpha),t(\alpha)\big)
        = \bigoplus_{\alpha \in Q_1} A e_{s(\alpha)} \otimes e_{t(\alpha)} A
        .
\]
Then we have the following known fact (see \cite[Lemma~3.3]{BES4}
for a proof).

\begin{lemma}
\label{lem:3.3}
Let $A = K Q/I$ be a bound quiver algebra, and
$d_1 : \bP_1 \to \bP_0$ the homomorphism in $\mod A^e$
defined by
\[
 d_1(e_{s(\alpha)} \otimes e_{t(\alpha)}) =
   \alpha \otimes e_{t(\alpha)} - e_{s(\alpha)} \otimes \alpha
\]
for any arrow $\alpha$ in $Q$.
Then $d_1$ induces a minimal projective cover
$d_1 : \bP_1 \to \Omega_{A^e}^1(A)$ of
$\Omega_{A^e}^1(A) = \Ker d_0$ in $\mod A^e$.
In particular, we have
$\Omega_{A^e}^2(A) \cong \Ker d_1$ in $\mod A^e$.
\end{lemma}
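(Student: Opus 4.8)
The statement to prove is Lemma~\ref{lem:3.3}, which asserts that the explicit map $d_1$ gives a minimal projective cover of $\Omega_{A^e}^1(A)=\Ker d_0$, so that $\Omega_{A^e}^2(A)\cong\Ker d_1$. Although the lemma cites \cite[Lemma~3.3]{BES4} for a proof, let me sketch how I would prove it directly.

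\smallskip

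The plan is to verify three things in turn: that $d_1$ lands in $\Ker d_0$, that its image is exactly $\Ker d_0$ (surjectivity onto $\Omega_{A^e}^1(A)$), and that $d_1$ is a projective cover, i.e.\ $\Ker d_1\subseteq\rad\bP_1$. First I would check that $d_0 d_1=0$: applying $d_0$ to the generator $d_1(e_{s(\alpha)}\otimes e_{t(\alpha)})=\alpha\otimes e_{t(\alpha)}-e_{s(\alpha)}\otimes\alpha$ and using the $A^e$-module structure gives $\alpha\, e_{t(\alpha)}-e_{s(\alpha)}\,\alpha=\alpha-\alpha=0$, so $\Im d_1\subseteq\Ker d_0$. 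For surjectivity onto $\Ker d_0$, I would argue that $\Omega_{A^e}^1(A)$ is generated as an $A^e$-module by the elements $\alpha\otimes e_{t(\alpha)}-e_{s(\alpha)}\otimes\alpha$ over the arrows $\alpha$. The natural way to see this is that $\Ker d_0$ is generated by differences $a\otimes b - ab'\otimes b''$ arising from the relation $d_0(e_i\otimes e_i)=e_i$, and every such difference can be rewritten, by a telescoping argument along a path expressing the relevant algebra elements as products of arrows, as a sum of the arrow-indexed generators with coefficients in $A^e$.

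\smallskip

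The key structural input is that $\bP_1$ corresponds precisely to $\Ext_A^1(S_i,S_j)$ by Proposition~\ref{prop:3.1}, and that the number of arrows from $i$ to $j$ equals $\dim_K\Ext_A^1(S_i,S_j)$ by \cite[Lemma~III.2.12]{ASS}, as already noted before the lemma. This means $\bP_1$ has exactly the right rank to be the first syzygy term, so once I show $\Im d_1=\Ker d_0$ and that $d_1$ is a projective cover, minimality is automatic from Proposition~\ref{prop:3.1}: the minimal resolution has $\bP_1$ in degree one, and a surjective map from the correct projective onto $\Omega^1_{A^e}(A)$ that is a projective cover must be the minimal one. To establish the projective-cover property I would show $d_1(\bP_1)\subseteq\rad\bP_0$, equivalently that no generator maps onto a generator of $\bP_0$; this follows because each term $\alpha\otimes e_{t(\alpha)}$ and $e_{s(\alpha)}\otimes\alpha$ lies in $\rad\bP_0=\bigoplus_i A e_i\otimes e_i A$ intersected with positive-length paths on one tensor factor, hence in the radical.

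\smallskip

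I expect the main obstacle to be the surjectivity step, namely proving that the arrow-indexed elements generate all of $\Ker d_0$ as an $A^e$-module. The difficulty is that $\Ker d_0$ is a priori only known to be the kernel of the multiplication-type map, and translating an arbitrary kernel element into a combination of the specific generators requires a careful telescoping identity of the form $a\otimes b-1\otimes ab=\sum(\text{arrow terms})$ valid after choosing path representatives and working modulo the relations $I$. Once that generation statement is in hand, the remaining verifications ($d_0 d_1=0$, the radical containment, and invoking Proposition~\ref{prop:3.1} for minimality) are routine. The final conclusion $\Omega_{A^e}^2(A)\cong\Ker d_1$ is then immediate, since $d_1$ being a projective cover of $\Omega_{A^e}^1(A)$ identifies its kernel with the next syzygy.
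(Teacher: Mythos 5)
The paper does not actually prove this lemma: it is stated as a ``known fact'' with a pointer to \cite[Lemma~3.3]{BES4}, so there is no in-paper argument to compare against. Your sketch is the standard direct proof and its overall architecture is sound: $d_0d_1=0$ is the immediate computation you give; surjectivity onto $\Ker d_0$ is the telescoping identity
$p\otimes e_{t(p)}-e_{s(p)}\otimes p=\sum_k \alpha_1\cdots\alpha_{k-1}\bigl(\alpha_k\otimes e_{t(\alpha_k)}-e_{s(\alpha_k)}\otimes\alpha_k\bigr)\alpha_{k+1}\cdots\alpha_m$ for a path $p=\alpha_1\cdots\alpha_m$; and minimality can indeed be extracted from Proposition~\ref{prop:3.1}. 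For the step you flag as the main obstacle, the clean way to finish is: modulo $\Im d_1$, every generator $p\otimes q$ of $\bP_0$ is congruent (by telescoping $p$) to $e_{s(p)}\otimes pq$, so any $u\in\bP_0$ satisfies $u\equiv\sum_j e_j\otimes a_j \pmod{\Im d_1}$ with $a_j\in e_jA$; if $u\in\Ker d_0$ then $\sum_j a_j=0$, and since the $a_j$ lie in the complementary summands $e_jA$ of $A$, each $a_j=0$, whence $u\in\Im d_1$. No delicate ``working modulo $I$'' is needed beyond choosing path representatives, because $d_1$ is defined on a free bimodule.

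One step of your justification is misplaced, though the correct argument is already present elsewhere in your text. Showing $d_1(\bP_1)\subseteq\rad\bP_0$ does \emph{not} establish that $d_1$ is a projective cover of $\Ker d_0$; it only re-proves that $d_0$ is a minimal cover (i.e.\ that $\Ker d_0\subseteq\rad\bP_0$). The cover property of $d_1$ means $\Ker d_1\subseteq\rad\bP_1$, which is about the kernel, not the image. The fix is exactly the rank argument you invoke: by Proposition~\ref{prop:3.1} together with $\dim_K\Ext^1_A(S_i,S_j)=\#\{\text{arrows }i\to j\}$, the module $\bP_1$ as defined \emph{is} the projective cover of $\Omega^1_{A^e}(A)$, and any epimorphism from the projective cover onto a module is automatically a cover (lift it through the cover map; the lift is onto, hence an isomorphism by finite length). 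With that substitution your proof is complete and is, as far as one can tell, the same argument carried out in the cited reference.
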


We will denote  the  homomorphism
$d_1 : \bP_1 \to \bP_0$ by $d$.
For the algebras $A$ we will consider, the kernel
$\Omega_{A^e}^2(A)$ of $d$ will be generated,
as an $A$-$A$-bimodule, by some elements of $\bP_1$
associated to a set of relations generating the
admissible ideal $I$.
Recall that a relation in the path algebra $KQ$
is an element of the form
\[
  \mu = \sum_{r=1}^n c_r \mu_r
  ,
\]
where $c_1, \dots, c_r$ are non-zero elements of $K$ and
$\mu_r = \alpha_1^{(r)} \alpha_2^{(r)} \dots \alpha_{m_r}^{(r)}$
are paths in $Q$ of length $m_r \geq 2$, $r \in \{1,\dots,n\}$,
having a common source and a common target.
The admissible ideal $I$ can be generated by a finite set
of relations in $K Q$ (see \cite[Corollary~II.2.9]{ASS}).
In particular, the bound quiver algebra $A = K Q/I$ is given
by the path algebra $K Q$ and a finite number of identities
$\sum_{r=1}^n c_r \mu_r = 0$ given by a finite set of generators of
the ideal $I$.
Consider the $K$-linear homomorphism $\pi : K Q \to \bP_1$
which assigns to a path $\alpha_1 \alpha_2 \dots \alpha_m$ in $Q$
the element
\[
  \pi(\alpha_1 \alpha_2 \dots \alpha_m)
   = \sum_{k=1}^m \alpha_1 \alpha_2 \dots \alpha_{k-1}
                  \otimes \alpha_{k+1} \dots \alpha_m
\]
in $\bP_1$, where $\alpha_0 = e_{s(\alpha_1)}$
and $\alpha_{m+1} = e_{t(\alpha_m)}$.
Observe that
$\pi(\alpha_1 \alpha_2 \dots \alpha_m) \in e_{s(\alpha_1)} \bP_1 e_{t(\alpha_m)}$.
Then, for a relation $\mu = \sum_{r=1}^n c_r \mu_r$
in $K Q$ lying in $I$, we have an element
\[
  \pi(\mu) = \sum_{r=1}^n c_r \pi(\mu_r) \in e_i \bP_1 e_j ,
\]
where $i$ is the common source and $j$ is the common
target of the paths $\mu_1,\dots,\mu_r$.
The following lemma shows that relations always
produce elements in the kernel of $d_1$;  the proof 
is straightforward.

\begin{lemma}
\label{lem:3.4}
Let $A = K Q/I$ be a bound quiver algebra and
$d_1 : \bP_1 \to \bP_0$ the homomorphism in $\mod A^e$
defined in Lemma~\ref{lem:3.3}. 
Then for any relation $\mu$ in $K Q$ lying in $I$,
we have $d_1(\pi(\mu)) = 0$.
\end{lemma}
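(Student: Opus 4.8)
The plan is to compute $d_1(\pi(\mu))$ directly by linearity and show it vanishes. Since $d_1$ and $\pi$ are both $K$-linear, it suffices to handle a single path $\mu_r = \alpha_1 \alpha_2 \dots \alpha_m$ and verify that $d_1(\pi(\alpha_1 \dots \alpha_m))$ depends only on the source $i = s(\alpha_1)$ and the target $j = t(\alpha_m)$, namely that it equals $\alpha_1 \dots \alpha_m \otimes e_j - e_i \otimes \alpha_1 \dots \alpha_m$. Once this is established, summing over the relation $\mu = \sum_{r=1}^n c_r \mu_r$ gives
\[
  d_1(\pi(\mu)) = \sum_{r=1}^n c_r \big( \mu_r \otimes e_j - e_i \otimes \mu_r \big)
                = \mu \otimes e_j - e_i \otimes \mu,
\]
which is zero in $\bP_0$ because $\mu \in I$ (so its image $\mu = 0$ in the algebra $A$, and hence $\mu \otimes e_j$ and $e_i \otimes \mu$ both vanish in $A e_j \otimes e_j A \subseteq \bP_0$).

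The core of the argument is therefore the single-path computation, which is a telescoping sum. First I would apply the definition of $\pi$ from the excerpt,
\[
  \pi(\alpha_1 \dots \alpha_m) = \sum_{k=1}^m \alpha_1 \dots \alpha_{k-1} \otimes \alpha_{k+1} \dots \alpha_m,
\]
and then apply $d_1$ termwise. By definition $d_1(e_{s(\alpha_k)} \otimes e_{t(\alpha_k)}) = \alpha_k \otimes e_{t(\alpha_k)} - e_{s(\alpha_k)} \otimes \alpha_k$, and extending $A$-$A$-bimodule-linearly, each summand contributes
\[
  \alpha_1 \dots \alpha_{k-1} \alpha_k \otimes \alpha_{k+1} \dots \alpha_m
   - \alpha_1 \dots \alpha_{k-1} \otimes \alpha_k \alpha_{k+1} \dots \alpha_m.
\]
The key observation is that the first term for index $k$ cancels the second term for index $k+1$: both equal $\alpha_1 \dots \alpha_k \otimes \alpha_{k+1} \dots \alpha_m$. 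This is the telescoping that collapses the whole sum, leaving only the uncancelled boundary terms at $k=1$ and $k=m$, which are precisely $\alpha_1 \dots \alpha_m \otimes e_j$ (from the first term at $k=m$, using $\alpha_{m+1} = e_{t(\alpha_m)} = e_j$) and $-e_i \otimes \alpha_1 \dots \alpha_m$ (from the second term at $k=1$, using $\alpha_0 = e_{s(\alpha_1)} = e_i$).

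There is no real obstacle here; as the excerpt already remarks, the proof is straightforward. The only points requiring mild care are bookkeeping: tracking the boundary conventions $\alpha_0 = e_{s(\alpha_1)}$ and $\alpha_{m+1} = e_{t(\alpha_m)}$ correctly so that the $k=1$ and $k=m$ boundary terms come out right, and confirming that all paths $\mu_r$ share the common source $i$ and common target $j$ (which is exactly the definition of a relation in $KQ$ recalled above). Given these, the telescoping identity and the final cancellation against $\mu \otimes e_j - e_i \otimes \mu = 0$ complete the proof.
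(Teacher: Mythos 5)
Your proof is correct and is precisely the ``straightforward'' telescoping computation that the paper alludes to but omits: each summand of $\pi(\mu_r)$ contributes $\alpha_1\cdots\alpha_k\otimes\alpha_{k+1}\cdots\alpha_m-\alpha_1\cdots\alpha_{k-1}\otimes\alpha_k\cdots\alpha_m$, the sum collapses to $\mu_r\otimes e_j-e_i\otimes\mu_r$, and summing over $r$ kills everything because $\mu=0$ in $A$. The only cosmetic slip is that $e_i\otimes\mu$ lies in the summand $Ae_i\otimes e_iA$ of $\bP_0$ rather than $Ae_j\otimes e_jA$; each boundary term vanishes in its own summand, so the conclusion is unaffected.
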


For an algebra $A = K Q/I$ in our context, we will see that
there exists a family of relations $\mu^{(1)},\dots,\mu^{(q)}$
generating the ideal $I$ such that the associated elements
$\pi(\mu^{(1)}), \dots, \pi(\mu^{(q)})$ generate
the $A$-$A$-bimodule $\Omega_{A^e}^2(A) = \Ker d_1$.
In fact, using Lemma~\ref{lem:3.2},
we will be able to show that
\[
  \bP_2 = \bigoplus_{j = 1}^q P\big(s(\mu^{(j)}),t(\mu^{(j)})\big)
        = \bigoplus_{j = 1}^q A e_{s(\mu^{(j)})} \otimes e_{t(\mu^{(j)})} A
        ,
\]
and the homomorphism $d_2 : \bP_2 \to \bP_1$ in $\mod A^e$ such that
\[
  d_2 \big(e_{s(\mu^{(j)})} \otimes e_{t(\mu^{(j)})}\big) = \pi(\mu^{(j)})
  ,
\]
for $j \in \{1,\dots,q\}$, defines a projective cover
of $\Omega_{A^e}^2(A)$ in $\mod A^e$.
In particular, we have $\Omega_{A^e}^3(A) \cong \Ker d_2$ in $\mod A^e$.
We will denote  this homomorphism $d_2$ by $R$.

For the next map $d_3 : \bP_3 \to \bP_2$, which we will call $S:=d_3$
later,
we do not have a general recipe. To define it, 
we need a set of minimal generators for 
$\Omega_{A^e}^3(A)$, and 
Proposition~\ref{prop:3.1}  tells us where we should look for them.

\section{Proof of Theorem~\ref{th:main1}}\label{sec:proof1}

Let $\Lambda = \Lambda(m,\lambda)$ for some $m \geq 2$ 
and $\lambda \in K$.
In this section we will study algebra properties of $\Lambda$, and in particular prove Theorem~\ref{th:main1}. 
The first results will be used to reduce calculations, and should also be of independent interest.

In order to construct a basis of $\Lambda$ with  good properties, we analyze the images of paths in $\Lambda$, they have very unusual properties.
We introduce some notation.
It follows from the relations defining $\Lambda$ that we may
define the elements
\begin{align*}
 {X}_1 &= \delta\eta\gamma = \nu\mu\alpha, 
& 
 {X}_2 &= \varrho\omega\beta = \varepsilon\xi\sigma, 
& 
 {X}_3 &= \alpha\nu\mu = \sigma\varepsilon\xi, 
\\
 {X}_4 &= \gamma\delta\eta = \beta\varrho\omega, 
& 
 {X}_5 &= \eta\gamma\delta = \xi\sigma\varepsilon, 
& 
 {X}_6 &= \omega\beta\varrho = \mu\alpha\nu, 
\end{align*}
given by products of the arrows around the shaded triangles.
Moreover, we define the elements
\begin{align*}
 \tilde{X}_2 &= \varepsilon\eta\beta, 
& 
 \tilde{X}_4 &= \beta\varepsilon\eta, 
& 
 \tilde{X}_5 &= \eta\beta\varepsilon. 
\end{align*}
The quiver $Q$ of $\Lambda$ 
has an automorphism $\varphi$ of order $3$, defined as follows.
Its action on vertices is given by the cycles
\[
 (5 \ 4 \ 2)(1 \ 6 \ 3)
\]
and the action on arrows is
\[
 (\delta \ \omega \  \sigma) (\eta \ \beta \ \varepsilon) (\gamma \ \rho \ \xi) (\nu \ \mu \ \alpha).
\]

\begin{lemma}
\label{lem:4.1} 
The action of $\varphi$ extends to an algebra automorphism of $\Lambda$.
\end{lemma}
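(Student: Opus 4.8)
The plan is to verify directly that the assignment $\varphi$ on vertices and arrows respects the defining relations of $\Lambda$, so that it induces a well-defined algebra homomorphism $\Lambda \to \Lambda$, and then to check it is bijective. Since $\Lambda = KQ/I$ is a bound quiver algebra, any permutation of the arrows that is compatible with the source/target maps extends to a graded algebra automorphism of the path algebra $KQ$; the only thing to establish is that this extension carries the ideal $I$ into itself. Because $\varphi$ is a bijection on arrows of finite order $3$, once we know $\varphi(I) \subseteq I$ we automatically get $\varphi(I) = I$ (applying $\varphi^3 = \mathrm{id}$), and hence $\varphi$ descends to an automorphism of $\Lambda$ of order dividing $3$.

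First I would confirm that $\varphi$ is consistent with the incidence structure of $Q$: for each arrow $\theta$ in the stated $4$-cycle decomposition, one checks that $s(\varphi(\theta)) = \varphi(s(\theta))$ and $t(\varphi(\theta)) = \varphi(t(\theta))$, using the vertex permutation $(5\ 4\ 2)(1\ 6\ 3)$. This is a finite, routine check over the nine named arrows and the three loops $\nu,\mu,\alpha$; it guarantees that $\varphi$ extends to an algebra endomorphism of $KQ$ preserving paths. Next I would turn to the relations. The key observation is that $\varphi$ was manufactured precisely to match the coherent orientation of the tetrahedron, so its action should permute the three blocks of relations cyclically. Concretely, I expect $\varphi$ to send each relation in the first column of the displayed relations to another relation in that same column (up to the cyclic shift of the index), and similarly for the other columns, with the same behaviour on the commutativity-type relations $\delta\eta = \nu\omega$, etc. To see this cleanly, I would note that $\varphi$ permutes the six elements $X_1,\dots,X_6$ and the auxiliary elements $\tilde X_2,\tilde X_4,\tilde X_5$, since these are products of arrows around the shaded triangles and $\varphi$ permutes those triangles; this reduces the relation-checking to tracking how $\varphi$ acts on a short list of paths rather than expanding each of the twelve plus one relation separately.

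The main obstacle, and the step requiring the most care, will be the relations in the first column, which contain the higher-order correction terms $\lambda(\beta\varrho\omega)^{m-1}\beta\varepsilon$ and its cyclic analogues. I would verify that $\varphi$ sends $\gamma\delta$ to $\beta\varrho$ (or whichever path the first relation demands), that it sends $\beta\varepsilon$ to the appropriate companion path, and crucially that it sends the correction term $(\beta\varrho\omega)^{m-1}\beta\varepsilon$ to the correction term of the image relation \emph{with the same scalar $\lambda$}. Because $\varphi$ permutes the $X_i$ and preserves path length, the power $(\,\cdot\,)^{m-1}$ is respected and the coefficient $\lambda$ is unchanged, so the three deformed relations are permuted among themselves and no new scalar is introduced. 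The final relations $(\theta f(\theta) f^2(\theta))^{m-1}\theta f(\theta) g(f(\theta)) = 0$, indexed by all arrows $\theta$, are handled uniformly: since $\varphi$ is an automorphism of the triangulation quiver $(Q,f)$ that also preserves the $g$-orbits (the white $3$-cycles), one has $\varphi f = f \varphi$ and $\varphi g = g \varphi$ as permutations of $Q_1$, whence $\varphi$ carries the relation indexed by $\theta$ to the relation indexed by $\varphi(\theta)$. This commutation of $\varphi$ with $f$ and $g$ is the conceptual heart of the argument and makes the whole family of arrow-relations transform correctly in one stroke. Granting these checks, $\varphi(I) = I$ follows, and $\varphi$ is the desired algebra automorphism of order $3$.
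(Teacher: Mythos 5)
Your proposal is correct and follows essentially the same route as the paper: extend $\varphi$ to an algebra endomorphism of $KQ$ and verify by direct computation that it permutes the defining relations (the paper checks, e.g., that the relation for $\gamma\delta$ is carried to the relation for $\varrho\omega$ via $\varphi(X_4)=X_2$). Your observation that $\varphi$ commutes with $f$ and $g$, which disposes of the whole family of zero relations at once, and your remark that $\varphi^3=\mathrm{id}$ upgrades $\varphi(I)\subseteq I$ to equality, are both valid and merely make explicit what the paper leaves as ``direct calculation.''
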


\begin{proof} 
We extend $\varphi$ to an algebra map of $K Q$. Then we must check that $\varphi$ preserves the relations, which is direct calculation. For example, 
\[
\varphi(\gamma\delta) = \varphi(\gamma)\varphi(\delta) = \rho\omega \ \ \mbox{ and } 
\  \varphi(\beta\varepsilon) = \varphi(\beta)\varphi(\varepsilon) = \varepsilon\eta,
\]
and 
\[
   \varphi(X_4) = \varphi(\gamma)\varphi(\delta)\varphi(\eta) = \rho\omega\beta = X_2.
\]
Hence,
$\varphi$ takes the relation 
for $\gamma\delta$ to the relation for $\rho\omega$. 
\end{proof}

\begin{lemma}
\label{lem:4.2} 
For each vertex $i$ of $Q$, 
the element $X_i^m$ belongs to the right socle of $\Lambda$.  
\end{lemma}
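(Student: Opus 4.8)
The plan is to prove the equivalent statement that $X_i^m\,a = 0$ for every arrow $a$ with $s(a) = i$. Indeed $X_i^m \in e_i\Lambda e_i$, and $e_i\rad\Lambda$ is generated as a right ideal by the arrows starting at $i$, so $X_i^m\,\rad\Lambda = 0$ if and only if $X_i^m\,a = 0$ for each such $a$; since every vertex of $Q$ is the source of exactly two arrows, this amounts to twelve identities, one per arrow of $Q$.

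First I would reduce the work with the automorphism $\varphi$ of Lemma~\ref{lem:4.1}. Since $\varphi(X_i) = X_{\varphi(i)}$ and an algebra automorphism preserves $\rad\Lambda$ and hence the right socle, the identity $X_{s(a)}^m\,a = 0$ for a single arrow $a$ yields the same identity for every arrow in the $\varphi$-orbit of $a$. The orbits of $\varphi$ on the arrow set are $(\delta\ \omega\ \sigma)$, $(\eta\ \beta\ \varepsilon)$, $(\gamma\ \varrho\ \xi)$ and $(\nu\ \mu\ \alpha)$, so it is enough to treat the four representatives $\delta,\eta,\gamma,\nu$, that is, to prove $X_1^m\delta = 0$, $X_5^m\eta = 0$, $X_4^m\gamma = 0$ and $X_1^m\nu = 0$.

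The mechanism is uniform. Writing $X_{s(a)} = a\,f(a)\,f^2(a)$ gives $X_{s(a)}^m\,a = X_{s(a)}^{m-1}\,a\,f(a)\,\big(f^2(a)\,a\big)$, and $f^2(a)\,a$ is exactly the left-hand side of one of the length-two commutation relations. I would first record that $g$ sends each arrow to the out-arrow at its target different from its $f$-image (this makes the four white $3$-cycles the $g$-orbits), and then observe that each of these relations has the form $f^2(a)\,a = g\big(f(a)\big)\,c$, with an extra summand $\lambda\,X^{m-1} g\big(f(a)\big)\,c$ in the three deformed cases. Substituting the leading term produces $X_{s(a)}^{m-1}\,a\,f(a)\,g\big(f(a)\big)\,c$, whose initial factor $X_{s(a)}^{m-1}\,a\,f(a)\,g\big(f(a)\big)$ is precisely the left-hand side of the long relation $\big(\theta f(\theta)f^2(\theta)\big)^{m-1}\theta f(\theta)g\big(f(\theta)\big) = 0$ with $\theta = a$, hence is zero, so this term vanishes. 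For the deformed orbit $(\delta\ \omega\ \sigma)$ it suffices to treat $\delta$, where there remains a term $\lambda\,X_1^{m-1}\,\delta\eta\,X_4^{m-1}\,\beta\varepsilon$. Here I would apply the absorption identity $\delta\eta\,X_4^{m-1} = X_1^{m-1}\,\delta\eta$ — itself immediate from the single step $\delta\eta\,X_4 = \delta\eta\gamma\delta\eta = X_1\,\delta\eta$ — to rewrite the term as $\lambda\,X_1^{2m-2}\,\delta\eta\beta\varepsilon = \lambda\,X_1^{m-1}\big(X_1^{m-1}\delta\eta\beta\big)\varepsilon$, and then invoke the long relation a second time (now $X_1^{m-1}\delta\eta\beta = 0$) to see it is zero.

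I expect the only genuine difficulty to lie in this deformed case: one must verify that the extra $\lambda$-term is killed, which needs both the absorption identity and a second application of the long relation rather than a single matching. The other three cases reduce immediately once the identity $f^2(a)\,a = g\big(f(a)\big)\,c$ is lined up with the corresponding long relation, so the remainder of the argument is careful bookkeeping of the arrows, the orbits of $\varphi$, and the explicit form of $g$.
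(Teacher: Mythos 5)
Your argument is correct, but it takes a genuinely different route from the paper's. You expand $X_{s(a)}^m a$ using the version of $X_{s(a)}$ that \emph{begins} with $a$, so that the final two letters form $f^2(a)a$, which you then rewrite via the length-two commutation relations; this forces a case distinction between deformed and undeformed relations and, for the orbit of $\delta$, an extra step killing the $\lambda$-term via the absorption identity $\delta\eta X_4^{m-1}=X_1^{m-1}\delta\eta$ and a second application of the long zero relation. The paper instead uses the \emph{other} version of $X_i$: writing $\bar a$ for the second arrow starting at $i=s(a)$, one has $X_i=\bar a\, f(\bar a)\, f^2(\bar a)$, and a single cyclic shift gives
\[
X_i^m a=\bar a\,\bigl(f(\bar a)\, f^2(\bar a)\,\bar a\bigr)^{m-1} f(\bar a)\, f^2(\bar a)\, a
=\bar a\,\bigl(\theta f(\theta) f^2(\theta)\bigr)^{m-1}\theta f(\theta)\, g\bigl(f(\theta)\bigr)=0
\]
with $\theta=f(\bar a)$, since $g(f(\theta))=\overline{f^3(\bar a)}=\overline{\bar a}=a$; the paper's worked example is $X_1^m\delta=\nu(\mu\alpha\nu)^{m-1}\mu\alpha\delta=0$. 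This matches the entire expression against one monomial relation, uniformly for all twelve arrows, with no case analysis and no contact with the $\lambda$-terms; the only other input is the equality of the two versions of $X_i$. Your version costs more bookkeeping but is complete: the reduction to the representatives $\delta,\eta,\gamma,\nu$ via $\varphi(X_i)=X_{\varphi(i)}$ is sound, the identification of $g(f(a))$ as the leading letter of the right-hand side of each commutation relation is correct, and the handling of the deformed case checks out (using $2(m-1)\geq m-1$ so the long relation applies again).
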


\begin{proof} 
It follows from the relations that, 
for each arrow $\theta$ in $Q$,
we have $X_{s(\theta)}^m \theta = 0$.  
For example, we have 
\[ 
  X_1^m\delta = (\nu\mu\alpha)^m\delta 
  = \nu (\mu\alpha\nu)^{m-1}\mu\alpha\delta = \nu (X_{s(\mu)})^{m-1}\mu f(\mu)g\big(f(\mu)\big) = 0.
\]
\end{proof}

\begin{lemma}
\label{lem:4.3} 
We have the following equalities in $\Lambda$.
\begin{enumerate}[(i)]
 \item
  $X_1 = \nu\omega\gamma = \delta\xi\alpha$, 
  $X_3 = \sigma\varrho\mu = \alpha\delta\xi$, 
  $X_6 = \mu\sigma\varrho = \omega\gamma\nu$. 
 \item
  $X_2 = \varrho\varrho\sigma$, 
  $X_4 = \gamma\nu\omega$, 
  $X_5 = \xi\alpha\delta$.
 \item
  $X_2 = \tilde{X}_2 + \lambda X_2^m$, 
  $X_4 = \tilde{X}_4 + \lambda X_4^m$, 
  $X_5 = \tilde{X}_5 + \lambda X_5^m$.
 \item
  $X_2^m = (\tilde{X}_2)^m$, 
  $X_4^m = (\tilde{X}_4)^m$, 
  $X_5^m = (\tilde{X}_5)^m$.
\end{enumerate}
\end{lemma}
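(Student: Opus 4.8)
The plan is to use the order-three automorphism $\varphi$ of Lemma~\ref{lem:4.1} to reduce each of the four parts to a single representative identity, and to derive that identity from the defining relations together with Lemma~\ref{lem:4.2}. Since $\varphi$ permutes the vertices by $(5\ 4\ 2)(1\ 6\ 3)$, it induces the cyclic permutations $X_1 \mapsto X_6 \mapsto X_3 \mapsto X_1$ and $X_5 \mapsto X_4 \mapsto X_2 \mapsto X_5$ of the cyclic elements, and from its action on arrows one checks that it carries each expression occurring in the identity for $X_1$ (resp.\ $X_4$) to the corresponding expression for $X_6$ and $X_3$ (resp.\ $X_2$ and $X_5$); in particular $\varphi(\tilde{X}_4) = \tilde{X}_2$ and $\varphi^2(\tilde{X}_4) = \tilde{X}_5$. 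Thus in (i) it is enough to treat $X_1$, and in (ii)--(iv) it is enough to treat $X_4$, the remaining identities following by applying $\varphi$ and $\varphi^2$.

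For parts (i) and (ii) only the relations not involving $\lambda$ are needed, and each identity is a single substitution. For instance, from $X_1 = \delta\eta\gamma$ and $\delta\eta = \nu\omega$ we get $X_1 = \nu\omega\gamma$, and from $X_1 = \nu\mu\alpha$ and $\nu\mu = \delta\xi$ we get $X_1 = \delta\xi\alpha$; similarly $X_4 = \gamma\delta\eta = \gamma\nu\omega$ using $\delta\eta = \nu\omega$. Applying $\varphi$ and $\varphi^2$ then produces the stated identities for $X_6, X_3$ and for $X_2, X_5$.

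The heart of the matter is part (iii). Here I would start from the deformed relation $\gamma\delta = \beta\varepsilon + \lambda(\beta\varrho\omega)^{m-1}\beta\varepsilon$ and the equality $\beta\varrho\omega = X_4$ to write $X_4 = \gamma\delta\eta = \beta\varepsilon\eta + \lambda X_4^{m-1}\beta\varepsilon\eta = \tilde{X}_4 + \lambda X_4^{m-1}\tilde{X}_4$. It then suffices to show $X_4^{m-1}\tilde{X}_4 = X_4^m$, and this is exactly where Lemma~\ref{lem:4.2} enters: $X_4^m$ lies in the right socle, so $X_4^m\theta = 0$ for every arrow $\theta$ with source $4$, whence $X_4^m\beta = 0$ and $X_4^{m+1} = 0$. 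Substituting $\tilde{X}_4 = X_4 - \lambda X_4^{m-1}\tilde{X}_4$ into $X_4^{m-1}\tilde{X}_4$ gives $X_4^{m-1}\tilde{X}_4 = X_4^m - \lambda X_4^{2m-2}\tilde{X}_4$, and the last term vanishes since $X_4^{2m-2}\beta = X_4^{m-2}(X_4^m\beta) = 0$ for $m \geq 2$. Hence $X_4 = \tilde{X}_4 + \lambda X_4^m$, and $\varphi, \varphi^2$ give the analogues for $X_2$ and $X_5$.

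Finally, part (iv) follows formally from (iii). Writing $\tilde{X}_4 = X_4 - \lambda X_4^m$ and using that powers of $X_4$ commute, the binomial theorem gives $\tilde{X}_4^{\,m} = \sum_{k=0}^{m} \binom{m}{k}(-\lambda)^k X_4^{\,m+(m-1)k}$; every term with $k \geq 1$ has $X_4$-exponent at least $m+1$ and so vanishes by $X_4^{m+1} = 0$, leaving $\tilde{X}_4^{\,m} = X_4^m$, and $\varphi, \varphi^2$ finish. The only genuine obstacle is the bookkeeping in (iii) and (iv): one must verify that the deformation terms collapse to a single power $X_4^m$ and that all higher powers are annihilated by the socle condition of Lemma~\ref{lem:4.2}. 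Everything else is routine substitution, organized efficiently by the symmetry $\varphi$.
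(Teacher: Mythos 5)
Your proof is correct and follows essentially the same strategy as the paper: reduce to one representative per $\varphi$-orbit, get (i)--(ii) by direct substitution, derive (iii) by expanding the deformed relation and killing the extra term with the socle property of Lemma~\ref{lem:4.2}, and deduce (iv) by expanding $(X_4-\lambda X_4^m)^m$ and using $X_4^{m+1}=0$. The only cosmetic difference is that the paper works with the representative $X_2=\varrho\omega\beta$ and eliminates $X_2^{m-1}\varepsilon\eta\beta$ via the relation for $\xi\sigma$, whereas you work with $X_4=\gamma\delta\eta$ and eliminate the correction term by re-substituting $\tilde{X}_4=X_4-\lambda X_4^{m-1}\tilde{X}_4$ into itself; both are valid.
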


\begin{proof} 
The equalities in (i) and (ii) follow directly from the relations
defining $\Lambda$.  
For (iii), observe that the vertices $2$, $4$, $5$
are in one orbit of the autmorphism $\varphi$. 
Hence, it is enough to show that
$X_2 = \tilde{X}_2 + \lambda X_2^m$.
We have
\[
  X_2 = \varepsilon\xi\sigma = \rho\mu\sigma = \rho\omega\beta .
\]
Moreover
\[
 \rho\omega\beta
 = (\varepsilon\eta + \lambda X_2^{m-1}\varepsilon\eta)\beta 
 = \tilde{X}_2 + \lambda X_2^{m-1}\varepsilon\eta\beta
\]
and 
\[
 X_2^{m-1}\varepsilon\eta\beta
 = X_2^{m-1}\varepsilon(\xi\sigma - \lambda X_5^{m-1}\eta\beta)
 = X_2^{m-1}\varepsilon\xi\sigma
 = X_2^m.
\]
The equalities in (iv) follow from the equalities in (iii)
and the fact that 
$X_2^m$, $X_4^m$, $X_5^m$ are in the socle of $\Lambda$.  
\end{proof}

\begin{lemma}
\label{lem:4.4} 
For vertices $i \neq j$ in $Q$, any two paths of length $3$
from $i$ to $j$ are equal and non-zero in $\Lambda$.  
\end{lemma}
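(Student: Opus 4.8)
The plan is to first extract the combinatorics of length-$3$ paths from the $2$-regularity of $Q$, reducing to two representative ordered pairs via the automorphism $\varphi$; then to prove the equalities by relation-chasing; and finally to reduce the non-vanishing claim to the non-vanishing of the socle elements $X_i^m$, which I expect to be the real obstacle.

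Since every vertex of $Q$ emits exactly two arrows, there are exactly $2^3 = 8$ paths of length $3$ starting at any vertex. Enumerating them from vertex $1$ shows that all eight terminate at $1$ or at $2$, four at each; explicitly the four paths $1\to 2$ are $\delta\xi\sigma$, $\delta\eta\beta$, $\nu\omega\beta$, $\nu\mu\sigma$. By Lemma~\ref{lem:4.1} the automorphism $\varphi$ permutes length-$3$ paths, and on vertices it acts as $(5\,4\,2)(1\,6\,3)$; hence the ordered pairs $(i,j)$ with $i \neq j$ that admit length-$3$ paths are exactly the two $\varphi$-orbits $\{(1,2),(6,5),(3,4)\}$ and $\{(5,6),(4,3),(2,1)\}$, each carrying precisely four such paths (no pair $i\neq j$ carries only one). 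It therefore suffices to treat the representatives $1\to 2$ and $5\to 6$.

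For $5\to 6$ the four paths $\xi\sigma\varrho$, $\xi\alpha\nu$, $\eta\gamma\nu$, $\eta\beta\varrho$ are identified using only the commutativity relations: $\sigma\varrho=\alpha\nu$ gives $\xi\sigma\varrho=\xi\alpha\nu$, then $\xi\alpha=\eta\gamma$ gives $\xi\alpha\nu=\eta\gamma\nu$, and $\gamma\nu=\beta\varrho$ gives $\eta\gamma\nu=\eta\beta\varrho$. For $1\to 2$ the relations $\nu\mu=\delta\xi$ and $\nu\omega=\delta\eta$ collapse the four paths to $\delta\xi\sigma$ and $\delta\eta\beta$, and the remaining identification $\delta\xi\sigma=\delta\eta\beta$ is the one place a deformed relation enters. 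Using $\xi\sigma=\eta\beta+\lambda X_5^{m-1}\eta\beta$ one gets $\delta\xi\sigma=\delta\eta\beta+\lambda\,\delta X_5^{m-1}\eta\beta$, and the correction term vanishes because $\delta X_5^{m-1}=\delta(\eta\gamma\delta)^{m-1}=(\delta\eta\gamma)^{m-1}\delta=X_1^{m-1}\delta$, so that $\delta X_5^{m-1}\eta\beta=X_1^{m-1}\delta\eta\beta$; and the latter is exactly the zero-relation $(\theta f(\theta)f^2(\theta))^{m-1}\theta f(\theta)g(f(\theta))=0$ for $\theta=\delta$, since $f(\delta)=\eta$ and $g(f(\delta))=g(\eta)=\beta$. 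This settles the equality assertion for every pair $i\neq j$.

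For non-vanishing it is then enough, by the equalities just proved, to check one path per orbit, say $\delta\eta\beta$ and $\eta\gamma\nu$, and I would reduce each to the non-vanishing of a socle power by building it up into $X_i^2$. For instance $\beta\varepsilon=\gamma\delta-\lambda X_4^{m-1}\beta\varepsilon$ together with $\delta\eta X_4^{m-1}=X_1^{m-1}\delta\eta$ and the same zero-relation gives $\delta\eta\beta\varepsilon=X_1\delta$, whence $\delta\eta\beta\cdot\varepsilon\eta\gamma=X_1\,\delta\eta\gamma=X_1^2$; symmetrically $\nu\omega=\delta\eta$ yields $\eta\gamma\nu\cdot\omega\gamma\delta=(\eta\gamma\delta)^2=X_5^2$. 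Since $m\geq 2$, the identity $X_i^m=X_i^2\,X_i^{m-2}$ shows $X_i^m\neq 0$ forces $X_i^2\neq 0$, and hence $\delta\eta\beta\neq 0$ and $\eta\gamma\nu\neq 0$. The genuine obstacle is thus the non-vanishing of the elements $X_i^m$, which by Lemma~\ref{lem:4.2} lie in the right socle of $\Lambda$: this is precisely the point at which one must control the \emph{size} of $\Lambda$ and not merely its relations. I would obtain it from the non-degeneracy of the symmetric form of Theorem~\ref{th:main1} — equivalently from the explicit $36m$-dimensional basis of $\Lambda$, in which each $X_i^m$ is the nonzero generator of $\soc P_i$ — so that the length-$3$ paths, being left divisors of a nonzero socle element, cannot themselves vanish.
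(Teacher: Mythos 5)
Your treatment of the equalities is correct and is essentially the paper's own argument: the same enumeration showing length-$3$ paths only join ``opposite'' vertices, the same reduction to representatives via $\varphi$, and for $1\to 2$ the same computation $\delta\xi\sigma=\delta\eta\beta+\lambda\,\delta X_5^{m-1}\eta\beta$ with $\delta X_5^{m-1}=X_1^{m-1}\delta$ and the zero relation for $\theta=\delta$ killing the correction term. (Your second representative $5\to 6$ lies in the same $\varphi$-orbit as the paper's $2\to 1$, and your observation that this case needs only the commutativity relations is accurate.)

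The gap is in the non-vanishing claim. Your reduction of $\delta\eta\beta\neq 0$ and $\eta\gamma\nu\neq 0$ to $X_1^2\neq 0$ and $X_5^2\neq 0$ via $\delta\eta\beta\cdot\varepsilon\eta\gamma=X_1^2$ and $\eta\gamma\nu\cdot\omega\gamma\delta=X_5^2$ is correct, and so is the step $X_i^m\neq 0\Rightarrow X_i^2\neq 0$. But you then close the argument by invoking the non-degenerate symmetric form of Theorem~\ref{th:main1}, equivalently the $36m$-dimensional basis $\cB$ in which $X_i^m$ spans $\soc(e_i\Lambda)$. In the paper those facts are established \emph{after} and \emph{by means of} Lemmas~\ref{lem:4.4} and~\ref{lem:4.5}: the basis properties (in particular that $b\hat{b}=\omega_i$ for a unique $\hat b$, which is what makes the Gram matrix non-singular) are proved using exactly the statements you are trying to establish. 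So as written your final step is circular. The paper instead disposes of non-vanishing with the (terse) observation, made in the proof of Lemma~\ref{lem:4.5}(i), that every zero relation of $\Lambda$ has length $3m$ and the listed relations are minimal, so no path of length $\leq 3m-1$ can vanish; your argument would be complete if you replaced the appeal to Theorem~\ref{th:main1} by an independent lower bound on $\dim_K\Lambda$ forcing $X_i^m\neq 0$ (e.g.\ a normal-form/spanning-plus-independence argument for $\cB$ that does not presuppose this lemma, or an explicit module on which $X_i^m$ acts non-trivially).
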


\begin{proof} 
Consider paths of length three between different vertices $i$ and $j$ in $Q$. 
Such paths only exist if the vertices are ``opposite'', and
because of the automorphism $\varphi$, we may assume 
that $\{ i, j\} = \{ 1, 2\}$. 
Concerning paths from $1$ to $2$ we have
\[
 \delta\eta\beta = \delta\xi\sigma - \lambda\delta X_5^{m-1}\eta\beta.
\]
Now, $\delta X_5 = \delta\eta\gamma\delta = X_1\delta$ and therefore
\[
 \delta X_5^{m-1}\eta\beta
 = X_1^{m-1}\delta\eta\beta
 = X_{s(\delta)}^{m-1}\delta f(\delta)g\big(f(\delta)\big)
 = 0.
\]
With this, we have
\[
 \delta\eta\beta = \delta\xi\sigma = \nu\mu\sigma = \nu\omega\beta ,
\]
as required. 
A similar calculation shows that all paths from 2 to 1 of length three 
are equal in $\Lambda$. 
\end{proof}

\begin{lemma}
\label{lem:4.5} 
The following statements hold:
\begin{enumerate}[(i)]
 \item
  For $4 \leq k \leq 3m - 1$, any two paths of length $k$
  between two vertices in $Q$ are equal and non-zero in $\Lambda$.  
 \item
  For $k = 3m$, any path of length $k$
  between two different vertices is zero in $\Lambda$.  
 \item
  For $k = 3m$, any cycle of length $k$ around a vertex $i$
  is equal to $X_i^m$.  
 \item
  For $k > 3m$, any path of length $k$
  is zero in $\Lambda$.  
\end{enumerate}
\end{lemma}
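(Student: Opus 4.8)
The plan is to establish (i)--(iii) by a single induction on $k$ and then obtain (iv) as a formal corollary. Two structural facts drive the argument. First, the order-three automorphism $\varphi$ of Lemma~\ref{lem:4.1} permutes paths of a given length, so it cuts the number of cases to be checked by three. Second, and crucially, each $X_i$ is given by two expressions (its two defining products, one around each shaded triangle through $i$), one beginning with each of the two arrows leaving $i$; comparing these yields the \emph{sliding identity} $X_{s(\theta)}\theta=\theta X_{t(\theta)}$ for every arrow $\theta$, and hence $X_i^{p}w=wX_{t(w)}^{p}$ for every path $w$ with source $i$ and every $p\ge 0$. Together with Lemma~\ref{lem:4.2} this shows that $X_i^{m}$ lies in the \emph{two-sided} socle, so $X_i^{m}w=0=wX_j^{m}$ for every non-trivial path $w$.

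For (i) I induct upward from the base case $k=3$ provided by Lemma~\ref{lem:4.4}. Given two paths of length $k+1$ from $i$ to $j$, I factor each as an arrow out of $i$ followed by a path of length $k$; by the inductive hypothesis the tail is determined by its first arrow, so only the two choices of initial arrow at $i$ must be reconciled. Since $Q$ is $2$-regular, the two length-two paths to any common second neighbour are identified by a commutativity relation, and these are homogeneous except for the three relations on $\gamma\delta,\varrho\omega,\xi\sigma$ and their $\varphi$-images. Each inhomogeneous relation contributes a correction of the special shape $\lambda X_a^{m-1}u$ with $u$ a path of length $k+1\ge 4$, and \textbf{controlling this correction is the crux of the whole lemma}.

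To kill the correction I slide the first arrow of $u$ past $X_a^{m-1}$ and apply the inductive hypothesis to the remaining length-$k$ factor, reducing it to $\theta\,X_b^{m-1}c$ with $c$ a canonical path of length $\ge 3$. I therefore carry through the induction the auxiliary vanishing statement $X_b^{m-1}c=0$; sliding reduces this to a single instance of the zero relations $X_{s(\theta)}^{m-1}\theta f(\theta)g(f(\theta))=0$, the secondary terms carrying a factor $X^{m}$ being absorbed by Lemma~\ref{lem:4.2}. This is exactly the point where the zero relations enter, and it decouples (i) from the threshold, so no circularity arises. For the threshold $k=3m$: a length-$3m$ path between distinct vertices is brought, via (i) at length $3m-1$ and one commutativity step, to the form $X_i^{m-1}\cdot(\text{non-cyclic length-}3)$, a zero relation, giving (ii); a length-$3m$ cycle at $i$ is brought (choosing the expression of $X_i^{m}$ ending in the correct in-arrow) to $X_i^{m-1}\cdot(\text{length-}3\text{ cycle at }i)$, which equals $X_i^{m}$ --- directly, or via $X_i^{m-1}\tilde X_i=X_i^{m}-\lambda X_i^{2m-1}=X_i^{m}$ using Lemma~\ref{lem:4.3}(iii) together with $X_i^{2m-1}=0$ --- giving (iii). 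Finally (iv) is formal: a path of length $>3m$ splits after $3m$ steps into $X_i^{m}$ or $0$ followed by a non-trivial factor, hence vanishes by Lemma~\ref{lem:4.2}.

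It remains to see that the single candidate path produced in (i) is non-zero. Completing it by a path of the complementary length to a cycle of length $3m$ at its source and invoking (iii), the product equals $X_i^{m}$; thus every path in (i) is non-zero provided $X_i^{m}\neq 0$. The non-vanishing of the top socle element $X_i^{m}$ is the one ingredient not internal to this chain of lemmas, and I expect to supply it separately --- either from the non-degenerate symmetric form on $\Lambda$ or, equivalently, from the matching lower bound $\dim_K\Lambda\ge 36m$.
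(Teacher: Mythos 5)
Your argument for the equality statements is essentially the paper's: an induction on the length $k$ starting from Lemma~\ref{lem:4.4}, with the automorphism $\varphi$ cutting down the cases, the rotation identity $X_{s(\theta)}\theta=\theta X_{t(\theta)}$ (which the paper uses implicitly, e.g.\ $\delta X_5=X_1\delta$ in the proof of Lemma~\ref{lem:4.4}) to move powers of the $X_i$ past arrows, and the zero relations $X_{s(\theta)}^{m-1}\theta f(\theta)g(f(\theta))=0$ to kill the inhomogeneous corrections $\lambda X_a^{m-1}u$; the threshold cases (ii)--(iii) and the formal deduction of (iv) also match. The paper simply works the $k=4$ step out vertex by vertex rather than stating the sliding identity once, so your version is a slightly more uniform packaging of the same computation, and your treatment of the correction terms (non-cyclic length-$3$ head gives a zero relation, cyclic head gives a factor $X^m$ absorbed by Lemma~\ref{lem:4.2}) is exactly what is needed.

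The one place you genuinely diverge is the non-vanishing claim in (i), and there your proposed fix is circular as stated. You reduce non-vanishing of all shorter paths to $X_i^m\neq 0$ and offer to import that either from the non-degenerate symmetric form or from the lower bound $\dim_K\Lambda\geq 36m$. But in this paper both of those are \emph{downstream} of Lemma~\ref{lem:4.5}: the basis $\cB$, the dimension count of Corollary~\ref{cor:4.6}, and the bilinear form of Theorem~\ref{th:4.7} are all built from the description of paths established in this lemma. The paper instead disposes of non-vanishing directly, by observing that every zero relation has length $3m$ and the listed relations are minimal, so no path of length $\leq 3m-1$ can be forced to zero. (That argument is itself terse, but it does not presuppose the later results.) If you want to keep your route, you need an independent lower bound on $\dim_K\Lambda$ --- e.g.\ a normal-form/covering argument or an explicit module realization --- stated before this lemma; otherwise you should replace this step by the paper's minimality observation.
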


\begin{proof} 
For the following, we  write $X_{ij}$ for a path of length three between 
vertices $i\neq j$. 
We first show that any two paths of length four
between two fixed vertices are equal. 
For this, it suffices to consider paths starting at $1$ 
and paths starting at $2$. 

\smallskip
 
 (i1) 
Paths from $1$ of length four must end at vertex $5$ 
or vertex $6$. 
Consider paths ending at $5$. 
Such a path either
 ends with arrow $\delta$ or it ends with arrow $\varepsilon$. 
If it ends with $\delta$ then it is the product of a cyclic path of length 
three from $1$ to $1$ with $\delta$, hence by 
Lemma~\ref{lem:4.3}, 
is equal to $X_1\delta$.
Similarly, any path of length four 
from $1$ ending with $\varepsilon$ is the product of a path 
of length three from $1$ to $2$ with $\varepsilon$, 
hence is equal in $\Lambda$ to $X_{12}\varepsilon$. 
We must show that $X_1\delta = X_{12}\varepsilon$. 
We have
\[
 X_1\delta 
 = \nu\mu\alpha\delta
 = \nu\mu\sigma\varepsilon
 = X_{12}\varepsilon.
\]
Similarly, any path of length four from 1 to 6 ends with arrow $\rho$ 
or with arrow $\nu$, and one shows as above that
all are equal in $\Lambda$. 

\smallskip
 
(i2) 
Consider paths of length four starting at vertex $2$, 
any such path ends at vertex $6$ or vertex $5$. 
Consider paths ending  at vertex $6$, 
the last arrow in such a path is $\nu$ or $\rho$. 
If it ends with $\nu$ then the path is of the form 
$X_{12}\nu$, and if it ends with $\rho$ then it is either $X_2\rho$, 
or it is $\tilde{X}_2\rho$. 
We have
\[
  \tilde{X}_2\rho - (X_2 - \lambda X_2^m)\rho = X_2\rho
\]
(noting that $X_2^m$ is in the right socle of $\Lambda$). Moreover, 
\[
  X_2\rho = \rho\omega\beta\rho = \rho\omega\gamma\nu = X_{12}\nu.
\]
For paths ending at vertex $5$ the proof is similar.

\smallskip

We finish the proof of (i)  by induction on $k$, 
using arguments as for the case $k=4$. 
Note that all paths of length $\leq 3m-1$ 
in $Q$ are non-zero in $\Lambda$ since all zero relations 
of $\Lambda$ have length $3m$ 
(and since the relations as listed are minimal).

\smallskip

We prove now the statements (ii) and (iii).
It suffices again to consider paths starting at 1 and paths starting at $2$.
A cyclic path starting at $1$ of length $3m$ is of the form 
$Y\gamma$ or $Y'\alpha$, where $Y$ ends at vertex $4$
and $Y'$ ends at vertex $3$. 
By part (i) we can take $Y=X_1^{m-1}\delta\eta$ 
and then $Y\gamma = X_1^m$. 
As well we can take $Y'=X_1^{m-1}\nu\mu$ and 
get $Y'\alpha = X_1^m$. 
Similarly, any path of length $3m$ from $2$ to $2$ 
is equal to $X_2^m$.
Now consider a path from vertex $1$ of length $3m$ 
which does not end 
at vertex $1$, then it must end at vertex $2$. 
It is  of the form $Y\beta$ with $Y$ from $1$ to $4$, 
or of the form $Y'\sigma$ with $Y'$ from $1$ to $3$. 
By part (i) we can take
$Y=X_1^{m-1}\delta\eta$ and then 
\[
  Y\beta = X_{s(\delta)}^{m-1}\delta f(\delta)g\big(f(\delta)\big) = 0.
\]
We also can take $Y' = X_1^{m-1}\nu\mu$ and then again, by the defining relations, $Y'\sigma = 0$. 
Finally, consider a path from vertex $2$ of length $3m$ 
which does not end at vertex $2$, then it must end at vertex $1$. 
Such a path is either of
the form $Y\gamma$, or of the form $Y'\alpha$, where $Y$ and $Y'$ 
are paths of length $3m-1$. 
We can take $Y=X_2^{m-1}\rho\omega$ and then $Y\gamma=0$, 
by the defining relations. 
Similarly, we can take $Y'=X_2^{m-1}\varepsilon\xi$ 
and then $Y'\alpha=0$, by the defining relations.

\smallskip

The statement (iv) follows because $X_i^m$ is in the right socle 
of $\Lambda$, for any vertex $i$ of $Q$.
\end{proof}

We present now a basis of $\Lambda$ with good properties.
We fix a vertex $i$, and define a basis $\cB_i$ of $e_i\Lambda$ as follows.
Choose a version of $X_i$, then suppose $X_i$ starts with $\tau$, 
then let $\bar{\tau}$ be the other arrow starting at $i$.
Now let 
$\cB_i:= $ the set of   all initial subwords of  $ X_i^m$ together with the set
\[
 \Big\{ X_i^k\bar{\tau}, X_i^k\bar{\tau}f(\bar{\tau}) : 0\leq k\leq m-1\Big\}
 \cup \Big\{ X_i^k\tau f(\tau)g\big(f(\tau)\big): 0\leq k < m-1 \Big\}.
\]
Then $\cB_i$ is a basis for $e_i\Lambda$, 
and we take $\cB:= \cup_{i\in Q_0} \cB_i$. 
For each vertex $i$, let $\omega_i:= X_i^m$, this spans the socle of $e_i\Lambda$,
by Lemma~\ref{lem:4.5},
and it lies in $\cB$. 
The basis $\cB$ has the following properties:
 \begin{enumerate}[(a)]
 \item
   For each $k$ with $1\leq k\leq 3m-1$ the set $\cB_i$ 
   contains precisely two elements of length $k$. 
   The end vertices
   are determined by the congruence of $k$ modulo $3$. 
 \item
   Any path of length $k$ for $4\leq k \leq 3m-1$ is equal 
   to precisely one basis element, 
   as well any path of length three, except  the cyclic paths 
   between vertices $2, 4, 5$.
 \item
   The product of two elements $b, b'$ from $\cB$ is either zero, 
   or is again an element in $\cB$. 
   It is non-zero if and only if $t(b)= s(b')$ 
   and  $bb'$ has  length $\leq 3m$,
   and if the length is $3m$ then $s(b) = t(b')$. 
(For this, note that the cyclic paths of length three through the vertices 
$2, 5, 4$ are not products of basis elements.)
 \item
   For each $b\in \cB_i$ there is a unique $\hat{b}\in \cB$ 
   such that $b\hat{b} = \omega_i$: 
   Say $b=be_j$of length $k$, then $\cB_j$ must contain 
   a unique element $\hat{b}$ of length $3m-k$ and moreover
   which ends at $i$. 
   This is seem by checking through each congruence. 
   Then $b\hat{b}$ is a path of length $3m$
   from $i$ to $i$ and it must therefore be equal 
   to $\omega_i$, by Lemma~\ref{lem:4.5}.
   It must be unique with $bb'=\omega_i$ and $b'\in \cB$.
\end{enumerate}

\begin{corollary}
\label{cor:4.6} 
$\Lambda$ has dimension $36m$.
\end{corollary}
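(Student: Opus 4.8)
The plan is to read the dimension directly off the basis $\cB = \bigcup_{i \in Q_0}\cB_i$ constructed above. Since each $\cB_i$ is a $K$-basis of $e_i\Lambda$ and $\Lambda = \bigoplus_{i \in Q_0} e_i\Lambda$ as a $K$-vector space, we have $\dim_K\Lambda = \sum_{i\in Q_0}|\cB_i|$. As $Q$ has six vertices, it therefore suffices to establish $|\cB_i| = 6m$ for each vertex $i$ and then multiply by $6$.

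To compute $|\cB_i|$ I would count the four families making up $\cB_i$ and check that they are pairwise disjoint. The initial subwords of $X_i^m$ give exactly one element of each length $0,1,\dots,3m$, hence $3m+1$ elements; the family $\{X_i^k\bar{\tau} : 0\le k\le m-1\}$ gives $m$ elements (of lengths $\equiv 1$ modulo $3$), the family $\{X_i^k\bar{\tau} f(\bar{\tau}): 0\le k\le m-1\}$ gives another $m$ (of lengths $\equiv 2$ modulo $3$), and $\{X_i^k\tau f(\tau)g(f(\tau)) : 0\le k< m-1\}$ gives a further $m-1$ (of lengths $\equiv 0$ modulo $3$, between $3$ and $3m-3$). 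Disjointness is straightforward: the first and fourth families consist of words beginning with $\tau$ while the second and third begin with $\bar{\tau}$; the second and third occupy the length classes $\equiv 1$ and $\equiv 2$ modulo $3$ and so never coincide; and at a common length $3(k+1)$ the prefix $X_i^{k+1}$ of the first family ends in the third arrow $f^2(\tau)$ of $X_i$, whereas the fourth-family word $X_i^k\tau f(\tau)g(f(\tau))$ ends in $g(f(\tau))\neq f^2(\tau)$. Summing gives
\[
  |\cB_i| = (3m+1) + m + m + (m-1) = 6m.
\]

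As a consistency check I would recount by stratifying over lengths using property~(a): the set $\cB_i$ contains precisely two elements of each length $k$ with $1\le k\le 3m-1$, together with the trivial path $e_i$ of length $0$ and the socle generator $\omega_i = X_i^m$ of length $3m$, while Lemma~\ref{lem:4.5}(iv) rules out any nonzero path of length exceeding $3m$. This again yields $|\cB_i| = 2(3m-1) + 2 = 6m$, and hence $\dim_K\Lambda = 6\cdot 6m = 36m$.

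The argument is pure bookkeeping, since all the genuine content has already been absorbed into the verification that $\cB_i$ is a basis, via Lemma~\ref{lem:4.5} together with properties (b)--(d). Consequently the only point demanding care is confirming that the four defining families of $\cB_i$ are genuinely disjoint, so that no element is counted twice; this is the sole potential obstacle, and it is dispatched at once by the initial-arrow, length-class, and terminal-arrow bookkeeping indicated above.
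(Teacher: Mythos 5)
Your count is correct and is exactly the computation the paper leaves implicit: Corollary~\ref{cor:4.6} is stated as an immediate consequence of the basis $\cB_i$ of $e_i\Lambda$, and both your direct enumeration $(3m+1)+m+m+(m-1)=6m$ and your cross-check via property (a) agree with it. No issues.
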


The next theorem completes the proof of Theorem~\ref{th:main1}. 

\begin{theorem}
\label{th:4.7} 
$\Lambda$ is a symmetric algebra.
\end{theorem}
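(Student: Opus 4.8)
The plan is to produce an explicit symmetrizing linear form. Using the basis $\cB = \bigcup_{i \in Q_0} \cB_i$ together with its properties (a)--(d) established above, I would define a $K$-linear map $\varphi \colon \Lambda \to K$ by setting $\varphi(\omega_i) = 1$ for each vertex $i$ (where $\omega_i = X_i^m$ spans the socle of $e_i\Lambda$) and $\varphi(b) = 0$ for every other element $b \in \cB$. Since the $\omega_i$ are precisely the basis elements of length $3m$, this $\varphi$ is just the composite of the projection of $\Lambda$ onto its socle with the sum of coordinates. To prove that $\Lambda$ is symmetric it then suffices to check that the associated bilinear form $(a,b) := \varphi(ab)$ is symmetric and non-degenerate; by bilinearity both properties may be tested on the basis $\cB$.

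For symmetry I would argue as follows. By property (c) the product $bb'$ of two basis elements is either $0$ or a single element of $\cB$, and it has length $3m$ exactly when $t(b) = s(b')$ and $|b| + |b'| = 3m$, in which case property (c) forces $s(b) = t(b')$ as well; thus $bb'$ is then a cyclic path of length $3m$ around $s(b)$, and by Lemma~\ref{lem:4.5}(iii) such a path equals $X_{s(b)}^m = \omega_{s(b)}$. Consequently $\varphi(bb') = 1$ precisely when $t(b) = s(b')$, $s(b) = t(b')$ and $|b| + |b'| = 3m$, and $\varphi(bb') = 0$ otherwise. This condition is manifestly invariant under interchanging $b$ and $b'$, whence $\varphi(bb') = \varphi(b'b)$ for all $b,b' \in \cB$, i.e.\ $\varphi$ vanishes on all commutators and the form $(-,-)$ is symmetric.

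For non-degeneracy I would use property (d): for each $b \in \cB_i$ there is a unique $\hat b \in \cB$ with $b\hat b = \omega_i$, so the Gram matrix $\big(\varphi(bb')\big)_{b,b' \in \cB}$ has in the row indexed by $b$ a single nonzero entry, equal to $1$, in the column $\hat b$. The symmetry computation above also shows that $\hat b\, b = \omega_{t(b)}$, and hence (by the uniqueness in property (d)) that $b \mapsto \hat b$ is an involution of $\cB$; therefore the Gram matrix is the permutation matrix of this involution and is invertible. This gives non-degeneracy, and together with symmetry it proves that $\Lambda$ is a symmetric algebra.

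I expect the main obstacle to be the symmetry step, specifically making precise that a nonzero product $bb'$ of length $3m$ is automatically a closed cycle equal to $\omega_{s(b)}$: this is where the combinatorics of the basis recorded in property (c) must be combined with the identification of every $3m$-cycle with the socle generator via Lemma~\ref{lem:4.5}(iii), and where one must take care that the product carries the coefficient $1$ and no other scalar. Once these two facts are in hand, the symmetry of the defining condition and the involutivity of $b \mapsto \hat b$ follow cleanly.
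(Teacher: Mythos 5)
Your proposal is correct and follows essentially the same route as the paper: the paper defines the bilinear form $(b,b')$ directly as the coefficient of $\omega_i$ in $bb'$ (which coincides with your $\varphi(bb')$), and proves symmetry and non-degeneracy from properties (c) and (d) of the basis together with Lemma~\ref{lem:4.5}(iii), exactly as you do. Your extra observation that $b\mapsto\hat b$ is an involution, so the Gram matrix is a permutation matrix, is a slightly more explicit version of the paper's non-degeneracy step.
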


\begin{proof} 
We use the above basis  to define a symmetrizing bilinear form.
If $b, b'\in \cB$, define 
\[
  (b, b'):= \mbox{ the coefficient of $\omega_i$ when $bb'$ 
                           is expanded in terms of $\cB$. }
\]
This extends to a bilinear form, and it is clearly associative. 
By (c) and (d) above, the Gram matrix of the bilinear form 
is non-singular, hence the form is non-degenerate.
We show that the form is symmetric.

Let $b, b'\in \cB$, where $b= e_ibe_j$ and $b'= e_kb'e_l$. Then we have
\[
  (b, b')  = \left\{\begin{array}{ll} 
     1 & j=k, i=l, \ell(bb') = 3m,\cr
     0 & \mbox{else},
   \end{array}\right.
\]
and $(b, b')$ is the same. 
\end{proof}

\section{Proof of Theorem~\ref{th:main2}}\label{sec:proof2}

Let $(Q,f)$ be the triangulation quiver
associated to the tetrahedron.
Then we have the involution $\bar{}: Q_1 \to Q_1$
on the set $Q_1$ of arrows of $Q$ which assigns to an arrow
$\theta \in Q_1$ the arrow $\bar{\theta}$ with
$s(\theta) = s(\bar{\theta})$ and $\theta \neq \bar{\theta}$.
With this, we obtain another permutation $g: Q_1 \to Q_1$
such that
$g(\theta) = \overbar{f(\theta)}$ for any $\theta \in Q_1$,
as indicated in the introduction.

Let $m \geq 2$ be a natural number, $\lambda \in K$, and
$\Lambda(m,\lambda)$ the associated 
higher tetrahedral algebra.
We will prove first that $\Lambda(m,\lambda)$ is a tame algebra.
We divide the proof into several steps.

\begin{proposition}
\label{prop:5.1}
For each $\lambda \in K \setminus \{ 0 \}$, $\Lambda(m,\lambda)$ 
degenerates to $\Lambda(m,0)$.
\end{proposition}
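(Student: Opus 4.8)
The plan is to realize $\Lambda(m,0)$ and $\Lambda(m,\lambda)$ as the special and generic members of a one-parameter algebraic family and then apply Proposition~\ref{prop:2.2}. Put $d=36m$, which by Corollary~\ref{cor:4.6} is the common dimension of all the algebras $\Lambda(m,s)$, $s\in K$. Fix the nonzero scalar $\lambda$ and set $A(t):=\Lambda(m,t\lambda)$ for $t\in K$, so that $A(0)=\Lambda(m,0)$ and $A(1)=\Lambda(m,\lambda)$. First I would check that $A(-):K\to\alg_d(K)$ is an algebraic family: the set $\cB$ of paths constructed in Section~\ref{sec:proof1} is a $K$-basis of $\Lambda(m,s)$ for every value $s$ of the parameter (the arguments of Lemmas~\ref{lem:4.4} and~\ref{lem:4.5} hold for all $s\in K$), and expressing the product of two elements of $\cB$ back in terms of $\cB$ uses only the defining relations, whose coefficients are polynomial---in fact affine-linear---in $s=t\lambda$. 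Hence the structure constants of $A(t)$ in the fixed basis $\cB$ are polynomials in $t$, so $A(-)$ is a regular map.

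The heart of the argument is a rescaling isomorphism. For $c\in K^*$ the assignment $\theta\mapsto c\,\theta$ on the arrows of $Q$ extends to an automorphism $\Phi_c$ of the path algebra $KQ$ with inverse $\Phi_{c^{-1}}$, and I claim it induces an isomorphism $\Lambda(m,\lambda)\xrightarrow{\ \sim\ }\Lambda(m,\lambda c^{3(m-1)})$. To see this one checks that $\Phi_c$ carries the generators of the ideal $I_\lambda$ onto scalar multiples of the generators of $I_{\lambda c^{3(m-1)}}$. A path of length $\ell$ is scaled by $c^\ell$, so every length-$2$ commutativity relation (such as $\delta\eta=\nu\omega$) and every length-$3m$ zero relation is preserved up to a nonzero scalar. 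For each of the three relations carrying the parameter, e.g.
\[
 \gamma\delta = \beta\varepsilon + \lambda (\beta\varrho\omega)^{m-1}\beta\varepsilon,
\]
the two leading monomials have length $2$ while the correction term has length $3(m-1)+2=3m-1$; thus applying $\Phi_c$ and factoring out the common $c^2$ replaces the coefficient $\lambda$ by $\lambda c^{3(m-1)}$ and leaves the relation otherwise unchanged. Since $\Phi_c$ is an automorphism, this gives $\Phi_c(I_\lambda)=I_{\lambda c^{3(m-1)}}$, and $\Phi_c$ descends to the asserted isomorphism.

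It remains to match parameters. As $m\geq 2$ the exponent $3(m-1)$ is a positive integer, and since $K$ is algebraically closed the power map $c\mapsto c^{3(m-1)}$ is surjective onto $K^*$. Hence for any $t\in K^*$ I may choose $c\in K^*$ with $c^{3(m-1)}=t$, and the previous paragraph yields
\[
 A(1)=\Lambda(m,\lambda)\cong\Lambda(m,\lambda c^{3(m-1)})=\Lambda(m,t\lambda)=A(t).
\]
Thus $A(t)\cong A(1)$ for all $t\in K^*$, and Proposition~\ref{prop:2.2} shows that $A(1)=\Lambda(m,\lambda)$ degenerates to $A(0)=\Lambda(m,0)$, as required.

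I expect the only genuinely delicate point to be confirming that $A(-)$ is an honest algebraic family, i.e.\ that passing to the basis $\cB$ gives structure constants depending regularly on $t$; the rescaling computation itself is routine once one observes that the correction terms all have the uniform length $3m-1$, exceeding the length of the leading monomials by exactly $3(m-1)$, which is precisely what lets a single global rescaling reparametrize $\lambda$ cleanly.
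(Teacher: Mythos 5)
Your proposal is correct and follows essentially the same route as the paper: the authors also form the family $\Lambda(t)$ with parameter $t\lambda$, exhibit the isomorphism $\Lambda(1)\cong\Lambda(t)$ via the rescaling $\theta\mapsto a_t\theta$ with $a_t^{3(m-1)}=t$, and invoke Proposition~\ref{prop:2.2}. Your extra care in justifying that the structure constants depend polynomially on $t$ (via the basis $\cB$) fills in a point the paper leaves implicit, but the argument is the same.
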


\begin{proof}
For each $t \in K$, consider the algebra $\Lambda(t)$
given by the quiver $Q$ and the relations:
\begin{align*}
 \gamma\delta &= \beta\varepsilon + t \lambda (\beta\varrho\omega)^{m-1} \beta\varepsilon, 
 &
 \delta\eta &= \nu\omega,
 &
 \eta\gamma &= \xi\alpha,
 & 
 \nu \mu &= \delta\xi ,
\\
 \varrho\omega &= \varepsilon\eta + t \lambda (\varepsilon\xi\sigma)^{m-1} \varepsilon\eta,
 &
 \omega\beta &= \mu\sigma,
 &
 \beta\varrho &= \gamma\nu ,
 &
 \mu \alpha &= \omega \gamma ,
\\
 \xi\sigma &= \eta\beta + t \lambda (\eta\gamma\delta)^{m-1} \eta\beta,
 &
 \sigma\varepsilon &= \alpha\delta,
 &
 \varepsilon\xi &= \varrho\mu,
 &
 \alpha\nu &= \sigma\varrho, 
\\
\omit\rlap{\qquad\quad\ \ $\big(\theta f(\theta) f^2(\theta)\big)^{m-1} \theta f(\theta) g\big(f(\theta)\big) = 0$ for any arrow $\theta$ in $Q$.}
\end{align*}
Then $\Lambda(t)$, $t \in K$, is an algebraic family
in the variety $\alg_d(K)$, with $d = 36m$.
Observe that $\Lambda(0) \cong \Lambda(m,0)$ and 
$\Lambda(1) \cong \Lambda(m,\lambda)$.
Fix $t \in K \setminus \{ 0 \}$, and take an element $a_t \in K$
with $a_t^{3(m-1)} = t$.
Then there is an isomorphism of algebras  
$\varphi_t : \Lambda(1) \to \Lambda(t)$
such that
$\varphi_t(\theta) = a_t \theta$ for any arrow $\theta$ in $Q$.
This shows that $\Lambda(t) \cong \Lambda(1)$ for all $t \in K \setminus \{ 0 \}$.
Then it follows from Proposition~\ref{prop:2.2}
that $\Lambda(m,\lambda)$  degenerates to $\Lambda(m,0) = \Lambda(0)$.
\end{proof}

Let $\Omega(m)$ be the algebra given by quiver $\Delta$ of the form
\[
\begin{xy}
0;/r2.5pc/:
(0.5,1.125)*+{1}="1";
(1.5,1.675)*+{2}="2";
(0,0.425)*+{3}="3";
(0,-0.525)*+{4}="4";
(-0.5,1.125)*+{5}="5";
(-1.5,1.675)*+{6}="6";
(0,2.8)*+{7}="7";
(2,0)*+{8}="8";
(-2,0)*+{9}="9";
\ar @{->}@/^.25ex/_{\alpha_1} "1";"7"
\ar @{->}@/_1ex/_{\alpha_2} "2";"7"
\ar @{->}@/^.5ex/_{\alpha_3} "3";"8"
\ar @{->}@/_1ex/_{\alpha_4} "4";"8"
\ar @{->}@/^.5ex/_{\alpha_5} "5";"9"
\ar @{->}@/_1ex/_{\alpha_6} "6";"9"
\ar @{->}@/^.5ex/_{\beta_5} "7";"5"
\ar @{->}@/_1ex/_{\beta_6} "7";"6"
\ar @{->}@/^.5ex/_{\beta_1} "8";"1"
\ar @{->}@/_1ex/_{\beta_2} "8";"2"
\ar @{->}@/^.5ex/_{\beta_3} "9";"3"
\ar @{->}@/_1ex/_{\beta_4} "9";"4"
\end{xy}
\]
and the relations:
\begin{align*}
 \beta_1\alpha_1 &= \beta_2\alpha_2, &
 \beta_3\alpha_3 &= \beta_4\alpha_4, &
 \beta_5\alpha_5 &= \beta_6\alpha_6,
\end{align*}
\begin{align*}
 \alpha_1 (\beta_5 \alpha_5 \beta_3 \alpha_3 \beta_1 \alpha_1)^{m-1}
    \beta_5 \alpha_5 \beta_3 \alpha_3 \beta_2 &= 0, 
& 
 \alpha_2 (\beta_6 \alpha_6 \beta_4 \alpha_4 \beta_2 \alpha_2)^{m-1}
    \beta_6 \alpha_6 \beta_4 \alpha_4 \beta_1 &= 0, 
\\
 \alpha_3 (\beta_1 \alpha_1 \beta_5 \alpha_5 \beta_3 \alpha_3)^{m-1}
    \beta_1 \alpha_1 \beta_5 \alpha_5 \beta_4 &= 0, 
& 
 \alpha_4 (\beta_2 \alpha_2 \beta_6 \alpha_6 \beta_4 \alpha_4)^{m-1}
    \beta_2 \alpha_2 \beta_6 \alpha_6 \beta_3 &= 0, 
\\ 
 \alpha_5 (\beta_3 \alpha_3 \beta_1 \alpha_1 \beta_5 \alpha_5)^{m-1}
    \beta_3 \alpha_3 \beta_1 \alpha_1 \beta_6 &= 0, 
& 
 \alpha_6 (\beta_4 \alpha_4 \beta_2 \alpha_2 \beta_6 \alpha_6)^{m-1}
    \beta_4 \alpha_4 \beta_2 \alpha_2 \beta_5 &= 0. 
\end{align*}
For each vertex $i$ of $\Delta$, we denote by $e_i$ the primitive
idempotent of $\Omega(m)$  associated to $i$.
Moreover, let
$e = e_1 + e_2 + e_3 + e_4 + e_5 + e_6$.

\begin{lemma}%
\label{lem:5.2}%
The following statements hold:
\begin{enumerate}[(i)]
 \item
  $\Omega(m)$ is a finite-dimensional algebra 
  with $\dim_K \Omega(m) = 81 m + 3$.
 \item
  $\Lambda(m,0)$ is isomorphic to the idempotent algebra
  $e \Omega (m) e$.
\end{enumerate}
\end{lemma}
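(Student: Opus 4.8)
The plan is to prove the two statements in Lemma~\ref{lem:5.2} separately, with part~(ii) being the substantive one and part~(i) then following as a bookkeeping exercise.

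For part~(ii), the key is to recognize the structure of the quiver $\Delta$. The vertices $7, 8, 9$ are ``intermediate'' vertices, each connected by a pair of arrows in each direction to a designated pair among the vertices $1,\dots,6$: vertex $7$ to $\{1,2\}$ and $\{5,6\}$, vertex $8$ to $\{3,4\}$ and $\{1,2\}$, vertex $9$ to $\{5,6\}$ and $\{3,4\}$. The idempotent $e = e_1 + \cdots + e_6$ cuts out the six vertices $1,\dots,6$, so $e\Omega(m)e$ has exactly the six vertices of the tetrahedral quiver $Q$. First I would compute the Gabriel quiver of $e\Omega(m)e$: its arrows correspond to the paths in $\Delta$ from a vertex in $\{1,\dots,6\}$ to another such vertex that pass through a single intermediate vertex and cannot be factored through $e$. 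These are the compositions $\alpha_i\beta_j$ (where $\alpha_i$ enters an intermediate vertex and $\beta_j$ leaves it). I would set up the explicit dictionary between these length-two paths through $7,8,9$ and the arrows $\delta, \eta, \gamma, \beta, \varepsilon, \xi, \varrho, \omega, \sigma, \nu, \mu, \alpha$ of $Q$, reading off the incidences so that the resulting quiver of $e\Omega(m)e$ is exactly $Q$.

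Next I would verify that under this identification the relations of $\Omega(m)$ translate precisely into the relations defining $\Lambda(m,0)$. The three commutativity relations $\beta_i\alpha_i = \beta_j\alpha_j$ of $\Omega(m)$ should correspond to the $\lambda = 0$ versions of the first column of relations of $\Lambda$ (the relations $\gamma\delta = \beta\varepsilon$, $\varrho\omega = \varepsilon\eta$, $\xi\sigma = \eta\beta$), since with $\lambda = 0$ these are exactly commutativity statements. The remaining equalities of $\Lambda$ (columns two through four, giving the identifications among the $X_i$ and the single-relation identities like $\delta\eta = \nu\omega$) should arise from forming the length-four paths $\alpha_i\beta_j\alpha_k\beta_l$ through two intermediate vertices and matching them against the corresponding length-two products in $e\Omega(m)e$. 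Finally, the long zero-relations of $\Omega(m)$, which have the form $\alpha_i(\cdots)^{m-1}\cdots\beta_j = 0$, should translate into the zero-relations $(\theta f(\theta) f^2(\theta))^{m-1}\theta f(\theta) g(f(\theta)) = 0$ of $\Lambda(m,0)$ after the arrow dictionary is applied; I would check this for one representative and invoke the symmetry.

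For part~(i), I would compute $\dim_K \Omega(m)$ by exhibiting a basis. The natural approach is to describe a basis of paths modulo the relations, analogous to the basis $\cB$ constructed for $\Lambda$ in Section~\ref{sec:proof1}. Since $e\Omega(m)e \cong \Lambda(m,0)$ has dimension $36m$ by Corollary~\ref{cor:4.6}, the six ``large'' idempotent summands $e_i\Omega(m)e_j$ with $i,j \in \{1,\dots,6\}$ contribute exactly $36m$ to the total dimension, and it remains to count the contributions involving at least one of the vertices $7, 8, 9$. I expect the remaining dimension to be $81m + 3 - 36m = 45m + 3$, which I would confirm by listing path-basis elements ending or starting at $7, 8, 9$. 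The main obstacle will be carrying out the translation in part~(ii) accurately: keeping the arrow dictionary between $\Delta$ and $Q$ consistent across all twelve arrows, and checking that no spurious relations or arrows appear in $e\Omega(m)e$ beyond those defining $\Lambda(m,0)$. Once the dictionary is fixed, the verification of the relations and the dimension count should be a direct if lengthy computation, and the automorphism $\varphi$ from Lemma~\ref{lem:4.1} together with the threefold symmetry visible in the relations of $\Omega(m)$ can be used to reduce the casework to roughly one orbit representative in each instance.
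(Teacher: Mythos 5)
Your proposal takes essentially the same route as the paper: part (ii) is proved there by exactly the dictionary you describe ($\delta=\alpha_1\beta_5$, $\nu=\alpha_1\beta_6$, \dots, $\omega=\alpha_6\beta_4$) followed by checking that these length-two paths satisfy the defining relations of $\Lambda(m,0)$, and part (i) is the direct count $\dim_K e_i\Omega(m)=9m$ for $i\in\{1,\dots,6\}$ and $\dim_K e_j\Omega(m)=9m+1$ for $j\in\{7,8,9\}$. The only caution is organizational: to know that the surjection $\Lambda(m,0)\to e\Omega(m)e$ is injective you need the dimension of $e\Omega(m)e$ computed directly from the paths of $\Delta$ modulo the relations of $\Omega(m)$ (as the paper implicitly does in (i)), rather than importing $36m$ from Corollary~\ref{cor:4.6} via (ii), which would be circular.
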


\begin{proof} 
(i)
A direct checking shows that 
$\dim_K e_i \Omega(m) = 9 m$ for $i \in \{1,2,3,4,5,6\}$,
and $\dim_K e_j \Omega(m) = 9m+1$ for $j \in \{7,8,9\}$.
Therefore, we obtain $\dim_K \Omega(m) = 81 m + 3$.

\smallskip

(ii)
Consider the paths of length $2$ in $\Delta$
\begin{align*}
 \delta &= \alpha_1 \beta_5, &
 \nu  &= \alpha_1 \beta_6, &
 \varepsilon  &= \alpha_2 \beta_5, &
 \varrho  &= \alpha_2 \beta_6, & 
 \alpha  &= \alpha_3 \beta_1, &
 \sigma  &= \alpha_3 \beta_2, \\
 \gamma  &= \alpha_4 \beta_1, &
 \beta  &= \alpha_4 \beta_2, & 
 \xi  &= \alpha_5 \beta_3, &  
 \eta  &= \alpha_5 \beta_4, &
 \mu  &= \alpha_6 \beta_3, &
 \omega  &= \alpha_6 \beta_4.
\end{align*}
Then these paths satisfy the relations defining the algebra
$\Lambda(m,0)$.
Therefore, $e \Omega (m) e$ is isomorphic to $\Lambda(m,0)$.
\end{proof}

The algebra $\Omega (m)$ can be viewed as a blowup
of the algebra $\Lambda(m,0)$.
The reason to consider it here is as follows.
The higher tetrahedral  algebras $\Lambda(m,\lambda)$
have no visible degenerations to special biserial alebras.
But the algebra $\Omega (m)$ admits a degeneration
to a special biserial algebra, as we will show below.
Then Proposition~\ref{prop:2.1} will imply that 
$\Omega (m)$ is a tame algebra,
and consequently $\Lambda(m,0)$ is a tame algebra
(see \cite[Theorem]{DS0}).

For each $t \in K$, let $\Sigma(m,t)$ be the algebra
given by the quiver $\Sigma$ of the form
\[
\begin{xy}
0;/r2pc/:
(1,1.4)*+{x}="x";
(0,0)*+{y}="y";
(-1,1.4)*+{z}="z";
(0,2.8)*+{a}="a";
(2,0)*+{b}="b";
(-2,0)*+{c}="c";
\ar @{->}_{\alpha} "x";"a"
\ar @{->}_{\beta} "b";"x"
\ar @{->}_{\gamma} "y";"b"
\ar @{->}_{\sigma} "c";"y"
\ar @{->}_{\omega} "z";"c"
\ar @{->}_{\delta} "a";"z"
\ar@(dr,dl)^{\eta} "y";"y"
\ar@(u,r)^{\varepsilon} "x";"x"
\ar@(l,u)^{\mu} "z";"z"
\end{xy}
\]
and the relations:
\begin{align}
 \beta \alpha &=0, &
 \sigma \gamma &= 0, &
 \delta \omega &= 0, &
 \varepsilon^2 &= t \varepsilon, &
 \eta^2 &= t \eta, &
 \mu^2 &= t \mu,
\end{align}
\begin{align}
 t(\alpha\delta\mu\omega\sigma\eta\gamma\beta\varepsilon)^m
 &= \varepsilon(\alpha\delta\mu\omega\sigma\eta\gamma\beta\varepsilon)^m ,
 &
 t(\varepsilon\alpha\delta\mu\omega\sigma\eta\gamma\beta)^m
 &= (\varepsilon\alpha\delta\mu\omega\sigma\eta\gamma\beta)^m \varepsilon,
\\
\notag
 t(\gamma\beta\varepsilon\alpha\delta\mu\omega\sigma\eta)^m
 &= \eta(\gamma\beta\varepsilon\alpha\delta\mu\omega\sigma\eta)^m ,
 &
 t(\eta\gamma\beta\varepsilon\alpha\delta\mu\omega\sigma)^m
 &= (\eta\gamma\beta\varepsilon\alpha\delta\mu\omega\sigma)^m \eta,
\\
\notag
 t(\omega\sigma\eta\gamma\beta\varepsilon\alpha\delta\mu)^m
 &= \mu(\omega\sigma\eta\gamma\beta\varepsilon\alpha\delta\mu)^m ,
 &
 t(\mu\omega\sigma\eta\gamma\beta\varepsilon\alpha\delta)^m
 &= (\mu\omega\sigma\eta\gamma\beta\varepsilon\alpha\delta)^m \mu,
\end{align}
\begin{align}
 (\alpha\delta\mu\omega\sigma\eta\gamma\beta\varepsilon)^m
 &= (\varepsilon\alpha\delta\mu\omega\sigma\eta\gamma\beta)^m ,
&
 (\gamma\beta\varepsilon\alpha\delta\mu\omega\sigma\eta)^m
 &= (\eta\gamma\beta\varepsilon\alpha\delta\mu\omega\sigma)^m,
\\
\notag
 (\omega\sigma\eta\gamma\beta\varepsilon\alpha\delta\mu)^m
 &= (\mu\omega\sigma\eta\gamma\beta\varepsilon\alpha\delta)^m ,
\end{align}
\begin{align}
 (\delta\mu\omega\sigma\eta\gamma\beta\varepsilon\alpha)^m \delta &=0 ,
 &
 \alpha (\delta\mu\omega\sigma\eta\gamma\beta\varepsilon\alpha)^m &=0 ,
 &
 (\beta\varepsilon\alpha\delta\mu\omega\sigma\eta\gamma)^m \beta &=0 ,
 \\
\notag
 \gamma (\beta\varepsilon\alpha\delta\mu\omega\sigma\eta\gamma)^m &=0 ,
 &
 (\sigma\eta\gamma\beta\varepsilon\alpha\delta\mu\omega)^m \sigma &=0 ,
 &
 \omega (\sigma\eta\gamma\beta\varepsilon\alpha\delta\mu\omega)^m &=0 .
\end{align}
We note that for $t \in K \setminus \{ 0 \}$ 
the relations (3) follow from the relations (2),
and the relationts (4) from the relations (1) and (2).
For example, we have the equalities
\begin{align*}
 t (\delta\mu\omega\sigma\eta\gamma\beta\varepsilon\alpha)^m \delta 
 &= t \delta (\mu\omega\sigma\eta\gamma\beta\varepsilon\alpha\delta)^m 
  = \delta (\mu\omega\sigma\eta\gamma\beta\varepsilon\alpha\delta)^m \mu
\\
 &= \delta \mu (\omega\sigma\eta\gamma\beta\varepsilon\alpha\delta\mu)^m 
  = t \delta (\omega\sigma\eta\gamma\beta\varepsilon\alpha\delta\mu)^m 
 = 0 ,
\end{align*}
because $\delta \omega = 0$, and hence 
$(\delta\mu\omega\sigma\eta\gamma\beta\varepsilon\alpha)^m \delta = 0$,
for $t \in K \setminus \{ 0 \}$.
For each vertex $i$ of $\Sigma$, we denote by $f_i$ the primitive
idempotent of $\Sigma(m,t)$ associated to $i$. 

\begin{lemma}
\label{lem:5.3} 
The following statements hold:
\begin{enumerate}[(i)]
 \item
  For each $t \in K$,
  $\Sigma(m,t)$ is a finite-dimensional algebra 
  with $\dim_K \Sigma(m,t) = 81 m + 3$.
 \item
  $\Sigma(m,t) \cong \Sigma(m,1)$
  for any $t \in K \setminus \{ 0 \}$.
 \item
  $\Sigma(m,0)$ is a special biserial algebra.
\end{enumerate}
\end{lemma}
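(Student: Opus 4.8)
My plan is to establish the three statements in the order (ii), (iii), (i), since the rescaling isomorphisms of (ii) let me transport both the structural analysis and the final dimension count across $K\setminus\{0\}$, while (iii) makes the path combinatorics transparent at $t=0$. For (ii) I would exhibit an explicit rescaling isomorphism. Fix $t\neq 0$ and define a $K$-algebra homomorphism $\phi\colon\Sigma(m,1)\to\Sigma(m,t)$ that is the identity on vertices, sends each loop $\varepsilon,\eta,\mu$ to $t^{-1}$ times itself, and fixes every non-loop arrow. The only defining relations that mention $t$ are the loop squares in (1) and the cyclic relations (2), so I just have to check that $\phi$ carries each of these to a unit multiple of the corresponding relation of $\Sigma(m,t)$: for a loop one computes $\phi(\varepsilon^2-\varepsilon)=t^{-2}(\varepsilon^2-t\varepsilon)$, and for a relation in (2) one uses that each length-nine cyclic word $\alpha\delta\mu\omega\sigma\eta\gamma\beta\varepsilon$, and its rotations, contains exactly three loops and is therefore scaled by $t^{-3}$, so the two sides match after clearing powers of $t$. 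The relations (3) are homogeneous of one fixed degree on each side and the relations (4) are monomial, hence both are preserved by any rescaling. As $\phi$ multiplies arrows by units it is bijective, giving $\Sigma(m,t)\cong\Sigma(m,1)$.

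For (iii) I would verify the two defining conditions of a special biserial algebra directly at $t=0$. Condition (a) is immediate from the quiver $\Sigma$: each of $x,y,z$ is the source, and the target, of exactly two arrows, and each of $a,b,c$ of exactly one. For condition (b), observe that at $t=0$ the only relations of length two are the six monomials $\beta\alpha,\sigma\gamma,\delta\omega,\varepsilon^2,\eta^2,\mu^2$, since every relation in (2)--(4) has length at least $9m\geq 18$. Running through the arrows one sees that at each vertex precisely one of the (at most two) continuations survives on each side; for instance at $x$ the surviving length-two paths are exactly $\beta\varepsilon$ and $\varepsilon\alpha$, while $\beta\alpha$ and $\varepsilon^2$ die, which is condition (b) at $x$, and the other vertices are identical after applying the rotational symmetry. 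Admissibility of the ideal follows from finite-dimensionality, so $\Sigma(m,0)$ is special biserial.

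For (i) the strategy is to produce an explicit monomial basis and count it uniformly in $t$. Using $\ell^2=t\ell$ I may restrict to paths with no repeated loop, and the zero relations $\beta\alpha=\sigma\gamma=\delta\omega=0$ then force every nonzero reduced path to run along the single length-nine cycle $W=\alpha\delta\mu\omega\sigma\eta\gamma\beta\varepsilon$, picking up each loop exactly once as it passes $x,y,z$. Thus the reduced paths are precisely the initial subwords of this cyclic word read off from one of its nine ``gaps'' (the nine admissible starting edges), and the relations (4) and (2) cut off each such family at length $9m$, yielding $9m$ nonzero paths of positive length from each gap. Moreover the three relations $W^m=(W')^m$ in (3), with $W'=\varepsilon\alpha\delta\mu\omega\sigma\eta\gamma\beta$ and its two rotations, identify, at the two gaps based at each loop vertex, the two top-length paths. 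Counting then gives $\dim_K e_i\Sigma(m,t)=18m$ for $i\in\{x,y,z\}$ and $\dim_K e_i\Sigma(m,t)=9m+1$ for $i\in\{a,b,c\}$, for a total of $3\cdot 18m+3(9m+1)=81m+3$.

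The \emph{main obstacle} is proving that this spanning set is linearly independent: the reductions above only show it spans, and I must rule out further collapses, in particular any extra relations among reduced paths forced by the binomial relations (3), as well as confirm that the top-length words such as $W^m$ are genuinely nonzero. I would settle this by showing the relations (1)--(4) form a Gröbner basis for a length-refining monomial order, so that the normal forms are exactly the reduced paths enumerated above; this reduces everything to checking that the finitely many overlap ambiguities among the relations resolve. Alternatively, I can prove independence only at $t=0$, where the string-algebra combinatorics supplied by (iii) yields a basis directly, and then invoke (ii) to obtain $\dim_K\Sigma(m,t)=81m+3$ for every $t\neq 0$.
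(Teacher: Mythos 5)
Your proposal is correct and follows essentially the same route as the paper: an explicit rescaling isomorphism for (ii), a direct verification of the special biserial axioms for (iii), and the vertex-by-vertex count $\dim_K f_i\Sigma(m,t)=18m$ for $i\in\{x,y,z\}$ and $9m+1$ for $i\in\{a,b,c\}$ for (i). (The paper's rescaling also multiplies the six non-loop arrows by an eighth root of $t$, but, as you observe, any uniform non-loop scaling cancels in every relation, so your simpler choice of fixing them is equally valid; and the paper states the dimension count with no justification, so your analysis of the forced reduced paths along the nine-cycle and of the identifications imposed by relations (2)--(4) is a genuine improvement in explicitness.)

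One caveat about your fallback for the linear-independence step: statement (ii) identifies $\Sigma(m,t)$ with $\Sigma(m,1)$ only for $t\neq 0$, so proving independence at $t=0$ via the string combinatorics of (iii) and then ``invoking (ii)'' does not transport the dimension to $t=1$ --- the parameter values $0$ and $1$ are not linked by any isomorphism, and dimensions in such families can jump at special parameter values. You would still need a lower bound at some nonzero $t$, which is exactly what your primary route (checking that the overlaps of the relations resolve, uniformly in $t$) provides. This is not a cosmetic point: Corollary~\ref{cor:5.5} needs $\Sigma(m,t)$, $t\in K$, to be an algebraic family inside a single variety $\alg_d(K)$ with $d=81m+3$, so the dimension must be shown to be constant in $t$, not merely computed at $t=0$.
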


\begin{proof} 
(i)
It follows from the relations defining
$\Sigma(m,t)$ that $\dim_K f_i \Sigma(m,t) = 9 m + 1$ for $i \in \{a,b,c\}$,
and $\dim_K f_j \Sigma(m,t) = 18 m$ for $j \in \{x,y,z\}$.
Hence, we obtain $\dim_K \Sigma(m,t) = 81 m + 3$.

\smallskip

(ii)
Fix $t \in K \setminus \{ 0 \}$, and take an element $b_t \in K$
with $b_t^8 = t$.
Then there exists an isomorphism of algebras
$\psi_t : \Sigma(m,1) \to \Sigma(m,t)$
such that 
$\psi_t(\varepsilon) = t^{-1} \varepsilon$,
$\psi_t(\eta) = t^{-1} \eta$,
$\psi_t(\mu) = t^{-1} \mu$,
and
$\psi_t(\theta) =b_ t \theta$
for any arrow $\theta \in \{\alpha, \beta, \gamma, \sigma, \omega, \delta \}$.

\smallskip

(iii) 
Follows from the relations defining $\Sigma(m,0)$.
\end{proof}

\begin{lemma}
\label{lem:5.4}
The algebras $\Omega(m)$ and $\Sigma(m,1)$ 
are isomorphic.
\end{lemma}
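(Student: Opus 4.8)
The plan is to exhibit an explicit isomorphism $\Psi \colon \Sigma(m,1) \to \Omega(m)$ and to deduce its well-definedness from the results already proved for $\Lambda(m,0)$ and $\Omega(m)$. The guiding observation is that at $t=1$ the relations $\varepsilon^2=\varepsilon$, $\eta^2=\eta$, $\mu^2=\mu$ make the three loops of $\Sigma$ idempotent, so the vertices $x,y,z$ of $\Sigma$ ought to correspond to the pairs $\{1,2\}$, $\{3,4\}$, $\{5,6\}$ of $\Delta$, while $a,b,c$ correspond to $7,8,9$. Concretely I would set $\Psi(f_a)=e_7$, $\Psi(f_b)=e_8$, $\Psi(f_c)=e_9$, $\Psi(f_x)=e_1+e_2$, $\Psi(f_y)=e_3+e_4$, $\Psi(f_z)=e_5+e_6$, together with $\Psi(\varepsilon)=e_1$, $\Psi(\eta)=e_3$, $\Psi(\mu)=e_5$; on the six remaining arrows I would put $\Psi(\alpha)=\alpha_1+\alpha_2$, $\Psi(\gamma)=\alpha_3+\alpha_4$, $\Psi(\omega)=\alpha_5+\alpha_6$ and $\Psi(\beta)=\beta_1-\beta_2$, $\Psi(\sigma)=\beta_3-\beta_4$, $\Psi(\delta)=\beta_5-\beta_6$. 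The signs in the second group are essential: they are exactly what forces the zero relations of $\Sigma$ to be respected.

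For well-definedness I would first note that, since $1\in K\setminus\{0\}$, the remark preceding Lemma~\ref{lem:5.3} guarantees that in $\Sigma(m,1)$ the relations $(3)$ and $(4)$ follow from $(1)$ and $(2)$; hence it suffices to check that $\Psi$, defined on the path algebra $K\Sigma$, annihilates $(1)$ and $(2)$. The idempotent relations in $(1)$ are immediate, and for the monomial ones one computes, for instance, $\Psi(\beta\alpha)=(\beta_1-\beta_2)(\alpha_1+\alpha_2)=\beta_1\alpha_1-\beta_2\alpha_2$, the mixed terms dying for source/target reasons, and this is $0$ precisely by the commutativity relation $\beta_1\alpha_1=\beta_2\alpha_2$ of $\Omega(m)$; the relations $\sigma\gamma=0$ and $\delta\omega=0$ are identical. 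The relations $(2)$ are the substantial point: each of the six equations maps to an identity about a cycle whose two endpoints lie in $\{1,\dots,6\}$, hence an identity in the idempotent subalgebra $e\Omega(m)e\cong\Lambda(m,0)$ of Lemma~\ref{lem:5.2}. For example, writing $D=\alpha\delta\mu\omega\sigma\eta\gamma\beta\varepsilon$, the idempotent projections collapse $\Psi(D)$ to $(\alpha_1+\alpha_2)\beta_5\alpha_5\beta_3\alpha_3\beta_1$, and the relation $D^m=\varepsilon D^m$ translates into the vanishing of the component of $\Psi(D)^m$ that starts at vertex $2$; under $e\Omega(m)e\cong\Lambda(m,0)$ this component is a path of length $3m$ from $2$ to $1$, which is zero by Lemma~\ref{lem:4.5}(ii). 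The other five equations of $(2)$ are handled the same way, each reducing to the vanishing of a length-$3m$ path between two distinct vertices of $Q$.

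Once $\Psi$ is a homomorphism, surjectivity is routine: every $e_i$ is of the form $\Psi(\varepsilon)$, $\Psi(f_x-\varepsilon)$, $\Psi(f_a)$, \dots, and every arrow is recovered, e.g.\ $\alpha_1=\Psi(\varepsilon\alpha)$, $\alpha_2=\Psi(\alpha)-\Psi(\varepsilon\alpha)$, $\beta_1=\Psi(\beta\varepsilon)$, $\beta_2=\beta_1-\Psi(\beta)$, so the generators of $\Omega(m)$ lie in the image. Since $\dim_K\Sigma(m,1)=81m+3=\dim_K\Omega(m)$ by Lemmas~\ref{lem:5.3}(i) and~\ref{lem:5.2}(i), a surjection is automatically bijective, giving the isomorphism. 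I expect the main obstacle to be the verification of the relations $(2)$: one must compute the images of the six length-$9m$ cycles cleanly enough to isolate their ``wrong-corner'' components and then recognize those components as length-$3m$ paths between distinct vertices, so that Lemma~\ref{lem:4.5} applies. Keeping the idempotent projections and the signs consistent through this step is where the real effort lies; by comparison, the reduction to $(1)$ and $(2)$ and the check of $(1)$ are straightforward.
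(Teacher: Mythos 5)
Your proposal is correct and is essentially the paper's proof run in reverse: the paper defines the inverse map $\varphi\colon\Omega(m)\to\Sigma(m,1)$ on generators (with $\varphi(e_1)=\varepsilon$, $\varphi(\alpha_1)=\varepsilon\alpha$, $\varphi(\beta_2)=-\beta+\beta\varepsilon$, and so on) and checks that the six zero relations of $\Omega(m)$ become consequences of the commutativity relations (2), whereas you check the relations of $\Sigma(m,1)$ on the other side, reducing (2) to Lemma~\ref{lem:4.5}(ii) via Lemma~\ref{lem:5.2}(ii). Both arguments rest on the same idempotent splitting induced by the loops, the same sign conventions, and the same dimension count $81m+3$, so the difference is only in which algebra's relations get verified.
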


\begin{proof} 
We shall prove that there is a well defined isomorphism of algebras
$\varphi : \Omega(m) \to \Sigma(m,1)$
such that 
\begin{align*}
 \varphi(e_1) &= \varepsilon, &
 \varphi(e_2) &= f_x - \varepsilon, &
 \varphi(e_3) &= \eta, &
 \varphi(e_4) &= f_y - \eta, \\
 \varphi(e_5) &= \mu, &
 \varphi(e_6) &= f_z - \mu &
 \varphi(e_7) &= f_a, &
 \varphi(e_8) &= f_b, &
 \!\!\!\!\!\!\!\!\!\!\!\!\!\!\!\!\!
 \varphi(e_9) &= f_c, \\
 \varphi(\alpha_1) &= \varepsilon \alpha, &
 \varphi(\alpha_2) &= \alpha - \varepsilon \alpha, &
 \varphi(\beta_1) &= \beta \varepsilon, &
 \varphi(\beta_2) &= - \beta + \beta\varepsilon, \\ 
 \varphi(\alpha_3) &= \eta \gamma, &
 \varphi(\alpha_4) &= \gamma - \eta \gamma, &
 \varphi(\beta_3) &= \sigma \eta, &
 \varphi(\beta_4) &= - \sigma + \sigma \eta, \\ 
 \varphi(\alpha_5) &= \mu \omega, &
 \varphi(\alpha_6) &= \omega - \mu \omega, &
 \varphi(\beta_5) &= \delta\mu, &
 \varphi(\beta_6) &= - \delta + \delta\mu. 
\end{align*}
Observe that
\begin{align*}
 \varphi(e_1 + e_2) &= f_x, &
 \varphi(e_3 + e_4) &= f_y, &
 \varphi(e_5 + e_6) &= f_z, \\
 \varphi(\alpha_1 + \alpha_2) &= \alpha , &
 \varphi(\alpha_3 + \alpha_4) &= \gamma , &
 \varphi(\alpha_5 + \alpha_6) &= \omega , \\
 \varphi(\beta_1 - \beta_2) &= \beta , & 
 \varphi(\beta_3 - \beta_4) &= \sigma , & 
 \varphi(\beta_5 - \beta_6) &= \delta . 
\end{align*}
We have in $\Sigma(m,1)$ the following equalities
\begin{align*}
 \varphi(e_1^2) &= \varphi(e_1) = \varepsilon = \varepsilon^2  = \varphi(e_1)^2, \\
 \varphi(e_2^2) &= \varphi(e_2) =  f_x - \varepsilon = (f_x - \varepsilon)^2  = \varphi(e_2)^2, \\
 \varphi(e_3^2) &= \varphi(e_3) = \eta = \eta^2  = \varphi(e_3)^2, \\
 \varphi(e_4^2) &= \varphi(e_4) =  f_y - \eta = (f_y - \eta)^2  = \varphi(e_4)^2, \\
 \varphi(e_5^2) &= \varphi(e_5) = \mu = \mu^2  = \varphi(e_5)^2, \\
 \varphi(e_6^2) &= \varphi(e_6) =  f_z - \mu = (f_z - \mu)^2  = \varphi(e_6)^2, \\
 \varphi(\beta_1) \varphi(\alpha_1)  &= \beta \varepsilon^2 \alpha
   = \beta \varepsilon \alpha = (- \beta + \beta\varepsilon) (\alpha - \varepsilon \alpha)
   = \varphi(\beta_2) \varphi(\alpha_2), \\
 \varphi(\beta_3) \varphi(\alpha_3)  &= \sigma \eta^2 \gamma
   = \sigma \eta \gamma = (- \sigma + \sigma \eta) (\gamma - \eta \gamma)
   = \varphi(\beta_4) \varphi(\alpha_4), \\
 \varphi(\beta_5) \varphi(\alpha_5)  &= \delta\mu^2 \omega
   = \delta\mu \omega = (- \delta + \delta\mu) (\omega - \mu \omega)
   = \varphi(\beta_6) \varphi(\alpha_6) .
\end{align*}
It remains to show that the six zero relations defining $\Omega(m)$
correspond via $\varphi$ to the six commutativity relations (2),
with $t=1$, defining $\Sigma(m,1)$.
We will show this for the first two relations,
because the proof for the other four is similar.

We have the equalities
\begin{align*}
 \varphi(&\alpha_1) \big(\varphi(\beta_5) \varphi(\alpha_5) \varphi(\beta_3)
          \varphi(\alpha_3) \varphi(\beta_1) \varphi(\alpha_1)\big)^{m-1}
       \varphi(\beta_5) \varphi(\alpha_5) \varphi(\beta_3) \varphi(\alpha_3) \varphi(\beta_2) \\
   &= \varepsilon \alpha (\delta\mu^2 \omega \sigma \eta^2 \gamma \beta \varepsilon^2 \alpha)^{m-1}
         \delta\mu^2 \omega \sigma \eta^2 \gamma (- \beta + \beta\varepsilon) \\
   &= - \varepsilon \alpha (\delta\mu \omega \sigma \eta \gamma \beta \varepsilon \alpha)^{m-1}
          \delta\mu \omega \sigma \eta \gamma \beta
      + \varepsilon \alpha (\delta\mu \omega \sigma \eta \gamma \beta \varepsilon \alpha)^{m-1}
         \delta\mu \omega \sigma \eta \gamma \beta \varepsilon \\
   &= - (\varepsilon \alpha \delta\mu \omega \sigma \eta \gamma \beta)^{m}
      + (\varepsilon \alpha \delta\mu \omega \sigma \eta \gamma \beta)^{m} \varepsilon 
      = 0, 
\\
 \varphi(&\alpha_2) \big(\varphi(\beta_6) \varphi(\alpha_6) \varphi(\beta_4)
          \varphi(\alpha_4) \varphi(\beta_2) \varphi(\alpha_2)\big)^{m-1}
       \varphi(\beta_6) \varphi(\alpha_6) \varphi(\beta_4) \varphi(\alpha_4) \varphi(\beta_1) \\
   &= \varphi(\alpha_2) \big(\varphi(\beta_5) \varphi(\alpha_5) \varphi(\beta_3)
          \varphi(\alpha_3) \varphi(\beta_1) \varphi(\alpha_1)\big)^{m-1}
       \varphi(\beta_5) \varphi(\alpha_5) \varphi(\beta_3) \varphi(\alpha_3) \varphi(\beta_1) \\
   &= (\alpha - \varepsilon \alpha) (\delta\mu \omega \sigma \eta \gamma \beta \varepsilon \alpha)^{m-1}
         \delta\mu \omega \sigma \eta \gamma \beta \varepsilon \\
   &=  (\alpha \delta\mu \omega \sigma \eta \gamma \beta \varepsilon)^{m}
      - \varepsilon (\alpha \delta\mu \omega \sigma \eta \gamma \beta \varepsilon)^{m} 
      = 0. 
\end{align*}
\end{proof}

\begin{corollary}
\label{cor:5.5}
The algebra $\Omega(m)$ degenerates to the
special biserial algebra $\Sigma(m,0)$.
In particular,  $\Omega(m)$ is a tame algebra.
\end{corollary}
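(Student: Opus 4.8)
The plan is to realize $\Sigma(m,t)$, $t \in K$, as an algebraic family in $\alg_d(K)$ and then to string together the identifications already established. First I would observe that by Lemma~\ref{lem:5.3}(i) every algebra $\Sigma(m,t)$ has the same dimension $d = 81m + 3$, independently of $t$. This constant dimension is precisely what allows us to regard all the $\Sigma(m,t)$ as points of a single variety $\alg_d(K)$. Concretely, I would fix a $t$-independent $K$-basis of $\Sigma(m,t)$ — for instance the residue classes of a fixed set of paths in $\Sigma$ read off from the dimension count in the proof of Lemma~\ref{lem:5.3}(i) — and express the multiplication through structure constants with respect to this basis. Since the defining relations $(1)$--$(4)$ of $\Sigma(m,t)$ depend polynomially on $t$, these structure constants are polynomials in $t$, so the induced map $\Sigma(m,-) : K \to \alg_d(K)$ is a regular map of affine varieties; that is, $\Sigma(m,t)$, $t \in K$, is an algebraic family.

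With the family in place, the remainder is a short chain of results already proved. By Lemma~\ref{lem:5.3}(ii) we have $\Sigma(m,t) \cong \Sigma(m,1)$ for all $t \in K \setminus \{0\}$, so Proposition~\ref{prop:2.2} applies and shows that $\Sigma(m,1)$ degenerates to $\Sigma(m,0)$. By Lemma~\ref{lem:5.4} there is an isomorphism $\Omega(m) \cong \Sigma(m,1)$, and since degeneration depends only on the isomorphism classes of the two algebras involved (the $\GL_d(K)$-orbit, hence its closure, is unchanged under isomorphism), it follows that $\Omega(m)$ degenerates to $\Sigma(m,0)$. Because $\Sigma(m,0)$ is special biserial by Lemma~\ref{lem:5.3}(iii), this establishes the first assertion of the corollary.

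To obtain tameness I would then appeal to Proposition~\ref{prop:2.1}, which gives that the special biserial algebra $\Sigma(m,0)$ is tame. The ``in particular'' clause of Proposition~\ref{prop:2.2}, which is the application of Geiss' Theorem to our family, transfers tameness up the degeneration: since $\Sigma(m,0)$ is tame and $\Sigma(m,1)$ degenerates to it, $\Sigma(m,1)$ is tame, and hence so is the isomorphic algebra $\Omega(m)$.

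The one step deserving care, and the main (albeit mild) obstacle, is the verification that $\Sigma(m,t)$ is genuinely an \emph{algebraic} family rather than merely a collection of algebras indexed by $t$. This rests entirely on the constant-dimension statement of Lemma~\ref{lem:5.3}(i): only because $\dim_K \Sigma(m,t)$ does not jump as $t$ varies can one select a common basis and thereby obtain structure constants that are polynomial in $t$. Once this regularity is confirmed, everything else is a direct bookkeeping application of Propositions~\ref{prop:2.1} and~\ref{prop:2.2} together with Lemmas~\ref{lem:5.3} and~\ref{lem:5.4}.
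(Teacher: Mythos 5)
Your proposal is correct and follows essentially the same route as the paper: the paper likewise combines Lemmas~\ref{lem:5.3} and~\ref{lem:5.4} to view $\Sigma(m,t)$, $t\in K$, as an algebraic family in $\alg_d(K)$ with $d=81m+3$ and $\Sigma(m,t)\cong\Sigma(m,1)\cong\Omega(m)$ for $t\neq 0$, then applies Propositions~\ref{prop:2.1} and~\ref{prop:2.2}. Your explicit verification that the family is algebraic (constant dimension plus polynomial dependence of the structure constants on $t$) is a detail the paper leaves implicit, but it is the same argument.
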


\begin{proof} 
It follows from Lemmas \ref{lem:5.3} and \ref{lem:5.4} that
$\Sigma(m,1)$, $t \in K$, is an alebraic family in the variety
$\alg_K(d)$ with $d = 81 m + 3$
such that
$\Sigma(m,1)\cong \Sigma(m,1) \cong \Omega(m)$
for any $t \in K \setminus \{ 0 \}$ and
$\Sigma(m,0)$ is a special biserial algebra.
Then it follows from Propositions \ref{prop:2.1} and \ref{prop:2.2} 
that $\Omega(m)$ is a tame algebra.
\end{proof}

\begin{proposition}
\label{prop:5.6}
For each $\lambda \in K$, $\Lambda(m,\lambda)$ 
is a tame algebra of non-polynomial growth.
\end{proposition}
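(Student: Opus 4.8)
The plan is to separate the statement into its two assertions, tameness and non-polynomial growth, and to settle tameness directly from the machinery already assembled, reserving the genuine difficulty for the growth estimate.

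First I would dispose of tameness. By Corollary~\ref{cor:5.5} the algebra $\Omega(m)$ is tame, and by Lemma~\ref{lem:5.2}(ii) we have $\Lambda(m,0) \cong e\Omega(m)e$. Since idempotent subalgebras $eAe$ of tame algebras $A$ remain tame (the result \cite{DS0} quoted after Lemma~\ref{lem:5.2}), it follows that $\Lambda(m,0)$ is tame. For $\lambda \neq 0$, Proposition~\ref{prop:5.1} shows that $\Lambda(m,\lambda)$ degenerates to $\Lambda(m,0)$, so tameness of $\Lambda(m,\lambda)$ follows from Proposition~\ref{prop:2.2}. Hence every $\Lambda(m,\lambda)$ is tame.

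It remains to show that no $\Lambda(m,\lambda)$ is of polynomial growth; this is the heart of the matter, and I would argue by contradiction using the classification recalled in the introduction. The point is that all data entering its hypotheses are independent of $\lambda$: each $\Lambda(m,\lambda)$ is basic, connected (hence indecomposable), symmetric of dimension $36m$ by Theorem~\ref{th:main1}, its defining relations lie in $\rad^2$ so that its Gabriel quiver is the fixed triangulation quiver $(Q,f)$ regardless of $\lambda$, and it is representation-infinite, since a string quotient obtained by killing one of the two continuations of each arrow at every vertex still contains a band and therefore has indecomposables of arbitrarily large dimension. Were some $\Lambda(m,\lambda)$ of polynomial growth, then by \cite{BES4} it would be socle equivalent to an orbit algebra $\T(B)^G$ of the trivial extension of a tubular algebra $B$ under a free action of a finite cyclic group $G$.

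The main obstacle, and the step I expect to require the most care, is to rule this out. Socle equivalent symmetric algebras have the same Gabriel quiver and the same dimension (here $\soc\Lambda = \rad^{3m}\Lambda \subseteq \rad^2\Lambda$, so passing to the socle factor changes neither), so I would confront the $2$-regular quiver $Q$ — six vertices and twelve arrows, each vertex a source and a target of exactly two arrows — and the Cartan invariants read off from the basis $\cB$ of Section~\ref{sec:proof1}, against the list of trivial extensions of tubular algebras and their cyclic orbit algebras; no $\T(B)^G$ of tubular type carries this combinatorial and numerical structure, which yields the contradiction. I would emphasize that the degenerations constructed in this section do \emph{not} help here: polynomial growth is not inherited along $\Lambda(m,\lambda)\rightsquigarrow\Lambda(m,0)$ nor along $\Omega(m)\rightsquigarrow\Sigma(m,0)$ (only tameness propagates in that direction), so the non-domesticity of the special biserial $\Sigma(m,0)$ cannot simply be transported upward, and this is precisely why I prefer the classification route. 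Combining the exclusion of polynomial growth with the tameness proved above gives that each $\Lambda(m,\lambda)$ is tame of non-polynomial growth.
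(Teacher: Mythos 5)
Your tameness argument is exactly the paper's: $\Lambda(m,0)\cong e\Omega(m)e$ is tame by Lemma~\ref{lem:5.2}(ii), Corollary~\ref{cor:5.5} and \cite{DS0}, and the degeneration of Proposition~\ref{prop:5.1} together with Proposition~\ref{prop:2.2} transports tameness to $\Lambda(m,\lambda)$ for $\lambda\neq 0$. For non-polynomial growth, however, the paper does something much more concrete than what you propose: it passes to the quotient $\Gamma$ of $\Lambda$ by the ideal generated by the four arrows $\delta$, $\nu$, $\varepsilon$, $\varrho$, observes that $\Gamma$ is given by two commutative squares glued as in the quiver displayed in Section~\ref{sec:proof2} with relations $\omega\beta=\mu\sigma$, $\eta\gamma=\xi\alpha$, $\mu\alpha=\omega\gamma$, $\xi\sigma=\eta\beta$, and identifies $\Gamma$ with the tame minimal non-polynomial growth algebra $(30)$ from the N\"orenberg--Skowro\'nski list \cite{NoS}. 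Since polynomial growth is inherited by quotient algebras, $\Lambda$ cannot be of polynomial growth. This settles representation-infiniteness and the growth statement in one stroke and is independent of $\lambda$.

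Your alternative route via the classification in \cite{BES4} has a genuine gap at precisely the point you flag as needing the most care: the assertion that no $\T(B)^G$ for $B$ tubular ``carries this combinatorial and numerical structure'' is not an observation but a case analysis against a classification, and you do not carry it out. To close it you would need at least the following: the orbit algebra $\T(B)^G$ has $n_B/|G|$ vertices where $n_B\in\{6,8,9,10\}$ is the number of simples of a tubular algebra of the respective type, so six vertices forces $|G|=1$ and type $(2,2,2,2)$; one must then know the dimensions (and quivers) of all trivial extensions of tubular algebras of type $(2,2,2,2)$ and check that none is socle equivalent to a $36m$-dimensional algebra with the $2$-regular tetrahedral quiver. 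None of these facts is in the paper, and your representation-infiniteness claim (``a string quotient \ldots still contains a band'') is likewise asserted rather than exhibited. The argument could very likely be completed, but as written the decisive exclusion is a placeholder; the paper's quotient construction avoids the entire classification and is the intended proof.
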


\begin{proof}
It follows from
Lemma \ref{lem:5.2}~(ii), Corollary~\ref{cor:5.5}
and \cite[Theorem]{DS0} that $\Lambda(m,0)$ is a tame algebra.
Then, applying 
Propositions \ref{prop:2.2} and \ref{prop:5.1},
we conclue that 
$\Lambda(m,\lambda)$ is a tame algebra
for any $\lambda \in K \setminus \{ 0 \}$.
 $\Lambda = \Lambda(m,\lambda)$ 
for an arbitrary $\lambda \in K$.
Consider now the quotient algebra $\Gamma$ of $\Lambda$
by the ideal generated by the arrows $\delta$, $\nu$, $\varepsilon$, $\varrho$.
Then $\Gamma$ is the algebra given by the quiver
\[
  \xymatrix@C=4.5pc@R=3pc{
    1 &
    3 \ar[l]_{\alpha} \ar[ld]^(.2){\sigma} &
    5 \ar[l]_{\xi} \ar[ld]^(.2){\eta}
    \\
    2 &
    4 \ar[l]^{\beta} \ar[lu]_(.2){\gamma} &
    6 \ar[l]^{\omega} \ar[lu]_(.2){\mu}
  }
\]
and the relations
\begin{align*}
 &&
 \omega \beta &= \mu \sigma,
 &
 \eta \gamma &= \xi \alpha,
 &
 \mu \alpha &= \omega \gamma,
 &
 \xi \sigma &= \eta \beta.
 &&
\end{align*}
Then $\Gamma$ is the tame minimal non-polynomial growth algebra
$(30)$ from \cite{NoS}.
Therefore, $\Lambda$ is of non-polynomial growth.
\end{proof}

We end this section with a Galois covering interpretation of the singular
higher tetrahedral algebras.

Let $m \geq 2$ be a natural number.
We denote by $B(m)$ the fully commutative algebra
of the following quiver
\[
  \xymatrix@C=1.75pc@R=1.2pc@!=.5pc{
    1 &&
    3 \ar[ll] \ar[lldd] &&
    5 \ar[ll] \ar[lldd] & \ar@{->}[l] &
    &&
    6m-5 \ar@{-}[l] \ar@{-}[ld] &&
    6m-3 \ar[ll] \ar[lldd] &&
    6m-1 \ar[ll] \ar[lldd]
    \\
    && && &  \ar@{->}[lu] \ar@{->}[ld] & \cdots &
    \\
    2 &&
    4 \ar[ll] \ar[lluu] &&
    6 \ar[ll] \ar[lluu] & \ar@{->}[l] &
    &&
    6m-4 \ar@{-}[l] \ar@{-}[lu] &&
    6m-2 \ar[ll] \ar[lluu] &&
    6m \ar[ll] \ar[lluu]
  }
\]
Consider the repetitive category $\widehat{B(m)}$ of $B(m)$.
Then the Nakayama automorphism 
$\nu_{\widehat{B(m)}}$ of $\widehat{B(m)}$
admits an $m$-th root $\varphi_m$ such that
$(\varphi_m)^m = \nu_{\widehat{B(m)}}$.
Let $\Gamma(m)$ be the orbit algebra 
$\widehat{B(m)}/(\varphi_m)$ of $\widehat{B(m)}$ 
with respect to the infinite cyclic group $(\varphi_m)$
generated by $\varphi_m$
(see \cite{Sk2} for relevant definitions).

Then we obtain the following proposition.

\begin{proposition}
\label{prop:5.7}
The algebras $\Lambda(m,0)$ and $\Gamma(m)$ are isomorphic.
\end{proposition}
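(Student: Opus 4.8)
The plan is to construct an explicit isomorphism $\Lambda(m,0) \cong \Gamma(m)$ by first understanding both sides concretely, then matching them. I would begin by unwinding the definition of $\Gamma(m) = \widehat{B(m)}/(\varphi_m)$. Since $B(m)$ is a fully commutative algebra on the displayed ``ladder'' quiver with $6m$ vertices arranged in two rows, its repetitive category $\widehat{B(m)}$ has vertices indexed by $\bZ \times \{1,\dots,6m\}$ (copies of $B(m)$ linked by the degree-$+1$ part coming from the bimodule $D(B(m))$). The key structural fact is that the Nakayama automorphism $\nu_{\widehat{B(m)}}$ shifts by one full copy of $B(m)$, and it admits the stated $m$-th root $\varphi_m$ with $(\varphi_m)^m = \nu_{\widehat{B(m)}}$; passing to the orbit category under $(\varphi_m)$ therefore collapses the infinite repetition down to exactly $6$ vertices, matching the $6$ vertices of the tetrahedral quiver $Q$.

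Next I would identify the arrows and relations of $\Gamma(m)$ explicitly. The arrows of $\widehat{B(m)}$ are the arrows of each copy of $B(m)$ together with the ``connecting'' arrows from the duality bimodule; after taking the orbit under $\varphi_m$ these project down to the arrows $\nu,\mu,\alpha,\delta,\varepsilon,\varrho,\gamma,\beta,\xi,\eta,\sigma,\omega$ of $Q$. I expect the commutativity relations of $B(m)$ (the ``fully commutative'' structure) to produce, after folding, precisely the commutativity relations defining $\Lambda(m,0)$, namely $\gamma\delta = \beta\varepsilon$, $\delta\eta = \nu\omega$, and the others with $\lambda = 0$; and the socle relations built into $\widehat{B(m)}$ (which is symmetric) should yield the length-$3m$ zero relations $(\theta f(\theta) f^2(\theta))^{m-1}\theta f(\theta) g(f(\theta)) = 0$. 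The cleanest way to organize this is to define a functor (equivalently, a $K$-algebra homomorphism on the basic algebra) $\Phi : \Gamma(m) \to \Lambda(m,0)$ on generators, check it respects all defining relations, and then verify it is bijective.

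For the verification of bijectivity I would use a dimension count together with the explicit basis machinery already developed. By Corollary~\ref{cor:4.6} we know $\dim_K \Lambda(m,0) = 36m$, and the basis $\cB$ with its multiplication rules (properties (a)--(d) preceding Corollary~\ref{cor:4.6}) gives a completely transparent description of the target. On the $\Gamma(m)$ side, the orbit-category construction gives $\dim_K \Gamma(m) = \sum_{i} \dim_K (\text{the } i\text{-th indecomposable projective})$, and each projective over such a symmetric orbit algebra has a Loewy structure of length $3m$ paralleling the one for $\Lambda(m,0)$; a direct count should again give $36m$. Once $\Phi$ is shown to be surjective (the generators of $\Lambda(m,0)$ are in the image) and the dimensions agree, $\Phi$ is an isomorphism.

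The main obstacle I anticipate is the bookkeeping of the $m$-th root $\varphi_m$ of the Nakayama automorphism: one must pin down exactly how $\varphi_m$ permutes the vertices and arrows of $\widehat{B(m)}$, since it is this root --- rather than $\nu_{\widehat{B(m)}}$ itself --- that implements the $3$-fold ``twist'' responsible for the coherent orientation of the tetrahedron and for turning the $6m$ vertices of one period into the $6$ vertices of $Q$ with the correct $f$- and $g$-orbit structure. Establishing that $\varphi_m$ is well defined, that $(\varphi_m)^m = \nu_{\widehat{B(m)}}$, and that the induced folding sends the commutativity and socle relations of $\widehat{B(m)}$ to precisely the $\Lambda(m,0)$ relations is the delicate combinatorial heart of the argument; the remaining relation-checking and dimension count are then routine given the earlier results.
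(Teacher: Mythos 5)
The paper states Proposition~\ref{prop:5.7} without any proof (it is introduced by ``Then we obtain the following proposition'' and left as an assertion), so there is no argument of the authors' to measure you against; your strategy --- realize the orbit algebra $\widehat{B(m)}/(\varphi_m)$ as a bound quiver algebra via the Galois covering, match generators and relations with those of $\Lambda(m,0)$, and close with a dimension count against $\dim_K\Lambda(m,0)=36m$ from Corollary~\ref{cor:4.6} --- is the standard and appropriate one, and your orbit count ($6m$ vertices per period collapsing to $6m/m=6$ vertices) is correct.

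However, as written your proposal has a genuine gap, which you yourself flag: everything that actually carries the proof is deferred. You never write down $\varphi_m$ --- neither its action on the vertices $\{1,\dots,6m\}$ of one copy of $B(m)$ (it must shift by $6$ positions along the ladder, combined with the appropriate identification into the next copy, so that $m$ iterations give the full Nakayama shift), nor its action on arrows --- and without that explicit description you cannot identify the arrows of the quotient quiver with $\delta,\nu,\varepsilon,\varrho,\gamma,\beta,\xi,\eta,\sigma,\omega,\alpha,\mu$, cannot check that the pushed-down commutativity relations of $B(m)$ become exactly the twelve relations $\gamma\delta=\beta\varepsilon$, $\delta\eta=\nu\omega$, etc.\ of $\Lambda(m,0)$, and cannot verify that the zero relations of length $3m$ arise from the socle structure of $\widehat{B(m)}$. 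The dimension count on the $\Gamma(m)$ side is likewise only asserted: you would need to compute $\dim_K e_x\widehat{B(m)}$ for the six chosen orbit representatives and check each equals $6m$. None of these steps is in doubt as a matter of principle, but until they are carried out the proposal is a plan rather than a proof; the ``delicate combinatorial heart'' you identify is precisely the content of the proposition.
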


We would like to strees that, for any $\lambda \in K \setminus \{ 0 \}$,
the non-singular higher tetrahedral algebra 
$\Lambda(m,\lambda)$ is not the orbit algebra of
the repetitive category of an algebra.

\section{Proof of Theorem~\ref{th:main3}}\label{sec:proof3}

We show first that every simple $\Lambda$-module 
is periodic of period four. 
This will then tell us what the terms of a minimal 
projective bimodule resolution of $\Lambda$ must be
(see Proposition~\ref{prop:3.1}).
As for notation, we write $\Omega$ for  syzygies 
of right $\Lambda$-modules,   
and we write $\Omega_{\Lambda^e}$ for syzygies
of right $\Lambda^e$-modules ($\Lambda$-$\Lambda$-bimodules). 

\begin{proposition}
\label{prop:6.1} 
Each simple $\Lambda$-module is periodic of period four. 
There is an exact sequence
\[
  0\to S_i \to P_i \to  P_x\oplus P_y \ \to P_j\oplus P_k \to P_i \to S_i \to 0
\]
where the arrows adjacent to $i$ end at $j, k$ and start at $x, y$.
\end{proposition}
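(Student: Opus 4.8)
The plan is to prove periodicity of each simple module by exhibiting an explicit $4$-periodic exact sequence, and then to read off that the syzygies return to the original simple. First I would fix a vertex $i$ and analyze the minimal projective cover $P_i \to S_i$, whose kernel $\Omega(S_i) = \rad P_i$. The structure of $P_i$ is fully controlled by the basis $\cB_i$ constructed before Corollary~\ref{cor:4.6}: we know the Loewy structure of $e_i\Lambda$, that its socle is spanned by $\omega_i = X_i^m$, and that the two arrows starting at $i$ — call their targets $x$ and $y$ — give the top of $\rad P_i$ as $S_x \oplus S_y$. So the first step is to identify $\Omega(S_i) = \rad P_i$ and to show its projective cover is $P_x \oplus P_y$, giving $\Omega^2(S_i)$ as the kernel of $P_x \oplus P_y \to \rad P_i$.

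Second, I would compute $\Omega^2(S_i)$ explicitly using the relations. The key feature is that the defining relations are ``almost commutativity'' relations (e.g. $\gamma\delta = \beta\varepsilon + \lambda(\ldots)$, $\delta\eta = \nu\omega$, etc.), so the overlaps between $P_x$ and $P_y$ inside $\rad P_i$ are governed by these relations. I expect $\Omega^2(S_i)$ to be a bimodule-theoretically small module whose projective cover is $P_j \oplus P_k$, where $j,k$ are the \emph{sources} of the two arrows ending at $i$ (by the symmetry of the quiver under $f$ and $g$, and the near-self-duality coming from the symmetric algebra structure of Theorem~\ref{th:4.7}). The symmetry $\varphi$ of Lemma~\ref{lem:4.1} should reduce the number of vertices one must treat by hand, since $\varphi$ permutes the vertices in two orbits $\{5,4,2\}$ and $\{1,6,3\}$.

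Third, having set up $P_i \to P_x\oplus P_y \to P_j \oplus P_k \to P_i$, I would show $\Omega^4(S_i) \cong S_i$. The cleanest route is to use that $\Lambda$ is symmetric (Theorem~\ref{th:4.7}), so $\Omega$ on $\umod\Lambda$ is invertible and the sequence should be self-dual: the map $P_j \oplus P_k \to P_i$ has image $\Omega^3(S_i) = \rad^{?}P_i$ with $P_i/\Omega^3(S_i)$ having socle $S_i$, so that $\Omega^4(S_i) = \soc P_i \cong S_i$ using that $\soc(e_i\Lambda) = K\omega_i \cong S_i$. Concretely, I would verify that the composite four-term complex is exact by checking exactness at each $P$-term, which reduces to confirming that kernels and images match dimensions and tops — and here the explicit basis $\cB$ and property (d) (the pairing $b \mapsto \hat b$ with $b\hat b = \omega_i$) make the dimension bookkeeping mechanical.

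The main obstacle will be the precise determination of the differentials and hence of $\Omega^2(S_i)$, because the deformation term $\lambda(\ldots)^{m-1}$ in the relations means the maps are not simply the commutativity maps: the connecting homomorphisms carry correction terms in higher powers of $X_i$. The hard part is to show that these correction terms do not change the isomorphism type of the syzygies (so that periodicity holds for all $\lambda$), while being prepared that the \emph{period-four-ness} of the bimodule $\Lambda$ itself — part (iii)(iv) of Theorem~\ref{th:main3} — will fail exactly when $\lambda = 0$; for the present proposition, which asserts periodicity of the \emph{simple modules} only, I expect the argument to go through uniformly in $\lambda$, with the singular/non-singular dichotomy entering only at the bimodule level later.
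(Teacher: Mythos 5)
Your overall scaffolding matches the paper's: reduce to one simple per $\varphi$-orbit using Lemma~\ref{lem:4.1}, compute $\Omega^2(S_i)$ as the kernel of the cover $P_x\oplus P_y\to\rad P_i$ determined by the arrows into $i$, identify its projective cover via the two elements coming from the relations, and close the loop by a dimension count (the paper finishes by exhibiting $(\gamma,\alpha)\Lambda\cong\Omega^{-1}(S_1)$ inside $\Omega^3(S_1)$ of the same dimension $6m-1$, rather than by your self-duality/socle argument, but that difference is cosmetic).

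However, your last paragraph contains a genuine error that would sink the proof: you assert that the argument ``goes through uniformly in $\lambda$, with the singular/non-singular dichotomy entering only at the bimodule level.'' This is exactly backwards. The dichotomy enters precisely here: by Proposition~\ref{prop:6.5} and the equivalence (i)$\Leftrightarrow$(iv) of Theorem~\ref{th:main3}, the simple $\Lambda(m,0)$-modules are \emph{not} periodic, so Proposition~\ref{prop:6.1} can only be proved under the hypothesis $\lambda\neq 0$. The place where $\lambda\neq 0$ is used essentially is the claim that $\Omega^2(S_i)$ is generated by exactly the two elements $\phi,\psi$ read off from the relations. For $S_1$ one has $\phi\beta=-\psi\sigma-(0,\lambda X_5^{m-1}\eta\beta)$; when $\lambda\neq 0$ the correction term forces $\phi\beta\notin\phi\Lambda\cap\psi\Lambda$ (since $(0,X_5^{m-1}\eta\beta)$ cannot lie in $\psi\Lambda$ by the socle computation), which yields $\dim_K\phi\Lambda/(\phi\Lambda\cap\psi\Lambda)=2$ and hence $\Ker d_1=\phi\Lambda+\psi\Lambda$ by counting dimensions. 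When $\lambda=0$ the relation degenerates to $\phi\beta=-\psi\sigma\in\phi\Lambda\cap\psi\Lambda$, the dimension count fails ($\dim_K(\phi\Lambda+\psi\Lambda)\leq 6m<6m+1=\dim_K\Ker d_1$), $\Omega^2(S_i)$ needs more than two minimal generators, and the period-four resolution collapses. So the ``correction terms'' are not a nuisance to be shown harmless --- they are the mechanism that makes the proposition true, and your plan as written would fail at the determination of the minimal generators of $\Omega^2(S_i)$.
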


\begin{proof}
The automorphism  $\varphi$ of $\Lambda$  induces 
an equivalence of the module category $\mod \Lambda$, 
with two orbits on simple modules. 
We only need to prove periodicity for one simple from each orbit. 
We will consider $S_1$ and $S_4$.

\medskip

(1) 
We compute $\Omega^2(S_1)$ which we identify with the kernel 
of the map $d_1: P_6\oplus P_5\to P_1$ defined by
\[
  d_1(a, b) := \nu a + \delta b,
\]
for $a \in P_6$ and $b \in P_5$.
Since $\nu\omega = \delta\eta$ and $\nu \mu = \delta \xi$, 
the kernel contains the submodule 
generated by $\phi$ and $\psi$, where
\[
  \phi = (-\omega, \eta) \ \ \mbox{ and } \ \ \psi = (\mu, -\xi).
\]
We will show that $\Ker d_1 = \phi\Lambda + \psi \Lambda$.  
Since we have one inclusion, 
it suffices to show that both spaces have the same dimension, 
that is,  we must show
that $\phi \Lambda + \psi \Lambda$ has dimension $6m+1$. 
We observe that $\phi \Lambda$ is isomorphic to 
$\Omega^{-1}(S_4)$ since $\omega, \eta$ are
the arrows ending at vertex $4$. 
Similarly, $\psi \Lambda \cong \Omega^{-1}(S_3)$. 
In particular,
$\dim_K \phi \Lambda  = 6m-1 = \dim_K \psi \Lambda$. 
It follows that we must show that  
$\dim_K \phi \Lambda \cap \psi \Lambda = 6m-3$, that is, 
\[
  \dim_K \phi \Lambda/(\phi \Lambda\cap \psi \Lambda) = 2.
\]

(1a) 
We identify  the intersections of $\phi \Lambda$ and $\psi \Lambda$ 
with $0\oplus P_5$. 
We claim that each of $\phi \Lambda \cap (0\oplus P_5)$ 
and $\psi \Lambda\cap (0\oplus P_5)$ is 1-dimensional, 
spanned by $( 0, X_5^m)$.
Indeed, 
suppose $\phi p = (0, z)$ for some $p\in \Lambda$ and $0\neq z$. 
We may assume that $p$ is a monomial in the arrows. 
To have $\omega p=0$ the monomial $p$ must have
length $\geq 3m-1$. 
To have $\omega p =0$ and $\eta p = z\neq 0$, 
we must have that $p$ has length
$3m-1$ and ends at vertex $4$, 
and then $\eta p = X_5^m$. 
For the converse, take $p= X_4^{m-1}\gamma\delta$. 
Similarly one proves the second statement.

\medskip

(1b) We claim that $\phi J^2 + \phi \gamma K$ is contained 
in the intersection $\phi \Lambda\cap \psi \Lambda$. 
Namely, we have  $\phi \gamma = -\psi \alpha$, by the relations. 
Next, we have
\begin{align}
 \tag{*}
 \label{eq:*0}
 \phi \beta
  = (-\omega\beta, \eta\beta)
  = -\psi\sigma - (0, \lambda X_5^{m-1}\eta\beta).
\end{align}
Hence 
$\phi\beta\varepsilon = \psi\sigma\varepsilon - (0, \lambda X_5^m)$ 
(using Lemmas \ref{lem:4.3}, \ref{lem:4.4}, \ref{lem:4.5}).  
By (1a) above, this belongs to the intersection
and it follows from these that 
$\phi J^2 \subseteq \phi \Lambda \cap \psi \Lambda$.
We note that if $\lambda = 0$, then $\Omega^2(S_i)$
has more than two minimal generators,
and hence $S_i$ is not periodic of period $4$.
  
\medskip

(1c) 
Note that $\phi J^2 +  \phi\gamma K$ has dimension $6m-3$. 
We have the chain 
of submodules
\[
  \phi J^2 + \phi \gamma K 
   \subseteq \phi \Lambda\cap \psi \Lambda
   \subseteq \phi \Lambda ,
\]
and the quotient $\phi \Lambda /(\phi J^2 +  \phi \gamma K)$ 
is spanned by the cosets of $\phi$ and $\phi \beta$. 

Assume for a contradiction 
that $\phi\beta \in \phi \Lambda \cap \psi \Lambda$. 
Then, by (\ref{eq:*0}), we have 
$(0,X_5^{m-1}\eta\beta) \in \psi \Lambda$, but this contradicts (1a). 
So $\phi\beta$ is not in the intersection, and therefore the dimension 
of $\phi \Lambda/ (\phi \Lambda\cap \psi \Lambda)$ is $2$, as required.

\medskip

(1d) 
Now it is easy to see that $S_1$ has period four. 
Namely, define $d_2: P_4\oplus P_3\to \Omega^2(S_1)$ by
\[
  d_2(u, v):= \phi u + \psi v,
\]
for $u \in P_4$ and $v \in P_3$.
The kernel of $d_2$, that is, $\Omega^3(S_1)$ has dimension 
$2(6m)-(6m+1) = 6m-1$. 
We have seen that $\phi \gamma = - \psi\alpha$, and
therefore $(\gamma, \alpha)\Lambda\subseteq \Ker(d_2)$. 
This submodule is isomorphic to 
$\Omega^{-1}(S_1)$ and has dimension $6m-1$. 
We deduce that
\[
  \Omega^{-1}(S_1) \cong (\gamma, \alpha)\Lambda \cong \Omega^3(S_1).
\]
So $S_1$ is periodic of period dividing $4$, and then equal to $4$. 

\medskip

(2) 
We compute $\Omega^2(S_4)$, which we identify with 
the kernel of $d_1: P_1\oplus P_2\to P_4$ defined as
\[
  d_1(w,z) = \gamma w + \beta z,
\]
for $w \in P_1$ and $z \in P_2$.
This is analogous to (1), there is only a small difference in the formulae.
Using the relations, the kernel of $d_1$ contains $\phi$ and $\psi$,
where
\[
  \phi
  = \big(\delta, -\varepsilon -\lambda(\rho\omega\beta)^{m-1}\varepsilon\big) 
   \ \ \mbox{ and} \  \psi = (-\nu, \rho).
\]
By the same arguments as in (1), to prove that 
$\Ker(d_1) = \phi \Lambda + \psi \Lambda$, 
we must show that
$\dim_K \phi \Lambda/(\phi \Lambda\cap \psi \Lambda) = 2$.
We have $\phi \eta = -\psi \omega$, which is in the intersection, 
and we have
\[
  \phi\xi = -\psi\mu - (0, \lambda X_2^{m-1}\varepsilon\xi) .
\]
As before one shows that
$\phi J^2 = \psi J^2$ and hence is in the intersection. 
Suppose $\phi\xi $ is in the intersection. 
Then it follows that 
$(0, -\lambda X_2^{m-1}\varepsilon\xi)$ is in $\psi \Lambda$, 
which is a contradiction to the analogue of
(1a). 
It follows that $\phi \Lambda/\phi \Lambda\cap \psi \Lambda$ 
is 2-dimensional. 
Then as in (1d) one concludes that $S_4$ has $\Omega$-period four.
\end{proof}

We use the notation as in Section~\ref{sec:bimodule}, 
in particular the description of  
$\bP_0$ and $\bP_1$.
For the higher tetrahedral algebra, we need to specify $\bP_2$, 
which has generators
corresponding to the minimal relations involving paths of length two.
Each of these minimal relations has a term 
$\theta f(\theta)$ for $\theta$ an arrow, and this gives
a bijection between arrows and minimal relations involving 
paths of length two.
So we take 
\[
  \bP_2:= \oplus_{\theta \in Q_1} 
      \Lambda(e_{s(\theta)}\otimes e_{t(f(\theta))})\Lambda.
\]
We may denote the minimal relation with term $\theta f(\theta)$  
by $\mu_{\theta}$. Then the definition of
$R$ in Section~\ref{sec:bimodule} specializes to 
\[
 R: \bP_2\to \bP_1, \ \ R(e_{s(\theta)}\otimes e_{t(f(\theta))})
  := \pi(\mu_{\theta}).
\]

\begin{lemma}
\label{lemma:6.2} 
The homomorphism $R: \bP_2\to \bP_1$ induces a projective cover of
$\Omega^2_{\Lambda^e}(\Lambda)$ in $\mod \Lambda^e$. 
In particular, $\Omega^3_{\Lambda^e}(\Lambda) = \Ker R$.
\end{lemma}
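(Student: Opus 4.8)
The plan is to show that the map $R \colon \bP_2 \to \bP_1$ is a projective cover of $\Omega^2_{\Lambda^e}(\Lambda) = \Ker d_1$, i.e.\ of $\Ker d = \Ker R$'s target inside $\bP_1$. By Lemma~\ref{lem:3.4} we already know that $\Im R \subseteq \Ker d$, so two things remain: (a) that $R$ maps \emph{onto} $\Ker d$, and (b) that $R$ is a projective cover, equivalently that $\Im R \subseteq \rad \bP_1$ (automatic, since each $\pi(\mu_\theta)$ is a combination of tensors with at least one factor in $\rad \Lambda$) and that no proper direct summand of $\bP_2$ already surjects onto $\Ker d$, i.e.\ the chosen generators $\pi(\mu_\theta)$ are a \emph{minimal} generating set of the bimodule $\Ker d$. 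The crux is therefore the surjectivity and minimality of this particular family of $12$ generators.

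First I would exploit Proposition~\ref{prop:3.1} together with the period-four computation from Proposition~\ref{prop:6.1}. Having established $\dim_K \Ext^1_\Lambda(S_i,S_j) = \#\{\text{arrows } i \to j\}$ and $\dim_K \Ext^2_\Lambda(S_i,S_j) = \#\{\text{relations from } i \text{ to } j\}$, Proposition~\ref{prop:3.1} forces $\bP_2$ to be exactly $\bigoplus_{\theta \in Q_1} P\big(s(\theta), t(f(\theta))\big)$ as a term in the minimal bimodule resolution --- so the \emph{rank} of $\bP_2$ is correct by an $\Ext$-dimension count, and it suffices to check that the explicit map $R$ built from the relations $\mu_\theta$ realizes this term. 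The key input is that the simple modules are periodic of period $4$ (Proposition~\ref{prop:6.1}), which pins down $\dim_K \Ext^2_\Lambda(S_i,S_j)$ via the four-term sequence there: each minimal relation with a length-two term $\theta f(\theta)$ contributes exactly one generator, and these are in bijection with arrows.

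Concretely, I would verify surjectivity by a dimension argument mirroring the one in Proposition~\ref{prop:6.1}. For each vertex pair $(i,j)$, the $(e_i \otimes e_j)$-component of $\Ker d$ is computed by restricting the short complex $\bP_2 \to \bP_1 \to \bP_0$ to idempotents; one checks that $\dim_K e_i (\Ker d) e_j$ equals the total contribution of the relations $\mu_\theta$ landing in that component. Since $\Lambda$ is symmetric with $\dim_K \Lambda = 36m$ (Theorem~\ref{th:main1}), one has $\dim_K P(i,j) = \dim_K e_i \Lambda \cdot \dim_K \Lambda e_j$, so all the relevant dimensions are explicit; using the basis $\cB$ constructed before Corollary~\ref{cor:4.6} and property (c) of that basis, the dimension of $\Ker d$ can be read off by comparing $\dim_K \bP_1 - \dim_K \Im d$. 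Matching this against $\dim_K \Im R$, computed from the twelve $\pi(\mu_\theta)$, gives surjectivity. Minimality then follows because each $\pi(\mu_\theta)$ has its $\theta f(\theta)$-term linearly independent modulo $\rad \Lambda \cdot \bP_1 + \bP_1 \cdot \rad \Lambda$, so the induced map $\topp \bP_2 \to \topp(\Ker d)$ is an isomorphism.

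\textbf{The main obstacle} will be controlling the interaction between the ``commutativity'' relations (the twelve length-two relations, several of which carry the deformation term $\lambda(\cdots)^{m-1}$) and the long zero relations of length $3m$. A naive generator count could overshoot: one must confirm that the long relations $\big(\theta f(\theta) f^2(\theta)\big)^{m-1}\theta f(\theta) g(f(\theta)) = 0$ do \emph{not} force extra minimal generators of $\Ker d$ beyond the twelve $\pi(\mu_\theta)$, i.e.\ that each $\pi$ of a long relation already lies in the bimodule generated by the $\pi(\mu_\theta)$. This is exactly where Lemmas~\ref{lem:4.3}--\ref{lem:4.5} do the work: they let me rewrite any length-$k$ relation ($k > 2$) as a right/left multiple of a length-two relation modulo socle, collapsing the syzygy onto the twelve generators. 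Verifying this reduction --- that the span of $\{\pi(\mu_\theta)\}$ as a bimodule is all of $\Ker d$, with the $\lambda$-terms landing in the socle part (property (d) of $\cB$) and hence not obstructing the count --- is the delicate step and the heart of the proof.
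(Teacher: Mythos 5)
Your proposal is essentially sound and rests on the right pillars (Lemma~\ref{lem:3.4} for $\Im R\subseteq\Ker d$, Proposition~\ref{prop:3.1} plus Proposition~\ref{prop:6.1} to pin down the rank of $\bP_2$, and Lemmas~\ref{lem:4.3}--\ref{lem:4.5} to see that the length-$3m$ zero relations do not contribute extra minimal generators), but the route you take for the crucial surjectivity step differs from the paper's. The paper simply refers to Lemma~7.2 of \cite{ESk-WSA}, where the standard argument is not a global dimension count but a reduction to the one-sided resolutions already computed: since $\Omega^2_{\Lambda^e}(\Lambda)$ is projective as a left $\Lambda$-module (Lemma~\ref{lem:3.2}), applying $S_i\otimes_\Lambda-$ to $0\to\Ker d\to\bP_1\to\Omega^1_{\Lambda^e}(\Lambda)\to 0$ stays exact and identifies $S_i\otimes_\Lambda\Ker d$ with $\Omega^2(S_i)$; under this identification the images of the $\pi(\mu_\theta)$ with $s(\theta)=i$ are exactly the generators $\phi,\psi$ of $\Omega^2(S_i)$ produced in Proposition~\ref{prop:6.1}, so $\Ker d=\Im R+\rad(\Lambda)\cdot\Ker d$ and Nakayama finishes. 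This recycles the hard computation of Proposition~\ref{prop:6.1} verbatim, whereas your plan would redo it as a dimension match against $\dim_K\Im R$, which is the one quantity your sketch does not actually compute. Two smaller points: your minimality criterion is stated with the wrong quotient --- every element of $\Ker d$ already lies in $\rad\Lambda\cdot\bP_1+\bP_1\cdot\rad\Lambda=\rad\bP_1$, so ``the $\theta f(\theta)$-term is independent modulo $\rad\bP_1$'' is vacuous; what you need is independence in $\Ker d/\rad_{\Lambda^e}(\Ker d)$, and in fact minimality is automatic once surjectivity is known, because the rank of $\bP_2$ already equals $\sum_{i,j}\dim_K\Ext^2_\Lambda(S_i,S_j)$ by Propositions~\ref{prop:3.1} and~\ref{prop:6.1}. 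Neither issue is fatal, but the surjectivity step should be discharged by the tensor-and-Nakayama argument rather than left as an unexecuted dimension comparison.
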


\begin{proof}
This is similar as that of Lemma~7.2 of \cite{ESk-WSA}. 
\end{proof}

By Propositions \ref{prop:3.1} and \ref{prop:6.1},
we can take 
$\bP_3 = \oplus_{i\in Q_0} \Lambda(e_i\otimes e_i)\Lambda$.  
For each vertex $i$ of $Q$, 
we define an element $\psi_i$ as follows.
Let $\tau, \bar{\tau}$ be the arrows starting at $i$, and let
$\theta, \bar{\theta}$ be the arrows ending at $i$. 
Set
\[
  \psi_i:= (e_i\otimes e_{t(\theta)})\theta 
     + (e_i\otimes e_{t(\bar{\theta})})\bar{\theta}
     - \tau(e_{t(\tau)}\otimes e_i)
     - \bar{\tau}(e_{t(\bar{\tau})}\otimes e_i).
\]
Then we define a $\Lambda^e$-module homomorphism 
$S: \bP_3\to \bP_2$ by 
\[
   S(e_i\otimes e_i):= \psi_i,  \ \mbox{for } i\in Q_0.
\]

\begin{lemma}
\label{lemma:6.3}  
The homomorphism $S:\bP_3\to \bP_2$ 
induces a projective cover of 
$\Omega^3_{\Lambda^e}(\Lambda)$ in $\mod \Lambda^e$. 
In particular, we have 
$\Omega^4_{\Lambda^e}(\Lambda) = \Ker(S).$
\end{lemma}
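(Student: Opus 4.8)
The plan is to verify that $S$ has the two defining properties of a projective cover of $\Omega^3_{\Lambda^e}(\Lambda)=\Ker R$: that $\Im S\subseteq \Ker R$, and that the induced surjection onto $\Ker R$ is essential while $\bP_3$ carries the correct top. The shape of $\bP_3$ is already forced. By Proposition~\ref{prop:6.1} each simple $\Lambda$-module $S_i$ is periodic of period $4$ with minimal projective resolution whose third term is $P_i$, so $\dim_K\Ext^3_\Lambda(S_i,S_j)=\delta_{ij}$. Proposition~\ref{prop:3.1} then shows that the projective cover of $\Omega^3_{\Lambda^e}(\Lambda)$ is exactly $\bigoplus_{i\in Q_0}P(i,i)=\bP_3$, and in particular $\topp\Omega^3_{\Lambda^e}(\Lambda)\cong\bigoplus_{i\in Q_0}S(i,i)$, a sum of six pairwise non-isomorphic simple $\Lambda^e$-modules each occurring with multiplicity one.

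First I would check $RS=0$, that is $R(\psi_i)=0$ for every vertex $i$. Expanding $R(\psi_i)$ by the rule $R(e_{s(\theta)}\otimes e_{t(f(\theta))})=\pi(\mu_\theta)$ together with the one-sided actions turns this into an identity in $\bP_1$, which I would confirm term by term using the defining relations of $\Lambda$ and the length-$3m$ socle relations; this is analogous to the corresponding computation in \cite{ESk-WSA}. It relies on the explicit products $X_i=\theta f(\theta)f^2(\theta)$ and on the equalities collected in Lemmas~\ref{lem:4.3}--\ref{lem:4.5}, which I would use to replace long paths by suitable powers $X_i^{k}$ and to annihilate the terms of length $\geq 3m$ that leave a vertex. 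Having verified the cancellation for one representative of each $\varphi$-orbit of vertices, the automorphism $\varphi$ of Lemma~\ref{lem:4.1} propagates it to all $i$. This yields $\Im S\subseteq\Ker R=\Omega^3_{\Lambda^e}(\Lambda)$, so $S$ factors as $\bP_3\xrightarrow{\bar S}\Omega^3_{\Lambda^e}(\Lambda)\hookrightarrow\bP_2$.

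It then remains to show $\bar S$ is a projective cover, and here a single observation makes the argument short. By Lemma~\ref{lemma:6.2}, $R$ is a projective cover, so $\Omega^3_{\Lambda^e}(\Lambda)=\Ker R\subseteq\rad_{\Lambda^e}\bP_2$, and consequently $\rad_{\Lambda^e}\big(\Omega^3_{\Lambda^e}(\Lambda)\big)\subseteq\rad^2_{\Lambda^e}\bP_2$. Each generator $\psi_i$ lies in $e_i\bP_2e_i$ and, by its very definition, is a sum of four terms, each of the form (generator of $\bP_2$)$\cdot$(arrow) or (arrow)$\cdot$(generator of $\bP_2$); these terms involve distinct summands and do not cancel modulo $\rad^2_{\Lambda^e}\bP_2$, so $\psi_i\in\rad_{\Lambda^e}\bP_2\setminus\rad^2_{\Lambda^e}\bP_2$. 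Hence the image of $\psi_i$ in $\topp\Omega^3_{\Lambda^e}(\Lambda)$ is a non-zero element of the $S(i,i)$-component. As the six components are pairwise distinct and each has multiplicity one, the classes of $\psi_1,\dots,\psi_6$ form a basis of $\topp\Omega^3_{\Lambda^e}(\Lambda)$, so $\bar S$ induces an isomorphism on tops. By Nakayama's lemma $\bar S$ is therefore surjective with $\Ker\bar S\subseteq\rad_{\Lambda^e}\bP_3$, i.e.\ $\bar S\colon\bP_3\to\Omega^3_{\Lambda^e}(\Lambda)$ is a projective cover, and $\Omega^4_{\Lambda^e}(\Lambda)\cong\Ker S$.

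The main obstacle is the first step, the identity $R(\psi_i)=0$. Although conceptually routine, it is the one place where one must work directly with the defining relations and the socle behaviour of long paths, and extra care is needed because the relations of $\Lambda$ (and hence the elements $\pi(\mu_\theta)$) carry the deformation terms $\lambda(\cdots)^{m-1}(\cdots)$, whose contributions have to be shown to cancel or to vanish by Lemma~\ref{lem:4.5}. By contrast, once this computation is in hand, the passage to a projective cover is essentially formal, resting only on the radical-filtration comparison above and on the Ext computation supplied by Propositions~\ref{prop:3.1} and~\ref{prop:6.1}.
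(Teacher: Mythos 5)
Your proposal is correct and follows essentially the same route as the paper: the shape of $\bP_3$ is read off from Propositions~\ref{prop:3.1} and~\ref{prop:6.1}, the fact that $\psi_i\notin(\rad\bP_2)^2$ reduces everything to checking $R(\psi_i)=0$, and the automorphism $\varphi$ cuts this down to one vertex from each orbit (the paper takes $i=1$ and $i=4$). The only difference is that the explicit cancellation computation for $R(\psi_1)$ and $R(\psi_4)$ — in particular the matching of the $\lambda(\cdots)^{m-1}$ terms via $\pi$ — is carried out in full in the paper, whereas you describe it but do not execute it; the method you describe is exactly the one used.
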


\begin{proof}
We know that the kernel of $R$ is 
$\Omega^3_{\Lambda^e}(\Lambda)$, and we know that
it has minimal generators corresponding to the vertices of $Q$.
As well, from the definition, the element $\psi_i$ does not lie in  
$(\rad \bP_2)^2$. 
Therefore, it is enough to show that $R(\psi_i) = 0$ for all $i$. 
 
The algebra automorphism $\varphi$  of $\Lambda$ defined in 
Section~\ref{sec:proof1},   
extends to an automorphism of $\Lambda^e$. 
One checks that it  commutes with the map $R$ and that it takes 
$\psi_i$ to $\psi_{\varphi(i)}$. 
So it is enough to take $i=1$ and $i=4$. 

\medskip

(1) 
We compute $R(\psi_1)$. This is equal to
\begin{align*}
  R\big((e_1\otimes e_4)\gamma &+ (e_1\otimes e_3)\alpha - \nu(e_6\otimes e_1) - \delta(e_5\otimes e_1)\big)\cr
 = \ & \pi(\delta\eta - \nu\omega)\gamma + \pi(\nu\mu - \delta\xi)\alpha
 -\nu \big(\pi(\mu\alpha -\omega\gamma)\big)
 - \delta\big(\pi(\eta\gamma - \xi\alpha)\big)\cr
 = \ & (e_1\otimes \eta\gamma + \delta\otimes \gamma - e_1\otimes \omega\gamma - \nu\otimes \gamma)\cr
 &+ (e_1\otimes \mu\alpha + \nu\otimes \alpha - e_1\otimes \xi\alpha - \delta\otimes \alpha)\cr
 &- (\nu\otimes \alpha + \nu\mu\otimes e_1 - \nu\otimes \gamma - \nu\omega \otimes e_1)\cr
 & - (\delta\otimes \gamma + \delta\eta\otimes e_1 - \delta\otimes \alpha - \delta\xi\otimes e_1).
\end{align*}
The terms of the form $\alpha_1\otimes \alpha_2$ 
for $\alpha_i$ arrows, cancel. 
The terms in $(e_1\otimes e_5)\Lambda$ are 
\[
 e_1\otimes \eta\gamma - e_1\otimes \xi\alpha)
  = e_1\otimes (\eta\gamma - \xi\alpha)
  = 0 .
\]
Similarly, there are two terms in $(e_1\otimes e_6)\Lambda$ 
and two terms in $\Lambda(e_4\otimes e_1)$ and two terms
 in $\Lambda(e_3\otimes e_1)$, and they all cancel.  
Hence $R(\psi_1) = 0$.
\medskip

 (2) We compute $R(\psi_4)$. This is equal to 
\begin{align}
\tag{*}
\label{eq:*} 
  R\big((e_4\otimes e_5)\eta &+ (e_4\otimes e_6)\omega
     - \gamma(e_1\otimes e_4) - \beta(e_2\otimes e_4)\big)\\
\notag
 &= \pi(\gamma\delta - \beta\varepsilon 
    - \lambda X_4^{m-1}\beta\varepsilon)\eta 
    + \pi(\beta\rho  - \gamma\nu)\omega\\
\notag
 &\ \ \ -\gamma \big(\pi(\delta\eta - \nu\omega)\big)
   - \beta\big(\pi(\rho\omega - \varepsilon\eta 
   - \lambda X_2^{m-1}\varepsilon\eta)\big) 
   .
\end{align}
We must choose a version of $X_4$ and of $X_2$. 
It is natural to take $X_4 = \beta\rho\omega$ and
$X_2 = \rho\omega\beta$. 
  We continue the calculation. 
With this,  
(\ref{eq:*})
is equal to
 \begin{align*}
 & (e_4\otimes \delta\eta + \gamma\otimes \eta - e_4\otimes \varepsilon\eta - \beta\otimes \eta) - \lambda \pi\big((\beta\rho\omega)^{m-1}\beta\varepsilon\big)\eta\cr
 &\ + \ (e_4\otimes \rho\omega + \beta\otimes \omega - e_4\otimes \nu\omega - \gamma\otimes \omega)\cr
 & \ - \  (\gamma\otimes \eta + \gamma\delta\otimes e_4 - \gamma\otimes \omega - \gamma\nu\otimes e_4)\cr
 & - (\beta\otimes \omega + \beta\rho \otimes e_4 - \beta\otimes \eta - \beta\varepsilon\otimes e_4) + \lambda\beta\Big(\pi\big((\rho\omega\beta)^{m-1}\varepsilon\eta\big)\Big) .
 \end{align*}
The terms of the form $\alpha_1\otimes \alpha_2$ 
with $\alpha_i$ arrows all cancel. 
Using the relations
$\delta\eta = \nu\omega$ and $\beta\rho = \gamma\nu$, 
four of the other terms cancel. 
This leaves 
\[ 
  -e_4\otimes \varepsilon\eta + e_4\otimes \rho\omega 
  - \lambda \pi  \big((\beta\rho\omega)^{m-1}\beta\varepsilon\big)\eta
  - \gamma\delta\otimes e_4  + \beta\varepsilon\otimes e_4 
  + \lambda \beta \pi\big((\rho\omega\beta)^{m-1}\varepsilon\eta\big).
\]

The first two terms combine, and the fourth and fifth term 
combine, and we can rewrite the expression as
\begin{align}
 \tag{**}
 \label{eq:**}
  \lambda (e_4\otimes X_2^{m-1}\varepsilon\eta) 
  - \lambda \pi\big((\beta\rho\omega)^{m-1}\beta\varepsilon\big)\eta
  - \lambda (X_4^{m-1}\beta\varepsilon \otimes e_4) 
  + \lambda \beta \pi\big((\rho\omega\beta)^{m-1}\varepsilon\eta\big).
\end{align}
Now we combine the second and fourth term of (\ref{eq:**}), 
and we  expand both. 
All terms except the ones $-\otimes e_4$ and $e_4\otimes -$ 
cancel, and we are left with
\begin{align}
 \tag{***}
 \label{eq:***}
  \lambda\big((\beta\rho\omega)^{m-1}\beta\varepsilon
   \otimes e_4 - e_4
   \otimes (\rho\omega\beta)^{m-1}\varepsilon\eta \big)
   .
\end{align}
The first term of (\ref{eq:***}) is the negative of  
the third term in (\ref{eq:**}) since  $\beta\rho\omega=X_4$. 
The second term of (\ref{eq:***})
is the negative of the first term of (\ref{eq:**}) 
since $\rho\omega\beta = X_2$. 
Hence, everything cancels and 
$R(\psi_4) = 0$, as required.
\end{proof}

\begin{theorem} 
\label{th:6.4} 
There is an isomorphism 
$\Omega^4_{\Lambda^e}(\Lambda) \cong  \Lambda$ 
in $\mod \Lambda^e$. 
\end{theorem}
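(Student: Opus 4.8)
The plan is to push the minimal projective bimodule resolution of $\Lambda$ one step beyond Lemma~\ref{lemma:6.3} and then read off the isomorphism. First I would pin down $\bP_4$. Since every simple module $S_i$ is $\Omega$-periodic of period $4$ by Proposition~\ref{prop:6.1}, we have $\Omega^4(S_i)\cong S_i$, so $\dim_K\Ext^4_\Lambda(S_i,S_j)=\delta_{ij}$; hence Proposition~\ref{prop:3.1} gives $\bP_4=\bigoplus_{i\in Q_0}P(i,i)=\bP_0$. By Lemma~\ref{lemma:6.3} we already have the exact sequence $\bP_3\xrightarrow{S}\bP_2\xrightarrow{R}\bP_1\xrightarrow{d}\bP_0\xrightarrow{d_0}\Lambda\to 0$ with $\Omega^4_{\Lambda^e}(\Lambda)=\Ker S$, so the task reduces to identifying the bimodule $\Ker S$ with $\Lambda$.

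Before constructing the last differential I would record a dimension count that both guides and eventually closes the argument. From the basis $\cB$ of Section~\ref{sec:proof1} one has $\dim_K\Lambda e_i=\dim_K e_i\Lambda=6m$ for every vertex $i$, so each indecomposable projective bimodule $P(i,j)=\Lambda e_i\otimes e_j\Lambda$ has dimension $36m^2$. As $Q$ has six vertices and twelve arrows, $\dim_K\bP_0=\dim_K\bP_3=6\cdot 36m^2$ and $\dim_K\bP_1=\dim_K\bP_2=12\cdot 36m^2$; taking the alternating sum along the exact sequence above yields $\dim_K\Omega^4_{\Lambda^e}(\Lambda)=36m=\dim_K\Lambda$.

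The heart of the proof is to promote this numerical coincidence to an actual isomorphism. The conceptual picture is that, $\Lambda$ being self-injective, $M:=\Omega^4_{\Lambda^e}(\Lambda)$ is projective as a left and as a right module (Lemma~\ref{lem:3.2}), has no projective direct summand (minimality of the resolution), and behaves as an invertible $\Lambda$-$\Lambda$-bimodule; such a bimodule is of the form ${}_1\Lambda_\sigma$, the bimodule $\Lambda$ with right action twisted by some $\sigma\in\operatorname{Aut}(\Lambda)$, and automatically has dimension $36m$. It then remains to show the twist is trivial, so that ${}_1\Lambda_\sigma\cong\Lambda$; here the symmetry of $\Lambda$ (Theorem~\ref{th:4.7}) is decisive, since on a symmetric algebra the relevant twist is inner. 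To make this effective I would construct the fourth differential $d_4:\bP_4=\bP_0\to\bP_3$ explicitly, setting $d_4(e_i\otimes e_i)=\chi_i$ for elements $\chi_i\in\bP_3$ dictated by the self-duality of the resolution of the symmetric algebra $\Lambda$ (which pairs $\bP_n$ with $\bP_{3-n}$, so that $S$ is dual to $d$ and $d_4$ is dual to $d_0$) and by the automorphism $\varphi$. Verifying $S\circ d_4=0$ shows $\Im d_4\subseteq\Ker S$, while checking that $d_4$ avoids $(\rad\bP_3)^2$ in each component shows it is a projective cover of $M$; from the explicit $d_4$ one then reads off $\sigma$ and confirms it is inner, completing $\Omega^4_{\Lambda^e}(\Lambda)\cong\Lambda$.

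The main obstacle is the verification $S\circ d_4=0$: guessing the correct $\chi_i$ and checking this identity is a lengthy computation in $\bP_2$ that must correctly absorb the deformation terms $\lambda(\cdots)^{m-1}(\cdots)$ coming from the defining relations. These are exactly the terms that made the computation of $R(\psi_4)$ in Lemma~\ref{lemma:6.3} delicate, and the non-singularity $\lambda\in K^*$ is precisely what forces them to cancel (for $\lambda=0$ the simples are not periodic, by the remark in the proof of Proposition~\ref{prop:6.1}, so no such isomorphism can exist). The automorphism $\varphi$ reduces the work to two representative vertices, say $i=1$ and $i=4$, mirroring the two cases already treated in Proposition~\ref{prop:6.1} and Lemma~\ref{lemma:6.3}, which keeps the bookkeeping manageable.
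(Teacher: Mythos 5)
Your setup is sound and your dimension count is correct (it is essentially the paper's own rank argument for $\Omega^4_{\Lambda^e}(\Lambda)$ being free of rank one on each side), but there is a genuine gap at the step you call ``the heart of the proof.'' Knowing that $M=\Omega^4_{\Lambda^e}(\Lambda)$ is an invertible bimodule of the form ${}_1\Lambda_\sigma$ (which already requires the result of Green--Snashall--Solberg that the paper cites only as an alternative route) does not reduce to showing ``the twist is inner because $\Lambda$ is symmetric'': that implication is not a theorem. Symmetry of $\Lambda$ controls the Nakayama automorphism, not the automorphism $\sigma$ arising from the fourth syzygy, and such a $\sigma$ can be genuinely outer for self-injective algebras (which is exactly why periods can be proper multiples of the length of one ``turn'' of the resolution). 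You never actually produce the bimodule map realizing the isomorphism, only a promise that suitable elements $\chi_i$ exist and that $\sigma$ can be ``read off.'' The paper closes this gap concretely: using the symmetrizing form from Theorem~\ref{th:4.7} and the basis $\cB$, it defines $\xi_i=\sum_{b\in\cB_i} b\otimes b^{*}\in\bP_3$ (a Casimir-type element built from the dual basis), checks that $e_i\mapsto\xi_i$ gives a monomorphism of $\Lambda$-$\Lambda$-bimodules $\Lambda\to\bP_3$ with $S(\xi_i)=0$, and then the rank count you performed shows this monomorphism onto $\Ker S$ is an isomorphism. This is where the symmetry is actually decisive --- it guarantees that the untwisted bimodule $\Lambda$ embeds --- and it is the ingredient your proposal is missing.

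A secondary, smaller point: you locate the main computational difficulty in verifying $S\circ d_4=0$ and in absorbing the deformation terms $\lambda(\cdots)^{m-1}(\cdots)$ there. In fact those terms are dealt with one step earlier, in the verification $R(\psi_i)=0$ of Lemma~\ref{lemma:6.3}; the identity $S(\xi_i)=0$ is a formal consequence of general properties of dual bases and uses no details of the relations of $\Lambda$. Non-singularity enters only through Proposition~\ref{prop:6.1} (periodicity of the simple modules, which fixes the shape of the resolution), not through the final cancellation.
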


\begin{proof}
This is similar as in the proof of Theorem 7.4 in \cite{ESk-WSA}. 
We have defined a symmetrizing bilinear form of $\Lambda$
in the proof of Theorem~\ref{th:4.7}. 
We define elements $\xi_i\in \bP_3$  by 
\[
  \xi_i = \sum_{b\in \cB_i} b\otimes b^* ,
\]
where $\{ b^*: b\in \cB\}$ is the dual basis corresponding 
to $\cB$, defined by $(-,-)$.  
As in \cite{ESk-WSA}, it follows that 
the map 
\[
  \theta: \Lambda \to \bP_3, 
  \ \mbox{with} \  \theta(e_i) = \xi_i
  \ \mbox{for all $i \in Q_0$},
\]
is a monomorphism of $\Lambda$-$\Lambda$-bimodules. 
Moreover, one shows that $S(\xi_i)=0$, exactly as in \cite{ESk-WSA}. 
This only uses general properties
of the dual basis and no details on a specific  algebra. 
Furthermore, $\Omega^4_{\Lambda^e}(\Lambda)$ is free 
of rank $1$ as a left or right $\Lambda$-module. 
Namely, we have the exact sequence of bimodules 
\[
  0 \to \Omega^4_{\Lambda^e}(\Lambda) 
  \to \bP_3 \to \bP_2 \to \bP_1 \to \bP_0 \to \Lambda\to 0.
\]
We have  $\bP_0\cong \bP_3$, and moreover $\bP_1$ and $\bP_2$ 
have obviously the same rank as free $\Lambda$-modules on each side.   
By the exactness, it follows
that $\Lambda$ and $\Omega_{\Lambda^e}^4(\Lambda)$ 
have the same rank. 
Therefore, the map $\theta$ gives an isomorphism of $\Lambda$ 
with $\Omega^4_{\Lambda^e}(\Lambda)$. 

Alternatively, 
for the last step one may apply  \cite{GSS} to show  
that $\Omega_{\Lambda^e}^4(\Lambda)$ must be isomorphic 
to $_1\Lambda_{\sigma}$
for some algebra automorphism $\sigma$, and therefore 
has rank $1$ on each side.
\end{proof}

Theorem~\ref{th:main3} follows from
Proposition~\ref{prop:6.1},
Theorem~\ref{th:6.4},
and the following proposition.

\begin{proposition}
\label{prop:6.5} 
Let $A = \Lambda(m,0)$. 
Then $\mod A$ does not admit a periodic simple module.
\end{proposition}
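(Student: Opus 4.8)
The plan is to reduce non-periodicity to unbounded growth of Betti numbers, and then to read off this growth from the Galois covering of $\Lambda(m,0)$ by a repetitive category. Since $A=\Lambda(m,0)$ is symmetric, hence self-injective, any periodic module $M$ of period $n$ satisfies $\Omega^{n}(M)\cong M$, so the sequence of minimal numbers of generators $b_{k}(M):=\dim_{K}\topp\,\Omega^{k}(M)$ is $n$-periodic and, in particular, bounded. Because the automorphism $\varphi$ of Lemma~\ref{lem:4.1} has exactly two orbits on the simple modules, with representatives $S_{1}$ and $S_{4}$, it suffices to prove that $b_{k}(S_{1})$ and $b_{k}(S_{4})$ are unbounded.

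First I would redo the computation of $\Omega^{2}(S_{1})$ from Proposition~\ref{prop:6.1} with $\lambda=0$. The only change is that the correction term $(0,\lambda X_{5}^{m-1}\eta\beta)$ in (\ref{eq:*0}) now vanishes, so that $\phi\beta=-\psi\sigma$ already lies in $\phi\Lambda\cap\psi\Lambda$. A dimension count then gives $\dim_{K}(\phi\Lambda+\psi\Lambda)=6m$, which is strictly smaller than $\dim_{K}\Ker d_{1}=6m+1$. Hence $\phi$ and $\psi$ do not generate $\Omega^{2}(S_{1})$, and $b_{2}(S_{1})\geq 3$ (and symmetrically $b_{2}(S_{4})\geq 3$). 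This already reproves that $S_{i}$ is not periodic of period $4$, but by itself it does not exclude larger periods; moreover the extra generator is not a simple summand, since (as in step (1a) of Proposition~\ref{prop:6.1}) the whole socle of $P_{6}\oplus P_{5}$ already lies in $\phi\Lambda$, so no cheap summand argument is available.

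To obtain genuine unboundedness I would pass to the covering. By Proposition~\ref{prop:5.7} we have $A\cong\Gamma(m)=\widehat{B(m)}/(\varphi_{m})$ with $(\varphi_{m})^{m}=\nu_{\widehat{B(m)}}$, and the associated Galois covering $F:\widehat{B(m)}\to A$ has a push-down functor that carries projectives to projectives, commutes with $\Omega$, and (for modules whose support lies in a fundamental domain) preserves tops. Each $S_{i}$ is the push-down of a simple $\widehat{B(m)}$-module $\widetilde{S}_{i}$, so $b_{k}(S_{i})$ is governed by the number of generators of $\Omega^{k}_{\widehat{B(m)}}(\widetilde{S}_{i})$. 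Under Happel's equivalence $\umod\widehat{B(m)}\simeq D^{b}(\mod B(m))$ one has $\Omega^{-1}=[1]$, while $\varphi_{m}$ induces a standard autoequivalence $\Phi$ with $\Phi^{m}$ the Serre functor $\mathbb{S}$. If some $S_{i}$ were periodic of period $n$, the covering would force $\widetilde{S}_{i}[-n]\cong\Phi^{k}\widetilde{S}_{i}$ for some $k$; iterating and using $\Phi^{m}=\mathbb{S}$ yields $\mathbb{S}^{k}\widetilde{S}_{i}\cong\widetilde{S}_{i}[-mn]$. As the simples generate $D^{b}(\mod B(m))$, this would make $B(m)$ a (twisted) fractionally Calabi--Yau algebra, hence iterated tilted of Dynkin type; but $B(m)$ is representation-infinite of non-polynomial growth (the content underlying Proposition~\ref{prop:5.6}), a contradiction. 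Therefore no $\widetilde{S}_{i}$, and hence no $S_{i}$, is periodic, and in fact $b_{k}(S_{i})\to\infty$.

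The main obstacle is this last paragraph. Two points need real care: first, that the push-down preserves the \emph{count} of minimal generators even once the syzygies of $\widetilde{S}_{i}$ spread across several $\varphi_{m}$-translates of a fundamental domain, so that no two generators are identified downstairs; and second, the precise identification of $\Phi$ together with the verification that $B(m)$ is not fractionally Calabi--Yau. An alternative, covering-free route would be to propagate the extra-generator phenomenon of the second paragraph inductively through $\Omega^{2}$, showing that $b_{2k}(S_{i})$ is strictly increasing; there the difficulty is to control $\Omega^{2}$ on the successively larger syzygies rather than only on the simple $S_{i}$.
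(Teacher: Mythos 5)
Your first two paragraphs are sound and consistent with the paper: the reduction via the automorphism $\varphi$ to $S_1$ and $S_4$, and the observation that for $\lambda=0$ the element $\phi\beta=-\psi\sigma$ already lies in $\phi\Lambda\cap\psi\Lambda$, so that $\Omega^2(S_1)$ needs at least three minimal generators, is exactly the remark made in step (1b) of the proof of Proposition~\ref{prop:6.1}. But, as you concede, this only excludes period $4$ (and trivially $1$ and $2$), so the entire burden falls on your covering argument, and that argument has a genuine gap. From $\Omega^n(S_i)\cong S_i$ you can legitimately deduce (via the push-down for the free $\mathbb{Z}$-action and Happel's equivalence) an isomorphism $\mathbb{S}^k\widetilde{S}_i\cong\widetilde{S}_i[-mn]$ \emph{for the single object} $\widetilde{S}_i$. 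The sentence ``as the simples generate $D^b(\mod B(m))$, this would make $B(m)$ fractionally Calabi--Yau'' does not follow: a fractional Calabi--Yau property of one object says nothing about the functor $\mathbb{S}^k$ on the other simples, and the hypothesis to be refuted is only that \emph{some} simple is periodic. You cannot invoke Theorem~\ref{th:main3}(i)$\Rightarrow$(ii) to spread periodicity to all simples, since that equivalence is precisely what Proposition~\ref{prop:6.5} is needed to establish. Moreover, even granting a uniform Calabi--Yau ratio, the implication ``fractionally Calabi--Yau $\Rightarrow$ iterated tilted of Dynkin type'' is a theorem for hereditary algebras but is not available for arbitrary algebras of finite global dimension; and your assertion that $B(m)$ is representation-infinite ``by Proposition~\ref{prop:5.6}'' is unsupported --- that proposition concerns a quotient $\Gamma$ of $\Lambda$ by four arrows, not $B(m)$ (and Proposition~\ref{prop:5.7}, on which the whole construction rests, is itself stated without proof).

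The paper's own argument is entirely different and much shorter, and you should be aware of it as the standard obstruction in this situation. For $\lambda=0$ one checks from the relations that $\rad P_i/\soc P_i\cong\rad P_{i+1}/\soc P_{i+1}$ for $i\in\{1,3,5\}$ (for instance $\delta\eta=\nu\omega$ in $P_1$ matches $\varepsilon\eta=\varrho\omega$ in $P_2$ --- this is exactly the isomorphism destroyed by the deformation term when $\lambda\neq 0$). Two non-isomorphic indecomposable projectives with isomorphic hearts force, by general Auslander--Reiten theory for self-injective algebras, the modules $P_i/\soc P_i$ and $P_{i+1}/\soc P_{i+1}$ to lie outside every stable tube; since $\Lambda$ is symmetric, $\tau=\Omega^2$ and $\Omega(P_i/\soc P_i)\cong\soc P_i\cong S_i$, so periodicity of $S_i$ would place $P_i/\soc P_i$ in a stable tube. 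This settles all periods at once without any covering theory. If you want to salvage your own route, the covering-free alternative you sketch at the end (propagating the extra-generator phenomenon through iterated syzygies) is closer in spirit to what can actually be controlled, but the AR-theoretic obstruction above is the efficient tool here.
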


\begin{proof}
Take $i \in \{1,3,5\}$.
Observe that, for the indecomposable projective $A$-modules
$P_i = e_i A$ and $P_{i+1} = e_{i+1} A$, we have
$\rad P_i / \soc P_i \cong \rad P_{i+1} / \soc P_{i+1}$ 
in $\mod A$.
Then, by general theory, $P_i / \soc P_i$
and $P_{i+1} / \soc P_{i+1}$ 
are not in stable tubes of the stable Auslander-Reiten quiver
$\Gamma_A^s$ of $A$.
Since $A$ is a symmetric algebra, we conclude that $S_i$ 
and $S_{i+1}$ are  not periodic modules.
\end{proof}

\section*{Acknowledgements}

The research was done during the visit of
the first named author at the Faculty of Mathematics and Computer Sciences
in Toru\'n (June 2017).

\end{document}